\author{Cristina Acciarri}
\address{(Cristina Acciarri) via Francesco Crispi, n.81 San Benedetto del Tronto (AP) Italy}
\email{acciarricristina@yahoo.it}
\author{Pavel Shumyatsky} 
\address{(Pavel Shumyatsky) Department of Mathematics, University of Brasilia,
Brasilia-DF, 70910-900 Brazil}
\email{pavel@unb.br}
\thanks{The first author was supported by the Spanish Government, grant
MTM2008-06680-C02-02, partly with FEDER funds. She also thanks the Department of Mathematics of the University of Brasilia where this research was partly conducted. The second author was supported by CAPES and CNPq-Brazil}
\keywords{automorphisms, centralizers, associated Lie rings, finite groups}
\numberwithin{equation}{section}
\keywords{automorphisms, centralizers, derived groups, associated Lie rings, finite groups}
\title{Fixed points of coprime operator groups}
\newtheorem{theorem}{\sc Theorem}[section]
\newtheorem{lemma}[theorem]{\sc Lemma}
\newtheorem{proposition}[theorem]{\sc Proposition}
\newtheorem{corollary}[theorem]{\sc Corollary}
\newtheorem{conjecture}[theorem]{\sc Conjecture}
\newcommand{\F}{\mathbb{F}}
\theoremstyle{definition}
\newtheorem{definition}[theorem]{Definition}
\begin{document}

\begin{abstract} Let  $m$ be a positive integer and $A$  an elementary abelian group of order $q^r$ with $r\geq 2$ acting on a finite $q'$-group $G$.  We show that if for some integer $d$ such that $2^{d}\leq r-1$ the $d$th derived group of $C_{G}(a)$  has exponent dividing $m$ for any $a \in  A^{\#}$, then $G^{(d)}$ has $\{m,q,r\}$-bounded exponent and if $\gamma_{r-1}(C_G(a))$ has exponent dividing $m$ for any $a\in A^\#$, then $\gamma_{r-1}(G)$ has $\{m,q,r\}$-bounded exponent.
\end{abstract}

\maketitle

   
\section{Introduction}
\label{introduction}
Let $A$ be a  finite group acting coprimely on a finite
group $G$. It is well known that the structure of
the centralizer $C_G(A)$ (the fixed-point subgroup) of
$A$ has strong influence over the structure of $G$.
To exemplify this we mention the following results.

The celebrated theorem of Thompson \cite{T} says that
if $A$ is of prime order and $C_G(A)=1$, then $G$ is
nilpotent. On the other hand, any nilpotent
group admitting a fixed-point-free automorphism of prime
order $q$ has nilpotency class bounded by some
function $h(q)$ depending on $q$ alone. This result
is due to Higman \cite{Higman}. The reader can find in
\cite{Khu1} and \cite{Khu2} an account on the more recent
developments related to these results.
The next result is a consequence of the classification
of finite simple groups \cite{Wang}: If $A$ is a group
of automorphisms of $G$ whose order is coprime to that
of $G$ and $C_G(A)$ is nilpotent or has odd order, 
then $G$ is soluble. Once the group $G$ is known to be soluble, there is
a wealth of results bounding the Fitting height of $G$ in terms of the order of $A$ and the Fitting height of $C_G(A)$. This direction of research was started by Thompson in \cite{thompson2}. The proofs mostly use representation theory in the spirit of the Hall-Higman work \cite{HH}. A general discussion of these methods and their use in numerous fixed-point theorems can be found in Turull \cite{Tu}.

Following the solution of the restricted Burnside problem it was discovered that the exponent of $C_G(A)$ may have strong impact over the exponent of $G$. Remind that a group $G$ is said to have exponent $m$ if $x^m=1$ for every $x\in G$ and $m$ is the minimal positive integer with this property. The next theorem was obtained in \cite{KS}.

\begin{theorem}
\label{q2}
Let $q$ be a prime, m a positive integer and $A$ an elementary abelian group of order $q^{2}$. Suppose that $A$ acts as a coprime group of automorphisms on a finite group $G$ and assume that $C_{G}(a)$ has exponent  dividing $m$ for each $a\in A^{\#}$. Then the exponent of $G$ is $\{m,q\}$-bounded.
\end{theorem}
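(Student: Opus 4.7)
The plan is to reduce to a finite $p$-group with $p\neq q$ and then transfer the problem to the associated Lie ring, where Zelmanov's machinery applies. Since $|A|$ and $|G|$ are coprime, $G$ is a product of its $A$-invariant Sylow subgroups $G_p$, and each $C_{G_p}(a)$ is a Sylow $p$-subgroup of $C_G(a)$ and therefore still has exponent dividing $m$. It is thus enough to bound $\exp(G_p)$ by a function of $m$ and $q$ alone, independently of $p$; from now on I assume $G$ is a finite $p$-group with $p\neq q$.

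I then pass to the Lie ring $L=L(G)=\bigoplus_{i\ge 1}D_i/D_{i+1}$ over $\F_p$ associated with the Jennings--Lazard--Zassenhaus series $(D_i)$. This is a restricted Lie algebra on which $A$ acts coprimely, and so for every $A$-invariant subalgebra $M$ the standard decomposition $M=\sum_{a\in A^{\#}}C_M(a)$ holds; in particular $L$ is generated by the union of the fixed-point subalgebras $C_L(a)$, $a\in A^{\#}$. The hypothesis $\exp(C_G(a))\mid m$, combined with the identity $x^{p^s}\mapsto (\operatorname{ad} x)^{p^s}$ in the restricted setting, forces every element of $C_L(a)$ to be ad-nilpotent of index bounded in terms of $m$, and by Zelmanov's theorem on Lie algebras of groups of finite exponent it also forces $C_L(a)$ to satisfy a nontrivial polynomial identity whose degree depends only on $m$.

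The heart of the argument is to upgrade the PI property from each centralizer $C_L(a)$ to the whole of $L$. This is done by a Bahturin--Linchenko--Zaicev style result: if a Lie algebra in characteristic coprime to $|A|$ is the sum of PI subalgebras indexed by $A^{\#}$, then it is itself PI, with parameters controlled by those of the summands and by $|A|$. Once $L$ is known to be PI, finitely generated (by an $A$-orbit of a finite generating set of $G$), and generated by ad-nilpotent elements of bounded index, Zelmanov's nilpotency theorem guarantees that $L$ is nilpotent of $\{m,q\}$-bounded class; by Lazard's correspondence, so is $G$. Since $G=\langle C_G(a):a\in A^{\#}\rangle$ is generated by subgroups of exponent dividing $m$, the positive solution of the Restricted Burnside Problem then yields that $\exp(G)$ is $\{m,q\}$-bounded.

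The step I expect to be the main obstacle is propagating the polynomial identity from the centralizer subalgebras $C_L(a)$ to the whole of $L$. The ad-nilpotency of elements of $C_L(a)$ is a fairly direct translation of the exponent hypothesis, but the PI-ness of $L$ itself depends crucially on the fact that $A$ is noncyclic of rank $2$ and on representation-theoretic arguments for $A$-graded Lie algebras; this is precisely where the assumption $|A|=q^{2}$ (rather than $|A|=q$) is used in an essential way, and it is the ingredient that allows Zelmanov's nilpotency criterion to be invoked globally rather than only inside each $C_L(a)$.
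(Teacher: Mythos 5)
Your scaffolding — reduce to an $A$-invariant Sylow $p$-subgroup, pass to the Lie algebra of the Jennings--Lazard--Zassenhaus series, combine ad-nilpotency with a polynomial identity, and invoke Zelmanov — is indeed the strategy of \cite{KS} (the paper itself does not prove Theorem \ref{q2}; it quotes it, but the same machinery is carried out in Lemma \ref{pgroupth}). However, two of your steps are unjustified, and they are exactly where the real work lies. First, Lazard's lemma (Lemma \ref{Laz}) gives ad-nilpotency of bounded index only for \emph{homogeneous} elements of $C_L(a)$, i.e.\ images of elements of $C_G(a)$; a general element of $C_L(a)$ is a sum of such, and sums of ad-nilpotent elements need not be ad-nilpotent of bounded index, so your claim that ``every element of $C_L(a)$ is ad-nilpotent of index bounded in terms of $m$'' does not follow. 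Moreover Theorem \ref{liealgbnilp} requires ad-nilpotency of \emph{all commutators in the generators}, not merely of the generators. The known proofs handle both points by extending scalars by a primitive $q$th root of unity, choosing generators that are common eigenvectors for $A$, noting that every commutator in eigenvectors is again an eigenvector and hence lies in $C_{\overline L}(a)$ for some $a\in A^{\#}$ (because every linear character of the noncyclic group $A$ has nontrivial kernel --- this, together with the covering of $P$ by the centralizers $C_P(A_i)$, is where $|A|=q^{2}$ is really used), and then proving ad-nilpotency of such elements via a separate lemma of the type of Lemma \ref{L(GH)}, applied to their at most $q-1$ homogeneous components. Your diagnosis that rank $2$ enters through the PI-transfer is off: Theorem \ref{LichBak} needs only that $C_L(A)$, the fixed points of the \emph{whole} of $A$, is PI, and holds for any finite $A$ of coprime order; your ``sum of PI subalgebras indexed by $A^{\#}$'' statement is not that theorem and is not needed.

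Second, the endgame fails as written. Nilpotency of $L_p(G)$ of bounded class does \emph{not} imply that $G$ is nilpotent of bounded class; there is no such ``Lazard correspondence'' for finite $p$-groups. What one actually gets is Lemma \ref{PowerfulR}: a $d$-generator $p$-group whose algebra $L_p$ is nilpotent of class $c$ has a powerful characteristic subgroup of $\{p,c,d\}$-bounded index, after which the Lubotzky--Mann theory finishes (a powerful $p$-group generated by elements of order dividing $m$ has exponent dividing $m$, and the dependence on $p$ is harmless since $p\mid m$). Because both Zelmanov's class bound and Lemma \ref{PowerfulR} depend on the number of generators, you cannot run the argument on $G$ itself or on ``an $A$-orbit of a finite generating set of $G$'' --- that number is unbounded in terms of $m$ and $q$. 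One must fix a single element $x$, write it as a product of boundedly many elements of the centralizers (using $P=\prod_i C_P(A_i)$, valid since $P$ is nilpotent and $A$ noncyclic), and bound the exponent of the subgroup $Y$ generated by the $A$-orbits of these boundedly many elements. Finally, the closing appeal to the restricted Burnside problem is misplaced: RBP bounds the \emph{order} of finitely generated groups of \emph{given} exponent, whereas bounded exponent is precisely what you are trying to prove; the powerful-subgroup argument is what replaces it. (The initial phrase ``$G$ is a product of its $A$-invariant Sylow subgroups'' is also false for non-nilpotent $G$, though the intended reduction --- bound the exponent of each $A$-invariant Sylow subgroup --- is fine.)
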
  
Here and throughout the paper $A^{\#}$ denotes the set of nontrivial elements of $A$. The proof of  the above result involves a number of deep ideas. In
particular, Zelmanov's techniques that led to the solution of the restricted Burnside problem \cite{Z1} are combined with the Lubotzky--Mann theory of powerful $p$-groups \cite{LM}, Lazard's criterion for a pro-$p$~group to be $p$-adic analytic \cite{L}, and a theorem of Bakhturin and Zaicev on Lie algebras admitting a group of automorphisms whose fixed-point subalgebra is PI \cite{BZ}.

Another quantitative result of similar nature  was proved in the paper of Guralnick and the second author \cite{GS}.

\begin{theorem}
\label{q3}
Let $q$ be a prime, $m$  a positive integer. Let $G$ be a finite $q'$-group acted on by an elementary abelian group $A$ of order $q^{3}$. Assume that $C_{G}(a)$ has derived group of exponent dividing $m$ for each $a\in A^{\#}$. Then the exponent of $G'$ is $\{m,q\}$-bounded. 
\end{theorem}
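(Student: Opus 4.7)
My plan is to follow the overall strategy of the proof of Theorem \ref{q2}, exploiting the fact that $|A|=q^3$ is one rank larger than in that theorem in order to trade the exponent hypothesis on $C_G(a)$ for the weaker hypothesis on $C_G(a)'$. The first step is the standard coprime-action reduction: using Wang's classification-based theorem \cite{Wang}, Hall--Higman--Turull style Fitting-height bounds, and a Sylow-by-Sylow analysis of nilpotent groups (yielding constants uniform in $p$), I reduce to the case where $G$ is a finite $p$-group for some prime $p \neq q$.

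Next, pass to the associated Lie ring $L = L(G)$ via the Jennings--Zassenhaus filtration; the group $A$ acts on $L$ by Lie automorphisms. Extending scalars to a field $\F$ of characteristic $p$ containing a primitive $q$th root of unity produces a weight decomposition $\tilde L = \bigoplus_{\chi \in \hat A} \tilde L_\chi$ with $C_{\tilde L}(a) = \bigoplus_{\chi(a)=1} \tilde L_\chi$ for each $a \in A^\#$. Using Zelmanov's techniques \cite{Z1} together with the Lubotzky--Mann theory of powerful $p$-groups \cite{LM} and Lazard's criterion \cite{L}, the hypothesis that $C_G(a)'$ has exponent dividing $m$ is translated into a suitable $\{m,q\}$-bounded nilpotency or polynomial-identity statement about $[C_{\tilde L}(a), C_{\tilde L}(a)]$, for every $a \in A^\#$.

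The decisive combinatorial input, specific to $|A|=q^3$, is the following: for any two nontrivial characters $\chi, \psi \in \hat A$ the subgroup $\ker\chi \cap \ker\psi$ of $A$ has index at most $q^2 < |A|$, hence contains some $a \in A^\#$; then $\tilde L_\chi + \tilde L_\psi \subseteq C_{\tilde L}(a)$. Brackets involving the trivial weight component $\tilde L_1 = C_{\tilde L}(A)$ are handled using $\tilde L_1 \subseteq C_{\tilde L}(a)$ for every $a$. Patching these local identities across the collection of fixed-point subrings $\{C_{\tilde L}(a) : a \in A^\#\}$ and applying the Bakhturin--Zaicev type transfer \cite{BZ} yields an $\{m,q\}$-bounded identity on the full derived ring $[\tilde L, \tilde L]$, which descends to $[L,L]$; the Lazard--Lubotzky--Mann dictionary run backwards then gives the required $\{m,q\}$-bounded exponent for $G'$.

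I expect the main obstacle to be the two passages between the group and Lie settings: translating ``$C_G(a)'$ has exponent dividing $m$'' into the right Lie-theoretic identity on $[C_{\tilde L}(a), C_{\tilde L}(a)]$, and, after the global identity on $[\tilde L, \tilde L]$ has been established, descending from it to the desired bound on $G'$. These are precisely the steps where Zelmanov's results, the powerful $p$-group theory, and Lazard's criterion must be invoked with care so that the final bound is independent of the prime $p$.
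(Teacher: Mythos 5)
Your Lie-theoretic intuition contains the right germ (the observation that for $|A|=q^3$ any two nontrivial characters $\chi,\psi$ satisfy $\ker\chi\cap\ker\psi\neq 1$, so a bracket of two weight components lands in $[C_{\tilde L}(a),C_{\tilde L}(a)]$ for a single $a\in A^\#$, is exactly the combinatorial mechanism behind Proposition~\ref{PAspecial}(4) with $d=1$), but the proposal has a genuine gap at its first step. The ``standard coprime-action reduction'' to a $p$-group does not exist in the form you describe: Wang's theorem requires $C_G(A)$ nilpotent or of odd order, which is not among your hypotheses, and Hall--Higman/Turull-type results bound Fitting height, which says nothing about exponents and does not reduce an exponent problem to the nilpotent case. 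The actual content of the classification-dependent part of the proof is a generation theorem at the level of the group: one must show that an $A$-invariant Sylow $p$-subgroup $P$ of $G'$ is generated by (indeed is the product of boundedly many of) its intersections with subgroups of the form $[C_G(A_i),C_G(A_j)]\cap C_G(A_l)$, each of which lies inside $C_G(a)'$ for some $a\in A^\#$ precisely because $q^2<q^3$. Proving this for arbitrary (possibly nonsoluble) $G$ forces a minimal-counterexample analysis whose hard case is a product of nonabelian simple groups permuted by $A$, and there CFSG enters through the fact that a coprime automorphism group of a simple group is cyclic. Nothing in your outline produces this statement, and without it the hypothesis on $C_G(a)'$ gives no grip on $G'$ outside the Lie ring.

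The endgame is also misconceived as stated: a polynomial identity on $[\tilde L,\tilde L]$ cannot be ``run backwards'' to an exponent bound on $G'$. What the argument actually needs, for each element $x\in G'$ (after the reduction to $p$-groups), is a finitely generated $A$-invariant subgroup $Y\ni x$ which is a product of boundedly many pieces contained in subgroups $C_G(a)'$; then $L_p(Y)$ is generated by boundedly many elements, every commutator in suitable eigenvector generators is ad-nilpotent of $\{m,q\}$-bounded index by Lazard's lemma (here your weight-space remark is used), $C_{L_p(Y)}(a)$ is PI by Wilson--Zelmanov since $C_Y(a)$ satisfies $[x_1,x_2]^m\equiv 1$, Linchenko/Bakhturin--Zaicev makes $L_p(Y)$ PI, and Zelmanov's theorem gives nilpotency of bounded class; Lazard's criterion then yields a powerful subgroup of bounded index, and the exponent bound follows because the relevant subgroups are generated by elements of order dividing $m$ --- a fact that again rests on the group-level generation/product statement above, applied at each derived level. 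So the two obstacles you flag at the end are not technical refinements to be ``invoked with care''; the first of them is the theorem whose proof (via the $A$-special subgroup machinery and CFSG) constitutes the bulk of the paper's argument.
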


Note that the assumption that $|A|=q^{3}$ is essential here and the theorem fails if $|A|=q^{2}$. The proof of Theorem \ref{q3} depends on the classification of finite simple groups.

It was natural to expect that Theorems \ref{q2} and \ref{q3} admit a common generalization that would show that both theorems are part of a more general phenomenon. Let us denote by $\gamma_i(H)$ the $i$th term of the lower central series of a group $H$ and by $H^{(i)}$ the $i$th term of the derived series of $H$. The following conjecture was made in \cite{drei}.

\begin{conjecture}
\label{255} 
Let $q$ be a prime,  $m$ a positive integer and  $A$ an elementary
abelian group of order $q^r$ with $r\ge 2$ acting 
on a finite $q'$-group $G$.\begin{enumerate}
\item If $\gamma_{r-1}(C_G(a))$ has exponent dividing
$m$ for any $a\in A^\#$, then $\gamma_{r-1}(G)$
has $\{m,q,r\}$-bounded exponent;
\item If, for some integer $d$ such that $2^d\le r-1$,
the $d$th derived group of $C_G(a)$ has exponent dividing
$m$ for any $a\in A^\#$, then the $d$th derived
group $G^{(d)}$ has $\{m,q,r\}$-bounded exponent.
\end{enumerate}
\end{conjecture}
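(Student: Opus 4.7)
My plan is to follow the strategy that underlies Theorems~\ref{q2} and~\ref{q3}, combining a coprime-action reduction to $p$-groups with Lie-theoretic methods in the spirit of Zelmanov.

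First I would reduce to the case where $G$ is a finite $p$-group with $p\neq q$. Standard coprime-action results give, for each prime $p$ dividing $|G|$, an $A$-invariant Sylow $p$-subgroup $P$ to which the hypothesis passes since $C_P(a)\subseteq C_G(a)$. Using a Fitting-height bound (via \cite{Tu}) together with the identity $C_{G/N}(A)=C_G(A)N/N$ on $A$-invariant normal sections, a uniform exponent bound on $\gamma_{r-1}(P)$ (or $P^{(d)}$) across primes $p$ will assemble into the required $\{m,q,r\}$-bound on $\gamma_{r-1}(G)$ (or $G^{(d)}$).

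Next I would pass to the associated graded Lie ring $L=\bigoplus_{i\ge 1}\gamma_i(P)/\gamma_{i+1}(P)$. The group $A$ acts on $L$ coprimely, and a standard lemma of Khukhro identifies $L(C_P(a))$ with the fixed subring $L^{\langle a\rangle}$ for each $a\in A^{\#}$. The hypothesis that $\gamma_{r-1}(C_P(a))$ or $C_P(a)^{(d)}$ has exponent dividing $m$ translates, via Zelmanov's power-to-Engel machinery from \cite{Z1}, into a Lie-theoretic identity satisfied by the corresponding subring of $L^{\langle a\rangle}$.

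The core of the argument is to propagate these identities from the fixed subrings to the whole of $L$. For part~(1) I would use the decomposition $H=\langle C_H(B):B<A\ \text{maximal}\rangle$ available for any $A$-invariant $H$, combined with multilinear commutator calculations, to push the identity through the $r-1$ layers of the lower central series. For part~(2) I would induct on $d$: at $d=0$ (where $r\ge 2$) Theorem~\ref{q2} applied to a rank-$2$ subgroup of $A$ gives the result directly, and at $d=1$ (where $r\ge 3$) Theorem~\ref{q3} does likewise. For the inductive step, the condition $2^d\le r-1$ is what permits an iterated Bakhturin--Zaicev argument \cite{BZ}: each passage $H^{(d-1)}\to H^{(d)}$ doubles the bracket depth and so consumes one additional ``rank'' of $A$, so after $d$ steps one has used $2^d$ of the $r-1$ available ranks, leaving just enough room to close the induction.

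Finally, Zelmanov's theorem \cite{Z1} forces the relevant subring of $L$ to be nilpotent of $\{m,q,r\}$-bounded class, and Lazard's correspondence \cite{L} together with the Lubotzky--Mann theory of powerful $p$-groups \cite{LM} converts this back into the desired exponent bound on $P^{(d)}$ or $\gamma_{r-1}(P)$. The main obstacle I expect is the propagation step: arranging the induction and the iterated use of Bakhturin--Zaicev so that a PI is obtained not merely on $L$ but on the specific subring $L^{(d)}$ or $\gamma_{r-1}(L)$, with the numerical conditions $r\ge 2$ and $2^d\le r-1$ emerging naturally from the combinatorics of how brackets in the derived and lower central series consume ranks of $A$.
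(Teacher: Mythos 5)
Your Lie-theoretic core (Lazard's lemma, Wilson--Zelmanov, Linchenko/Bakhturin--Zaicev, Zelmanov's nilpotency theorem, Lubotzky--Mann) does match the machinery of Section \ref{Lie_machinery} in outline, but the proposal has a genuine gap at the very first step, the reduction to $p$-groups, and this is precisely where the main work of the paper lies. The exponent of $G^{(d)}$ is governed by the Sylow subgroups of $G^{(d)}$, that is, by the subgroups $P\cap G^{(d)}$ with $P$ an $A$-invariant Sylow $p$-subgroup of $G$; for these the hypothesis does not ``pass'' in the way you claim. From $C_P(a)\subseteq C_G(a)$ you only control $C_P(a)^{(d)}$ and hence, at best, the exponent of $P^{(d)}$, which is in general far smaller than $P\cap G^{(d)}$: for instance $G^{(d)}$ may coincide with the full Sylow $p$-subgroup of $G$ while $P^{(d)}$ is trivial. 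The hypothesis on $C_G(a)^{(d)}$ says nothing direct about $C_{G^{(d)}}(a)$, which can be much larger than $C_G(a)^{(d)}$, and the same problem occurs for $\gamma_{r-1}$ in part (1). Bridging exactly this gap is the paper's main novelty: one introduces the $A$-special subgroups of degree $d$ (Definition \ref{DAspecial}), proves via the classification (Lemma \ref{simple}, the analysis of direct products of simple groups in Lemma \ref{product_of_simple}, and a minimal-counterexample argument using Gasch\"utz's theorem) that an $A$-invariant Sylow $p$-subgroup of $G^{(d)}$ is the product of its intersections with the $A$-special subgroups of degree $d$ (Theorem \ref{generation1} and Corollary \ref{generation3}), and shows that each such subgroup lies inside $C_G(a)^{(d)}$ for some $a\in A^{\#}$ when $2^d\le r-1$ (Proposition \ref{PAspecial}(4)). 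Your alternative route through a Fitting-height bound via \cite{Tu} is not available: $G$ is not assumed soluble and nothing in the hypotheses forces solubility, and even for soluble $G$ a Fitting-height bound would not convert exponent bounds on $P^{(d)}$ into exponent bounds on the Sylow subgroups of $G^{(d)}$.

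A second, smaller misreading concerns where $2^d\le r-1$ and Bakhturin--Zaicev enter. In the actual proof Bakhturin--Zaicev (through Corollary \ref{polynomialidentity}) is applied exactly once, to transfer a single polynomial identity from $C_{\overline L}(a)$ to $\overline L$; it is not iterated and does not ``consume ranks'' of $A$. The numerical condition arises combinatorially from the special subgroups: a degree-$d$ subgroup is built from commutators of two degree-$(d-1)$ subgroups, so the controlling subgroup $B\le A$ has index at most $q^{2^{d-1}}\cdot q^{2^{d-1}}=q^{2^d}$, and $2^d\le r-1$ guarantees $B\neq 1$, hence containment in some $C_G(a)^{(d)}$ where the exponent hypothesis applies. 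Your flagged obstacle (getting a PI specifically on $L^{(d)}$ or $\gamma_{r-1}(L)$) is dissolved in the paper by a change of object: one does not work with the Lie algebra of $G$, but with $L_p(Y)$ for a subgroup $Y$ generated by the $A$-orbits of boundedly many elements of special subgroups inside the Sylow subgroup of $G^{(d)}$, so the PI and the bounded ad-nilpotency of commutators of eigenvector generators are needed on that whole algebra, followed by Lemma \ref{PowerfulR} and the powerful case. As written, your reduction step fails and your inductive propagation step has no concrete mechanism, so the argument does not go through without importing the $A$-special subgroup machinery or something equivalent to it.
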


The main purpose of the present paper is to confirm Conjecture \ref{255}. Theorem \ref{PR} and Theorem \ref{gamma_PR} show that both parts of the conjecture are correct. The main novelty of the paper is the introduction of the concept of $A$-special subgroups of $G$ (see Section 3). Using the classification of finite simple groups it is shown in Section 4 that the $A$-invariant Sylow $p$-subgroups of $G^{(d)}$ are generated by their intersections with $A$-special subgroups of degree $d$. This enables us to reduce the proof of Conjecture \ref{255} to the case where $G$ is a $p$-group, which can be treated via Lie methods. The idea of this kind of reduction has been anticipated already in \cite{GS}. In Section 6 we give a detailed proof of part (2) of Conjecture \ref{255}. In Section 7 we briefly describe how the developed techniques can be used to prove part (1) of Conjecture \ref{255}.

Throughout the article we use the term ``$\{a,b,c,\dots\}$-bounded" to mean ``bounded from above
by some function depending only on the parameters $a,b,c,\dots$".

\section{Preliminary Results}
\label{preliminary result}

We start with the following elementary lemma.
\begin{lemma}
\label{generation_and_product}
Suppose that a nilpotent group $G$ is generated by subgroups $G_{1},\ldots,G_{t}$ such that 
$\gamma_{i}(G)=\langle \gamma_{i}(G)\cap G_{j} \mid 1 \leq j \leq t \rangle$ for all $i\geq 1$. Then $G=G_{1}G_{2}\cdots G_{t}$.   
\end{lemma}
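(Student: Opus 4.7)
The plan is to proceed by induction on the nilpotency class $c$ of $G$. The base case $c=1$ is immediate: $G$ is abelian, so the subgroups $G_1,\dots,G_t$ commute pairwise and, being generators, satisfy $G=G_1G_2\cdots G_t$.

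For the inductive step I would pass to the quotient $\bar G=G/\gamma_c(G)$, which has class at most $c-1$, and set $\bar G_j=G_j\gamma_c(G)/\gamma_c(G)$. The first task is to check that the hypothesis on lower central terms is inherited. Since $\gamma_i(\bar G)=\gamma_i(G)\gamma_c(G)/\gamma_c(G)$, the images of the generators $\gamma_i(G)\cap G_j$ lie in $\gamma_i(\bar G)\cap \bar G_j$, which forces $\gamma_i(\bar G)=\langle \gamma_i(\bar G)\cap \bar G_j\mid 1\le j\le t\rangle$. By the inductive hypothesis, $\bar G=\bar G_1\bar G_2\cdots\bar G_t$, so every $g\in G$ can be written as
\[
g = g_1g_2\cdots g_t\, z,\qquad g_j\in G_j,\ z\in\gamma_c(G).
\]

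The decisive point is that $\gamma_{c+1}(G)=1$ implies $\gamma_c(G)\le Z(G)$. Applying the hypothesis with $i=c$ gives $\gamma_c(G)=\langle \gamma_c(G)\cap G_j\mid 1\le j\le t\rangle$, and since $\gamma_c(G)$ is abelian this generation collapses to a product: $\gamma_c(G)=\prod_{j=1}^{t}(\gamma_c(G)\cap G_j)$. Hence $z=z_1z_2\cdots z_t$ with $z_j\in\gamma_c(G)\cap G_j$. Because each $z_j$ is central, I can slide it next to $g_j$ to obtain
\[
g = (g_1z_1)(g_2z_2)\cdots(g_tz_t),
\]
and $g_jz_j\in G_j$ for every $j$, which gives $G=G_1G_2\cdots G_t$ as required.

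I expect no serious obstacle. The only delicate point is the bookkeeping that shows the hypothesis descends to $\bar G$; everything else is driven by the centrality of $\gamma_c(G)$, which is exactly what permits the residual factor $z$ to be distributed among the $G_j$.
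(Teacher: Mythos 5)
Your proof is correct and follows essentially the same route as the paper: induction on the nilpotency class, reduction modulo $K=\gamma_c(G)$, the decomposition $K=\prod_j\bigl(K\cap G_j\bigr)$ coming from the hypothesis at $i=c$ together with abelianness of $K$, and absorption of the central factors into the $G_j$. You are in fact slightly more explicit than the paper about verifying that the hypothesis descends to the quotient $G/\gamma_c(G)$, which the paper's terse "by induction" step leaves implicit.
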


\begin{proof}
We argue by induction on the nilpotency class $c$ of $G$. If $c=1$, then $G$ is abelian and the result is clear. Assume that $c\geq 2$. Let $K=\gamma_{c}(G)$. Since $K$ is central, it is abelian and we have 
$K=K_{1}K_{2}\cdots K_{t}$,
where 
$K_{j}=K\cap G_{j}$ for $j=1,\ldots,t$. 
By induction we have
 $$
 G=G_{1}G_{2}\cdots G_{t}K=G_{1}G_{2}\cdots G_{t}K_{1}K_{2}\cdots K_{t}.
 $$ 
Since each subgroup  $K_{j}$ is central in $G$ and $K_j\leq G_{j}$, it follows that $G=G_{1}G_{2}\cdots G_{t}$, as required.  
\end{proof}

We now collect some facts  about coprime automorphisms of finite groups.
The two following   lemmas  are well known (see  \cite[5.3.16, 6.2.2, 6.2.4]{GO}).

\begin{lemma}
\label{FG1} 
Let $A$ be a  group of automorphisms of the finite group $G$ with $(|A|,|G|)=1$. 
\begin{enumerate}
\item If $N$ is an  $A$-invariant normal  subgroup of $G$, then \\$C_{G/N}(A)=C_G(A)N/N$;
\item If $H$ is any $A$-invariant $p$-subgroup of $G$, then $H$ is contained in an $A$-invariant Sylow $p$-subgroup of $G$.
\end{enumerate}
\end{lemma}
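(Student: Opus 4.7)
I would handle the two parts of Lemma~\ref{FG1} separately; each reduces to standard coprime-action machinery, so the main question is how cleanly to organize the reduction.

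For part (1), the inclusion $C_G(A)N/N\subseteq C_{G/N}(A)$ is immediate, so the content is the reverse inclusion. Fix $gN\in C_{G/N}(A)$, so that $[g,a]\in N$ for every $a\in A$. Working inside the semidirect product $\Gamma=G\rtimes A$, the condition $[g,A]\subseteq N$ says precisely that $A^{g}\subseteq NA$, while $A^{g}\cap N\subseteq A^{g}\cap G=1$ in the semidirect product. Thus both $A$ and $A^{g}$ are complements to $N$ in the subgroup $NA$, and since $(|A|,|N|)=1$ the Schur--Zassenhaus theorem furnishes $n\in N$ with $A^{g}=A^{n}$. Then $gn^{-1}$ normalizes $A$ in $\Gamma$; since $gn^{-1}\in G$ and $G\cap A=1$, a short commutator calculation (writing $(gn^{-1})^{-1}a(gn^{-1})=a\cdot[gn^{-1},a^{-1}]$ and noting the commutator lies in $G\cap A=1$) shows that $gn^{-1}$ in fact centralizes $A$. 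Hence $gN=(gn^{-1})N\in C_{G}(A)N/N$, as required.

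For part (2), I would reduce to the classical existence of an $A$-invariant Sylow $p$-subgroup of any $A$-invariant finite group of order coprime to $|A|$, which I would take as a black box. Put $N=N_{G}(H)$; this subgroup is $A$-invariant because $H$ is. Since $H$ is a normal $p$-subgroup of $N$, it is contained in every Sylow $p$-subgroup of $N$, and each such Sylow $p$-subgroup of $N$ is in fact a Sylow $p$-subgroup of the full group $G$ (its $p$-part cannot grow in $G$, since otherwise $H$ would be properly contained in a larger $p$-subgroup normalizing $H$, contradicting that we started at a Sylow of $N$). Applying the coprime existence result inside $N$ produces an $A$-invariant Sylow $p$-subgroup $P$ of $N$; this $P$ is the desired $A$-invariant Sylow $p$-subgroup of $G$ containing $H$.

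The main obstacle is really just invoking Schur--Zassenhaus in part~(1) and the existence of an $A$-invariant Sylow $p$-subgroup in part~(2). Both can be proved from scratch by induction on the relevant order, reducing to the case in which the appropriate normal subgroup is elementary abelian and then using the vanishing of $H^{1}(A,N)$ for coprime orders to produce a fixed element (respectively, a fixed Sylow subgroup); but since the authors quote Gorenstein, I would be content to do the same rather than reproducing these classical arguments.
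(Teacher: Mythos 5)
Your part (1) is essentially the standard textbook argument (the paper itself offers no proof, just a citation to Gorenstein): working in $G\rtimes A$, observing that $A$ and $A^{g}$ are complements to $N$ in $NA$, invoking the conjugacy part of Schur--Zassenhaus to find $n\in N$ with $A^{g}=A^{n}$, and then using $G\cap A=1$ to see that $gn^{-1}$ centralizes $A$. This is correct; the displayed commutator identity is slightly off as written (one gets $h^{-1}ah=a[a,h]$ with $[a,h]\in G\cap A=1$), but that is cosmetic.

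Part (2), however, contains a genuine gap: the claim that a Sylow $p$-subgroup of $N=N_{G}(H)$ is automatically a Sylow $p$-subgroup of $G$ is false. Take $G=S_{4}$, $p=2$, $H=\langle (12)\rangle$ (with $A=1$, which satisfies the coprimality hypothesis): then $N_{G}(H)=\langle (12),(34)\rangle$ has order $4$, while the Sylow $2$-subgroups of $G$ have order $8$. Your parenthetical justification fails precisely because, when a Sylow $p$-subgroup $P$ of $N$ is not Sylow in $G$, the larger $p$-subgroup one obtains (e.g.\ $N_{Q}(P)>P$ inside a Sylow $p$-subgroup $Q$ of $G$ containing $P$) normalizes $P$ but need not normalize $H$; in the example, the dihedral group of order $8$ containing $P$ does not normalize $\langle (12)\rangle$. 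The standard repair is an induction you omitted: if $H$ is not already Sylow in $G$, then every Sylow $p$-subgroup of $N_{G}(H)$ properly contains $H$ (the usual normalizer-growth argument inside a Sylow $p$-subgroup of $G$ containing $H$); so choose an $A$-invariant Sylow $p$-subgroup $P_{0}$ of the $A$-invariant subgroup $N$ via your black box, note $H\le P_{0}$ because $H$ is a normal $p$-subgroup of $N$, note $|P_{0}|>|H|$, and apply induction (say on $|G:H|$) to embed $P_{0}$, and hence $H$, in an $A$-invariant Sylow $p$-subgroup of $G$. With that fix your treatment matches the cited classical proofs.
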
 
 
\begin{lemma}
\label{FG2} 
Let $q$ be a prime, $G$ a finite $q'$-group acted on by an elementary abelian $q$-group $A$ of rank at least  $2$.\  Let $A_1, \dots,A_s$ be the maximal subgroups of $A$.\ If $H$ is an $A$-invariant subgroup of $G$ we have 
$H=\langle C_H(A_1),\dots,C_H(A_s)\rangle$. Furthermore if $H$ is nilpotent then $H=\prod_{i} C_{H}(A_{i})$.
\end{lemma}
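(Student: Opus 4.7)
The plan is to prove the generation statement first and then deduce the product decomposition in the nilpotent case by invoking Lemma \ref{generation_and_product}.

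For the generation statement, I would first observe that because $A$ is elementary abelian of rank at least $2$, every nontrivial element of $A$ generates a proper subgroup and hence lies in some maximal subgroup of $A$; thus $A^{\#} = \bigcup_{i=1}^{s} A_{i}$ and consequently $C_{H}(a) \leq C_{H}(A_{i})$ whenever $a \in A_{i}$. It therefore suffices to establish the classical identity $H = \langle C_{H}(a) : a \in A^{\#} \rangle$. I would prove this by induction on $|H|$: let $K = \langle C_{H}(a) : a \in A^{\#} \rangle$, pick a minimal $A$-invariant normal subgroup $M$ of $H$, and use Lemma \ref{FG1}(1) to transport the inductive hypothesis down to $H/M$, yielding $H = KM$. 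The remaining task is to show $M \leq K$.

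When $M$ is an elementary abelian $p$-group for some prime $p \neq q$, I would view $M$ as a semisimple $\mathbb{F}_{p}[A]$-module via Maschke's theorem; since $A$ is abelian of exponent $q$, each irreducible constituent factors through a cyclic quotient of $A$ of order $q$, whose kernel is one of the $A_{j}$, so every summand lies in some $C_{M}(A_{j}) \leq K$. The main obstacle is the case where $M$ is a direct product of isomorphic nonabelian simple groups transitively permuted by $A$; here one must invoke more delicate structural information about coprime actions on simple groups (essentially the content of the references in \cite{GO}), and this is the step I would expect to occupy the bulk of a self-contained proof.

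For the product statement, assuming in addition that $H$ is nilpotent, I would apply the generation statement already proved to each $A$-invariant term $\gamma_{i}(H)$ of the lower central series of $H$. This yields
$$
\gamma_{i}(H) \;=\; \langle\, C_{\gamma_{i}(H)}(A_{j}) : 1 \leq j \leq s\, \rangle \;=\; \langle\, \gamma_{i}(H) \cap C_{H}(A_{j}) : 1 \leq j \leq s\, \rangle,
$$
where the equality $C_{\gamma_{i}(H)}(A_{j}) = \gamma_{i}(H) \cap C_{H}(A_{j})$ is immediate from the definitions. This verifies the hypothesis of Lemma \ref{generation_and_product} applied with $G = H$ and $G_{j} = C_{H}(A_{j})$, and the lemma then delivers $H = C_{H}(A_{1}) C_{H}(A_{2}) \cdots C_{H}(A_{s})$, as required.
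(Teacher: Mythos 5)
Your opening reduction is backwards, and this is a genuine gap. For $a\in A_i$ one has $C_H(A_i)\leq C_H(a)$, not $C_H(a)\leq C_H(A_i)$: centralizing all of $A_i$ is the stronger condition. Consequently $\langle C_H(A_1),\dots,C_H(A_s)\rangle\leq\langle C_H(a): a\in A^{\#}\rangle$, so establishing the element-wise identity $H=\langle C_H(a): a\in A^{\#}\rangle$ proves something \emph{weaker} than the lemma; the two statements coincide only when $r=2$, where the maximal subgroups are exactly the subgroups $\langle a\rangle$ of order $q$. The induction has to be run with $K=\langle C_H(A_1),\dots,C_H(A_s)\rangle$ itself. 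Your Maschke argument for an abelian minimal $A$-invariant normal subgroup $M$ does in fact deliver the stronger conclusion (each irreducible constituent $W$ has $A/C_A(W)$ cyclic of exponent dividing $q$, so $W\leq C_M(A_j)$ for some $j$), so that step survives the correction; but as written the target of your induction is the wrong statement.

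The second gap is the case of $M$ a product of nonabelian simple groups, which you only acknowledge rather than prove, and which is in fact avoidable: by Lemma \ref{FG1}(2) the $A$-invariant subgroup $H$ has an $A$-invariant Sylow $p$-subgroup for each prime $p$ dividing $|H|$, and $H$ is generated by these, so the generation statement reduces to the case of a $p$-group $P$; there one passes to $P/\Phi(P)$, applies your Maschke decomposition together with Lemma \ref{FG1}(1), and removes $\Phi(P)$ by the non-generator property. No structural information about coprime actions on simple groups is needed anywhere, which is why the paper does not prove this lemma at all but cites it as well known from \cite{GO} (5.3.16, 6.2.2, 6.2.4). Your final step, deducing $H=\prod_i C_H(A_i)$ for nilpotent $H$ from the generation statement applied to each $\gamma_i(H)$ together with Lemma \ref{generation_and_product}, is correct once the generation statement is actually established.
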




We also need the following result, which is a well-known corollary of the classification of finite simple groups.
\begin{lemma}
\label{simple}
Let $G$ be a finite simple group and $A$ a group of automorphisms of $G$ with $(|A|,|G|)=1.$ Then $A$ is cyclic.
\end{lemma}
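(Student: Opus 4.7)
The plan is to reduce the problem to the structure of $\mathrm{Out}(G)$ and then invoke the classification of finite simple groups. The abelian simple case, $G \cong \mathbb{Z}/p\mathbb{Z}$, is immediate since $\mathrm{Aut}(G)$ is itself cyclic; so one may assume $G$ is nonabelian simple. In that case $Z(G) = 1$, whence $\mathrm{Inn}(G) \cong G$, and $A \cap \mathrm{Inn}(G)$ is a subgroup of $A$ whose order divides both $|A|$ and $|G|$ and is therefore trivial. Consequently $A$ embeds into $\mathrm{Out}(G) = \mathrm{Aut}(G)/\mathrm{Inn}(G)$, and the statement reduces to showing that every subgroup of $\mathrm{Out}(G)$ of order coprime to $|G|$ is cyclic.

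Next, one runs through the CFSG list. For each of the $26$ sporadic groups, $|\mathrm{Out}(G)| \leq 2$, so the claim is immediate. For alternating groups, $\mathrm{Out}(A_n)$ is cyclic of order $2$ when $n \neq 6$, while $\mathrm{Out}(A_6) \cong V_4$ has exponent $2$ and $|A_6|$ is even, so any coprime subgroup is trivial. The substantive case is that of groups of Lie type, where by Steinberg's description $\mathrm{Out}(G)$ carries a normal series
\[
1 \leq D \leq D \cdot F \leq \mathrm{Out}(G)
\]
in which $D$ is the subgroup of diagonal automorphisms, $F$ is the cyclic group of field automorphisms, and $\Gamma = \mathrm{Out}(G)/(D \cdot F)$ is the graph automorphism group, isomorphic to $1$, $\mathbb{Z}/2\mathbb{Z}$, or (only for $D_4$) the symmetric group $S_3$. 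The key observation is that every prime dividing $|D|$ or $|\Gamma|$ also divides $|G|$: for $D$ because diagonal automorphisms arise from the center of the simply connected cover, whose primes divide factors of the form $q^i - 1$ in $|G|$; and for $\Gamma$ because the primes $2$, and in the triality case $3$, always divide $|G|$. Hence a coprime $A \leq \mathrm{Out}(G)$ intersects $D$ trivially and projects trivially onto $\Gamma$, so it embeds into the cyclic group $F$ and is itself cyclic.

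The main obstacle is the Lie-type analysis: in particular the twisted groups, where field and graph automorphisms interact in a more delicate way, and the triality case $D_4$ with its nonabelian graph automorphism group $S_3$, require one to be careful with the precise extension structure of $\mathrm{Out}(G)$. However, once this structure is granted in the form stated above, the coprime hypothesis suffices to kill everything except the cyclic field component, and the lemma follows.
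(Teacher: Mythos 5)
The paper never proves this lemma: it is stated as a well-known corollary of the classification and used as a black box, so there is no in-paper argument to compare against. Your outline is the standard folklore proof and is essentially correct: the reduction (coprimality forces $A\cap \mathrm{Inn}(G)=1$, so $A$ embeds in $\mathrm{Out}(G)$) is clean, and the case analysis -- sporadic and alternating groups with $|\mathrm{Out}(G)|\le 4$, then Lie type via a normal series $1\le D\le DF\le \mathrm{Out}(G)$ with $D$ diagonal, $DF/D$ cyclic and $\mathrm{Out}(G)/DF$ of order dividing $6$ -- is how this is always done. Two points would need explicit care in a full write-up: (i) for $B_2(2^f)$, $G_2(3^f)$ and $F_4(2^f)$ the extraordinary graph automorphism squares to a field automorphism, so the ``graph'' quotient must be read as $\mathrm{Out}(G)/DF$ (still of order at most $2$), and for the twisted groups the graph part is absorbed so that $\mathrm{Out}(G)/D$ is cyclic; in all cases the quotient beyond $DF$ has order dividing $6$, and $2$ and $3$ divide $|G|$. (ii) The assertion that every prime dividing $|D|$ divides $|G|$ must be checked from the order formulas ($d=\gcd(n,q\mp 1)$ and the like, whose prime divisors divide $q^2-1$ and hence $|G|$); this is true but is a case check, not automatic. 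Granting Steinberg's description of $\mathrm{Aut}(G)$, your coprime subgroup is forced into the cyclic field component and the lemma follows, so the proposal is sound; what it adds relative to the paper is simply making explicit where CFSG enters.
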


We conclude this section by citing an important theorem  due to Gasch\"{u}tz. The proof can be found in \cite[p.\ 121]{Huppert} or  in \cite[p.\ 191]{Ro}.
\begin{theorem}
 \label{gaschutz thm}
 Let $N$ be a normal abelian $p$-subgroup of  a finite group $G$ and let $P$ be a Sylow $p$-subgroup of $G$. Then $N$ has a complement in $G$ if and only if $N$ has a complement in $P$. 
 \end{theorem}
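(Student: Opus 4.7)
The plan is to handle the easy direction first and then tackle the substantive one. Suppose $H$ is a complement of $N$ in $G$. Since $N \leq P$, Dedekind's modular law gives $P = P \cap G = P \cap NH = N(P \cap H)$, and $N \cap (P \cap H) \leq N \cap H = 1$. Thus $P \cap H$ is a complement of $N$ in $P$, settling the ``only if'' direction.

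For the converse, my strategy is cohomological. View $N$ as a $\bar{G}$-module, where $\bar{G} = G/N$ (this is well defined because $N$ is abelian), and associate to the extension $1 \to N \to G \to \bar{G} \to 1$ its obstruction class $[\xi] \in H^{2}(\bar{G}, N)$. By general theory, $N$ admits a complement in $G$ if and only if $[\xi] = 0$. The hypothesis, recast cohomologically, asserts that $\mathrm{res}([\xi]) = 0$ in $H^{2}(\bar{P}, N)$, where $\bar{P} = P/N$. Now $H^{2}(\bar{G}, N)$ is a $p$-group, because it is annihilated by the exponent of $N$, and the composition $\mathrm{cor} \circ \mathrm{res}$ equals multiplication by $[\bar{G} : \bar{P}] = [G:P]$, an integer coprime to $p$. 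Hence $\mathrm{cor} \circ \mathrm{res}$ is an automorphism of $H^{2}(\bar{G}, N)$, so $\mathrm{res}$ is injective and $[\xi] = 0$. A complement therefore exists.

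If one wishes to avoid explicit cohomology (more in the spirit of Huppert's textbook, which is cited), the alternative plan is to build the complement by hand. Fix a complement $Q$ of $N$ in $P$ and a left transversal $\{t_{1}, \ldots, t_{n}\}$ of $P$ in $G$. For each $g \in G$ and each $i$, write $g t_{i} = t_{\sigma_{g}(i)} p_{i,g}$ with $p_{i,g} = n_{i,g} q_{i,g}$ in the decomposition $P = NQ$. Because $N$ is abelian and $(n, |N|) = 1$, one can form $\nu(g) = \bigl(\prod_{i} n_{i,g}\bigr)^{c}$ where $cn \equiv 1 \pmod{\exp(N)}$, and the set $\{\nu(g) g : g \in G\}$, appropriately normalized, is a subgroup complementing $N$.

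The main obstacle in either version is proving that the candidate is actually a subgroup. In the cohomological framing this comes for free once $\mathrm{cor} \circ \mathrm{res} = [G:P] \cdot \mathrm{id}$ is invoked; in the transversal framing it requires a direct but bookkeeping-heavy verification that $\nu$ satisfies a cocycle identity and that the resulting coboundary cancels the obstruction. I would adopt the cohomological formulation, but expect the authors—writing for a finite-group-theoretic audience—to refer the reader to the transversal proof in Huppert or Robinson.
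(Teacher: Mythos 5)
Your argument is correct, but note that the paper does not prove this statement at all: it is Gasch\"utz's classical theorem, quoted with references to Huppert (p.~121) and Robinson (p.~191), so there is no ``paper proof'' to match. Your easy direction via Dedekind's law is exactly right, and your main (cohomological) argument is the standard modern proof and is complete in all essentials: the subextension $1\to N\to P\to P/N\to 1$ is classified by $\mathrm{res}\,[\xi]$, the group $H^{2}(G/N,N)$ is annihilated by the $p$-power $\exp(N)$, and $\mathrm{cor}\circ\mathrm{res}=[G/N:P/N]\cdot\mathrm{id}=[G:P]\cdot\mathrm{id}$ with $[G:P]$ prime to $p$, forcing $\mathrm{res}$ to be injective and hence $[\xi]=0$, which for abelian kernel is equivalent to the existence of a complement. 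The cited textbook proofs are essentially your second sketch: Gasch\"utz's original averaging over a transversal, which is the same computation as the cohomological one but carried out by hand (it amounts to verifying that the averaged section is a homomorphism, i.e.\ the cocycle identity you mention); your transversal sketch as written is too loose to stand alone (``appropriately normalized'' hides the whole verification), but since you designate the cohomological version as the actual proof, the proposal stands. The trade-off is the usual one: the cohomological route is short and conceptual but imports $H^{2}$, restriction and corestriction, while the transversal route is self-contained and elementary, which is presumably why the authors simply point the finite-group-theoretic reader to Huppert or Robinson.
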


\section{$A$-special subgroups}
\label{Aspecial}

In this section we introduce the concept of \emph{$A$-special subgroups} of $G$. For every integer $k\geq 0$ we define $A$-special subgroups of $G$ of degree $k$ in the following way. 

\begin{definition}
\label{DAspecial} 
Let $q$ be a prime and $A$ an elementary abelian $q$-group acting on  a finite $q'$-group $G$.
Let $A_{1},\ldots,A_{s}$ be the subgroups of index $q$ in $A$ and $H$ a subgroup of $G$. 
\begin{itemize}
\item We say that $H$ is an $A$-special subgroup of $G$ of degree $0$ if and only if $H=C_{G}(A_{i})$ for suitable $i\leq s$.  

\item Suppose that $k\geq 1$ and the $A$-special subgroups of $G$ of degree $k-1$ are defined. Then $H$ is an $A$-special subgroup of $G$ of degree $k$ if and only if there exist $A$-special subgroups $J_{1},J_{2}$ of $G$ of degree $k-1$ such that  $H=[J_{1},J_{2}]\cap C_{G}(A_{j})$ for suitable $j\leq s$.  
\end{itemize}
 \end{definition}
Here as usual $[J_{1},J_{2}]$ denotes the subgroup generated by all commutators $[x,y]$ where $x\in J_1$ and $y\in J_2$.
We note that all $A$-special subgroups of $G$ of any degree are $A$-invariant. Assume that $A$ has order $q^r$. It is clear that for a given integer $k$ the number of $A$-special subgroups of $G$ of degree $k$ is $\{q,r,k\}$-bounded. Let us denote this number by $s_{k}$.  

The $A$-special subgroups have certain properties that will be crucial for the proof of the main result of this paper.

\begin{proposition}
\label{PAspecial}
Let $A$ be an elementary abelian $q$-group of order $q^{r}$  with $r\geq 2$ acting on  a finite $q'$-group $G$ and let $A_{1},\ldots,A_{s}$ be the maximal subgroups of $A$. Let $k\geq 0$ be an integer. 

\begin{enumerate}

\item If $k\geq 1$, then every $A$-special subgroup of $G$ of degree $k$ is contained in some $A$-special subgroup of $G$ of degree $k-1$.

\item Let $K$ be an $A$-invariant subgroup of $G$ and let $K_{1},\ldots,K_{t}$ be all the subgroups of the form $K\cap H$, where $H$ is some $A$-special subgroup of $G$ of degree $k$. Let $L_{1},\ldots,L_{u}$ be all the subgroups of the form $K'\cap J$ where $J$ is some $A$-special subgroup of $G$ of degree $k+1$. If $K=\langle K_{1},\ldots,K_{t} \rangle$, then $K'=\langle L_{1},\ldots,L_{u} \rangle$. 

\item Let $R_{k}$ be the subgroup generated by all $A$-special subgroups of $G$ of degree $k$. Then $R_{k}=G^{(k)}$.

\item If $2^{k}\leq r-1$ and $H$ is an $A$-special subgroup of $G$ of degree $k$, then $H$ is contained in the $k$th derived group of $C_{G}(B)$ for some subgroup  $B\leq A$ such that $|A/B|\leq q^{2^{k}}$.

\item Suppose that $G=G'$ and let $N$ be an $A$-invariant subgroup such that $N=[N,G]$. Then for every $k\geq 0$ the subgroup $N$ is generated by subgroups of the form  $N\cap H$, where $H$ is some $A$-special subgroup of $G$ of degree $k$.

\item Let $H$ be an $A$-special subgroup of $G$. If $N$ is an $A$-invariant normal subgroup of $G$, then the image of $H$ in $G/N$ is an $A$-special subgroup of $G/N$. 
 \end{enumerate} 
\end{proposition}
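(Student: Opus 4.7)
The overall strategy is to tackle the six parts inductively in $k$, exploiting the recursive definition of $A$-special subgroups together with Lemmas~\ref{FG1} and~\ref{FG2}.

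Parts~(1) and~(4) reduce to routine inductions. For~(1): at $k=1$ one has $H\subseteq C_G(A_j)$ of degree $0$ directly, and for $k\geq 2$ the IH enlarges each $J_i$ of degree $k-1$ to some $J_i'$ of degree $k-2$, whence $H\subseteq[J_1',J_2']\cap C_G(A_j)$ is of degree $k-1$. For~(4), at the base take $B=A_i$ so that $|A/B|=q$; inductively, if $J_i\leq C_G(B_i)^{(k-1)}$ with $|A/B_i|\leq q^{2^{k-1}}$, setting $B:=B_1\cap B_2$ gives $|A/B|\leq q^{2^k}$ and $H\subseteq[C_G(B)^{(k-1)},C_G(B)^{(k-1)}]=C_G(B)^{(k)}$.

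The technical heart is~(2). Write $M=\langle K'\cap J:J\text{ is $A$-special of degree }k+1\rangle$. For each $i,j$ the subgroup $[K_i,K_j]$ is $A$-invariant, so Lemma~\ref{FG2} yields $[K_i,K_j]=\langle C_{[K_i,K_j]}(A_\ell):\ell\rangle$; each centralizer lies in $K'\cap([H_i,H_j]\cap C_G(A_\ell))\subseteq M$. Hence $[K_i,K_j]\subseteq M$ for every pair (including $i=j$), so in $K/M$ the images of the $K_i$ are abelian and pairwise commute, making $K/M$ abelian and $K'\subseteq M$; the reverse inclusion is trivial. Part~(3) is an immediate consequence: at $k=0$ by Lemma~\ref{FG2}, and at $k\geq 1$ by applying~(2) to $K=G^{(k-1)}=R_{k-1}$ whose generators are the $A$-special subgroups of degree $k-1$ themselves; since every $A$-special subgroup $J$ of degree $k$ already lies in $K'=G^{(k)}$, one has $K'\cap J=J$, giving $G^{(k)}=R_k$. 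Part~(6) is also inductive, with base Lemma~\ref{FG1}(1); the images of $J_1,J_2$ in $G/N$ are $A$-special by the IH, so one only needs to verify $([J_1,J_2]\cap C_G(A_j))N=[J_1,J_2]N\cap C_G(A_j)N$. This is a coprime-action orbit count: if $g$ lies in the right-hand side, the coset $gN$ is $A_j$-invariant, and $gN\cap[J_1,J_2]$ is a nonempty $A_j$-invariant subset of size dividing $|N|$, hence coprime to $|A|$, and so contains an $A_j$-fixed element, necessarily in $H$.

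Part~(5) is the most subtle and is where I expect the main obstacle. I would induct on $k$ (base $k=0$ by Lemma~\ref{FG2}), setting $M_k=\langle N\cap H:H\text{ is $A$-special of degree }k\rangle$. By the IH, $N=\langle N\cap J:J\text{ of degree }k-1\rangle$, and by~(3), $G=G^{(k-1)}$ is itself generated by $A$-special subgroups of degree $k-1$. Since $N=[N,G]$, every element of $N$ is a product of commutators $[n,g]$; expanding via Lemma~\ref{FG2} and the standard identities $[ab,c]=[a,c]^b[b,c]$ and $[a,bc]=[a,c][a,b]^c$, these reduce to basic commutators $[n_\alpha,g_\beta]$ with $n_\alpha\in N\cap J_1$ and $g_\beta\in J_2$ for $J_1,J_2$ of degree $k-1$. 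Each such commutator lies in $N\cap[J_1,J_2]$ (using normality of $N$), which Lemma~\ref{FG2} writes as generated by subgroups $N\cap[J_1,J_2]\cap C_G(A_\ell)=N\cap H$ with $H$ of degree $k$, and so sits in $M_k$. The difficulty is taming the $G$-conjugates that arise during the expansion without leaving $M_k$; I expect to resolve this either by showing that $M_k$ is in fact normal in $G$, via~(6) together with coprime-action Frattini-type arguments, or by a more careful induction that keeps every relevant conjugate inside $M_k$ throughout.
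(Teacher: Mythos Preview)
Your arguments for (1), (3), (4) and (6) are essentially the paper's; your treatment of (6) is in fact more careful than the paper's one-line ``immediate from Lemma~\ref{FG1}(1)''.

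There is a small but genuine gap in your proof of (2). You pass to the quotient $K/M$ without first checking that $M$ is normal in $K$; having $[K_i,K_j]\le M$ for all $i,j$ does \emph{not} by itself make the subgroup they generate normal in $K=\langle K_1,\dots,K_t\rangle$. This is exactly where part~(1) earns its keep: each generator $L_r=K'\cap J$ of $M$ has $J$ of degree $k+1$ contained (by~(1)) in some $A$-special $H$ of degree $k$, hence $L_r\le K\cap H=K_{l}$ for some $l$, and then $[L_r,K_m]\le[K_l,K_m]\le M$. So $M$ is normalized by every $K_m$ and your $K/M$ argument then goes through. The paper organizes (2) slightly differently (it starts from $M=\langle[K_i,K_j]\rangle$ and proves this is normal by the same use of~(1)), but the substance is identical.

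For (5) your diagnosis is right and your proposed cure --- prove $M_k$ is normal in $G$ --- is exactly what the paper does; but the route you sketch (commutator expansions, Frattini-type arguments, part~(6)) is more elaborate than needed. The paper's argument is short: since $G=G'$, part~(3) gives $G=\langle H_1,\dots,H_{s_k}\rangle$ with the $H_j$ of degree $k$; for $N_i=N\cap H_i$ one has $[N_i,H_j]$ $A$-invariant, and each piece $[N_i,H_j]\cap C_G(A_\ell)$ lies in $N$ (normality of $N$) and in an $A$-special subgroup of degree $k+1$, hence by~(1) in some $N_m$. Thus $M_k\trianglelefteq G$. Then, using the inductive generators $L_i=N\cap K_i$ (degree $k-1$), the same computation puts every $[L_i,H_j]$ inside $M_k$, so in $G/M_k$ the image of $N$ is central; but $N=[N,G]$, forcing $N\le M_k$. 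No commutator-identity bookkeeping is required.
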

  
\begin{proof}
(1)  If $k=1$ and  $H$ is an $A$-special subgroup of $G$ of degree $1$, then $H=[J_{1},J_{2}]\cap C_{G}(A_{j})$ for a suitable $j\leq s$.  Observe that  $H\leq C_{G}(A_{j})$ and the centralizer $C_{G}(A_{j})$ is an $A$-special subgroup of $G$ of degree $0$. Assume that $k\geq 2$ and use induction on $k$. Let $H$ be an $A$-special subgroup of degree $k$. We know that there exist $A$-special subgroups $J_{1}, J_{2}$ of $G$ of degree $k-1$ such that $H=[J_{1},J_{2}]\cap C_{G}(A_{j})$ for suitable $j\leq s$. By induction $J_{i}$ is contained in some $A$-special subgroup $L_{i}$ of $G$ of degree $k-2$. Observe that $[L_{1},L_{2}]\cap C_{G}(A_{j})$ is an $A$-special subgroup of $G$ of degree $k-1$ and $H\leq [L_{1},L_{2}]\cap C_{G}(A_{j})$, so the result follows.

(2) Set $M=\langle [K_{i},K_{j}] \mid  1\leq i,j \leq t\rangle$. It is clear that each of the subgroups $[K_{i},K_{j}]$ is $A$-invariant. Thus, by Lemma \ref{FG2} each subgroup $[K_{i},K_{j}]$ is generated by  subgroups of the form $[K_{i},K_{j}]\cap C_{G}(A_{l})$, where $l=1,\ldots,s$.  Note that each  subgroup $[K_{i},K_{j}]\cap C_{G}(A_{l})$ is  contained in an $A$-special subgroup of $G$ of degree $k+1$. Hence $M$ is generated by subgroups of the form $M\cap D$, where $D$ ranges through the set of all $A$-special subgroups of $G$ of degree $k+1$.  If $M^*=M\cap D^*$ is such a subgroup we claim that $[M^*,K_{j}]\leq M$ for every $1\leq j\leq t$. Indeed, by (1) we know that there exists  some $A$-special subgroup $H$ of $G$ of degree $k$ such that $D^*\leq H$. This implies that $M^*$ is contained in some $K_{l}$ and so we have  $[M^*,K_{j}]\leq[K_{l},K_{j}]\leq M$, as desired.  Therefore $M$ is normal in $K$ and we conclude that $M=K'$.  The result now follows.  

(3) If $k=0$ the result is immediate from Lemma \ref{FG2}. Therefore we assume that $k\geq 1$ and set $N=R_{k-1}$. By induction on $k$ we assume that $N=G^{(k-1)}$. Let $D_{1},D_{2},\ldots,D_{s_{k-1}}$ be the $A$-special subgroups of $G$ of degree $k-1$ and $H_{1},H_{2},\ldots,H_{s_{k}}$ be the $A$-special subgroups of $G$ of degree $k$. It follows from (2) that $G^{(k)}=\langle [D_{i},D_{j}] \mid  1\leq i,j \leq s_{k-1}\rangle$. Since each subgroup $[D_{i},D_{j}]$ is $A$-invariant, it follows from Lemma \ref{FG2} that it is generated by  subgroups of the form $[D_{i},D_{j}]\cap C_{G}(A_{l})$, where $l=1,\ldots,s$. These are precisely $A$-special subgroups of $G$ of degree $k$ so the result follows.

(4) If $k=0$ this is clear because $H=C_{G}(A_{i})$ for a suitable $i\leq s$ and $|A/A_{i}|=q$. 
Assume that $k\geq 1$ and use induction on $k$. We have $H=[J_{1},J_{2}]\cap C_{G}(A_{j})$ for a suitable $j\leq s $ and $A$-special subgroups $J_{1},J_{2}$ of $G$ of degree $k-1$. By induction there exist subgroups $B_{1},B_{2}\leq A$ such that $|A/B_{i}|\leq q^{2^{k-1}}$ and $J_{i}\leq C_{G}(B_{i})^{(k-1)}$ where $i=1,2$. Set $B=B_{1}\cap B_{2}$.  Observe that 
$H\leq [J_{1},J_{2}]\leq [C_{G}(B_{1})^{(k-1)},C_{G}(B_{2})^{(k-1)}]\leq [C_{G}(B)^{(k-1)}, C_{G}(B)^{(k-1)}]$. Thus $H\leq C_{G}(B)^{(k)}$ and $|A/B|\leq q^{2^{k}}$, as required.

(5) For $k=0$ this follows from Lemma \ref{FG2}. Assume that $k\geq 1$ and use induction on $k$. 
Let $N_{1},\ldots, N_{t}$ be all the subgroups of the form $N\cap H$ where $H$ is some $A$-special subgroup of degree $k$ and set  $M=\langle N_{1},\ldots, N_{t}\rangle$. We want to show that $N=M$. 

Since $G=G'$  by (3) $G$ can be generated by all $A$-special subgroups of degree $k$, for any $k\geq1$. Thus $G=\langle H_{1},\ldots, H_{s_{k}}\rangle$, where $H_{j}$ is $A$-special subgroup of degree $k$. Lemma \ref{FG2} shows that for all $i$ and $j$ the commutator $[N_{i},H_{j}]$ is generated by subgroups of the form $[N_{i},H_{j}]\cap C_{G}(A_{l})$, where $l=1,\dots,s$. Note that each subgroup $[N_{i},H_{j}]\cap C_{G}(A_{l})$ is contained in $N$ since $N=[N,G]$ and on the other hand  it is also contained in some $A$-special subgroup of degree $k$, so $[N_{i},H_{j}]\cap C_{G}(A_{l})\leq N_{m}$ for a suitable $m\leq t$.  This implies that  $M$ is normal in $G$. 

Let  now $L_{1},\ldots, L_{u}$ be all the subgroups of the form $N\cap K$ where $K$ is some $A$-special subgroup of degree $k-1$,  so  by induction  we can assume that $N=\langle L_{1},\ldots,L_{u} \rangle$.  For all $i$ and $j$, using the argument as above, it is easy to show that  $[L_{i},H_{j}]$ is generated by subgroups of the form  $[L_{i},H_{j}]\cap C_{G}(A_{l})$, where $l=1,\dots,s$ and that  each such a subgroup is contained in some $N_{m}$, for a suitable $m\leq t$.

If $M=1$, then,  for all $m\leq t$, $N_{m}=1$ and so $[L_{i},H_{j}]=1$ for all $i$ and $j$.  Hence $N$ is central in $G$ but this is a contradiction because $N=[N,G]$.   Assume  now that $M$ is a nontrivial subgroup  strictly contained  in $N$. Since  we have shown that $M$ is normal we can pass to the quotient $G/M$. In the quotient $N/M$ is central so $[N,G]\leq M$ but this contradicts the assumption that $M<N$. Thus we conclude that $M$ must be equal to $N$.     

(6) This is immediate from  Lemma \ref{FG1}(1) and the definitions. 
\end{proof}

\section{Some generation results}
\label{generation_results}

Throughout this section let $q$ be a prime, $G$ a finite $q'$-group and $A$ an elementary abelian group of order $q^{r}$ acting on $G$.   
We will show that  if $P$ is an $A$-invariant Sylow  $p$-subgroup of $G^{(d)}$, then it can be generated by its intersections with $A$-special subgroups of $G$ of degree $d$.  

\begin{theorem}
\label{generation1}
Assume $r\geq 2$. Let $P$  be an $A$-invariant Sylow $p$-subgroup of $G^{(d)}$ for some fixed integer $d\geq 0$. Let $P_{1},\ldots,P_{t}$ be the subgroups of the form $P\cap H$ where $H$ is some $A$-special subgroup of $G$ of degree $d$. Then $P=\langle P_{1},\dots,P_{t}\rangle$.
\end{theorem}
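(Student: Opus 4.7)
My plan is to argue by induction on $|G|$. The base case $d=0$ follows at once from Lemma \ref{FG2}: the $A$-special subgroups of $G$ of degree $0$ are by definition the centralizers $C_G(A_1),\dots,C_G(A_s)$, so $P_i=P\cap C_G(A_i)=C_P(A_i)$, and Lemma \ref{FG2} applied to the $A$-invariant $p$-group $P$ immediately yields $P=\langle C_P(A_1),\dots,C_P(A_s)\rangle$.

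For $d\geq 1$ set $M=\langle P_1,\dots,P_t\rangle$; the aim is $M=P$. The first step is a reduction modulo a proper nontrivial $A$-invariant normal subgroup $N$ of $G$. By Proposition \ref{PAspecial}(6), together with Lemma \ref{FG1}(1), the $A$-special subgroups of $\bar G=G/N$ of degree $d$ are precisely the images of those of $G$, while $PN/N$ is an $A$-invariant Sylow $p$-subgroup of $\bar G^{(d)}=G^{(d)}N/N$. Applying the induction hypothesis in $\bar G$ and lifting back to $G$ — using Lemma \ref{FG1}(2) to select an $A$-invariant Sylow $p$-subgroup of $HN$ sitting inside $H$ when $N$ is a $p'$-group, and the inclusion $N\leq P$ when $N$ is a $p$-subgroup — reduces the problem to the case where $G$ admits no proper nontrivial $A$-invariant normal subgroup.

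In that situation $G$ is a direct product of pairwise isomorphic simple groups transitively permuted by $A$. The abelian subcase is immediate (either $G$ is abelian so $G^{(d)}=1$ for $d\geq 1$, or $G$ is a $p'$-group and $P=1$), so one may assume the factors $S_1,\dots,S_n$ are nonabelian; then $G$ is perfect, $G^{(d)}=G$, and $P$ is an $A$-invariant Sylow $p$-subgroup of $G$. This is where the only invocation of CFSG enters, through Lemma \ref{simple}: applied to each $S_i$, it forces the image of $A$ in $\mathrm{Aut}(S_i)$ to be cyclic and hence centralised by some maximal subgroup of $A$. Exploiting the transitive permutation of the factors, one deduces that for every maximal subgroup $A_j\leq A$ used to generate $P$ via Lemma \ref{FG2}, the piece $P\cap C_G(A_j)=C_P(A_j)$ already lies inside $P\cap H$ for a suitable $A$-special subgroup $H$ of degree $d$, built by iterating commutators of degree-$0$ $A$-special subgroups and intersecting with centralizers (which can be arranged because $G$ is perfect and generated by the $C_G(A_i)$'s). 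Hence $P\leq M$.

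The hard part, I expect, is the last step: extracting from the cyclicity conclusion of Lemma \ref{simple} enough structural information to realise each relevant $C_P(A_j)$ inside the intersection of $P$ with some concretely constructed $A$-special subgroup of degree $d$. The commutator-and-intersection construction is clean when $A$ stabilises each simple factor, but when $A$ permutes the factors nontrivially one has to track carefully how the permutation action interacts with the centraliser structure. Apart from this — and the standard but fiddly Sylow bookkeeping in the $p'$-reduction — everything else is a routine exercise in coprime action combined with the structural properties of $A$-special subgroups already established in Proposition \ref{PAspecial}.
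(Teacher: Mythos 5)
Your reduction step has a genuine gap at precisely the point where the real work of the theorem lies. From the induction hypothesis applied to $G/N$ you only obtain $PN=XN$, where $X=\langle P_1,\dots,P_t\rangle$; when $N$ is a $p$-group the inclusion $N\leq P$ turns this into $P=XN$, which does \emph{not} give $P=X$ --- for that you must show $P\cap N\leq X$, and nothing in your argument does so. This is exactly where the paper has to work hardest: if $G\neq G'$ it passes to $G/G^{(d+1)}$ so that $G^{(d)}$ becomes an abelian $p$-group equal to $P$ and invokes Proposition \ref{PAspecial}(3); if $G=G'$ and $N=[N,G]$ it needs Proposition \ref{PAspecial}(5), which your proposal never uses; and if $G=G'$ with $N$ central it needs Gasch\"utz's theorem (Theorem \ref{gaschutz thm}) to produce a complement of $N$ in $G$ and contradict $G=G'$. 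Consequently your claimed reduction to a group with no proper nontrivial $A$-invariant normal subgroup is unjustified; the paper makes no such reduction, but instead fixes a minimal $A$-invariant normal subgroup $N$ in a minimal counterexample and proves $P\cap N\leq X$ case by case, quoting its Lemma \ref{product_of_simple} only when $N$ is perfect, i.e.\ a product of nonabelian simple groups.

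The second gap is that the semisimple case is not actually proved: you explicitly defer ``the hard part'', namely realising each relevant piece of $P$ inside the intersection of $P$ with a concretely constructed $A$-special subgroup of degree $d$. That is the content of Lemma \ref{product_of_simple}, which takes up most of Section 4: after reducing to a faithful action transitive on the factors, one analyses the stabilizer $B$ of a simple factor (cyclic by Lemma \ref{simple}); in the case $r=2$, $B\neq 1$ one needs Thompson's theorem to see $C_{S_j}(a)\neq 1$ for the element $a\in B$, whence $[C_G(a),C_G(b)]=G$ and every centralizer $C_G(c)$ is itself an $A$-special subgroup of every degree; for $r\geq 3$ one chooses a rank-two subgroup $E$ with $E\cap B=1$, writes $P=\prod_{a\in E^{\#}}C_P(a)$, and applies induction to the group of automorphisms induced on each $C_G(a)$, checking that $D$-special subgroups of $C_G(a)$ are $A$-special subgroups of $G$. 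None of this is routine bookkeeping, and without it (and without the $p$-group case above) the proposal does not amount to a proof.
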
 

We first handle the case where $G$ is a direct product of simple groups.
 \begin{lemma}
 \label{product_of_simple}
 Assume  that $r\geq 2$ and $G$ is a direct product of nonabelian simple groups. Let $P$ be an $A$-invariant Sylow $p$-subgroup of $G$, and  for some fixed integer $d\geq 0$ let $P_{1},\ldots,P_{t}$ be all the subgroups of the form $P\cap H$, where $H$ is some $A$-special subgroup of $G$ of degree $d$. Then $P=\langle P_{1},\dots,P_{t}\rangle$. 
 \end{lemma}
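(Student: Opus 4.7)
\emph{Proof plan.} My plan is to first reduce to the case where $A$ acts transitively on the set of simple direct factors of $G$, and then induct on $d$. Write $G = G_1 \times \cdots \times G_n$ with $G_\alpha$ the product of simple factors in one $A$-orbit. Since elements of distinct $G_\alpha$'s commute, every $A$-special subgroup of $G$ of degree $d$ factors as $\prod_\alpha H_\alpha$ with $H_\alpha$ an $A$-special subgroup of $G_\alpha$ of the same degree (built from the same sequence of maximal subgroups of $A$), and $P = \prod_\alpha P_\alpha$. For each index $\alpha_0$ and each degree-$d$ $A$-special $H^{*}$ in $G_{\alpha_0}$ there is a compatible $H = \prod_\beta H_\beta$ in $G$ with $H_{\alpha_0} = H^{*}$, and the intersection $P \cap H = \prod_\beta (P_\beta \cap H_\beta)$ contains $P_{\alpha_0}\cap H^{*}$ embedded in the $\alpha_0$-slot. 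Hence $\langle P\cap H\rangle$ contains $P_{\alpha_0}$ whenever the lemma holds for $G_{\alpha_0}$, and it suffices to prove the lemma when $A$ acts transitively on the simple factors. An orbit of size one would force $A$ to act coprimely on a simple group and therefore (by Lemma \ref{simple}) to be cyclic, contradicting $r \geq 2$; so in the reduced problem $G = S^m$ with $m \geq q$ and the stabilizer $B$ of a single factor is a proper subgroup of $A$.

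Next I would induct on $d$. The base case $d = 0$ is Lemma \ref{FG2} applied to the nilpotent $A$-invariant group $P$: the degree-$0$ $A$-special subgroups are the $C_G(A_j)$'s, and Lemma \ref{FG2} yields $P = \prod_j C_P(A_j) = \prod_j (P\cap C_G(A_j))$. For the inductive step, the inductive hypothesis gives $P = \langle P\cap J \mid J \text{ of degree } d-1\rangle$; Proposition \ref{PAspecial}(2) applied with $K = P$ immediately upgrades this to $P' = \langle P'\cap H \mid H \text{ of degree } d\rangle$. What remains is to promote the generation from $P'$ up to $P$. The plan here is to use Proposition \ref{PAspecial}(3), which says that $G$ itself (being perfect) is generated by its degree-$d$ $A$-specials, together with the explicit form of the centralizers $C_G(A_j)$ in the transitive setup, namely $C_S(B)^q$ when $A_j \supseteq B$ and a single diagonal copy of $C_S(A_j \cap B)$ when $A_j \not\supseteq B$, and the perfectness of the simple factor $S$, in order to realize each $C_P(A_j)$ as generated by degree-$d$ intersections $P\cap H$. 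The $d = 0$ decomposition $P = \prod_j C_P(A_j)$ then finishes the argument.

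The principal obstacle is this last promotion step. The clean abstract mechanism supplied by Proposition \ref{PAspecial}(5) requires $N = [N, G]$ for $N = P$, which fails because an $A$-invariant Sylow $p$-subgroup is not normal in $G$ in general. The route around this must use the product-of-simples structure directly: an $A$-invariant Sylow has the explicit form $P_0^{a_1}\cdots P_0^{a_m}$ for a $B$-invariant Sylow $P_0$ of $S$, and one must track carefully how iterated commutators of the diagonal centralizers associated to different maximal subgroups $A_j \leq A$, re-intersected with further centralizers, recover $P_0$ coordinate by coordinate using the perfectness of $S$ and the transitive action of $A$ on the $m$ factors.
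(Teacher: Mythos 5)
Your reduction to the case where $A$ permutes the simple factors transitively, and the base case $d=0$ via Lemma \ref{FG2}, are fine, but the proposal has two genuine gaps. First, you dismiss the one-orbit case by saying that Lemma \ref{simple} would force $A$ to be cyclic, contradicting $r\geq 2$; this only works if $A$ acts faithfully, which you never arrange. A non-faithful action of a rank-$r$ group on a single simple factor (or on any transitive product) is perfectly possible, and the paper has to deal with it by a separate induction on $r$, in which the sub-case $r=2$ with trivial action is handled by observing that $G=[C_G(a),C_G(a)]\cap C_G(a)$ is itself an $A$-special subgroup of every degree. Faithfulness is not a cosmetic point: it is exactly what makes the stabilizer $B$ of a factor cyclic (via Lemma \ref{simple} and the fact that $A$ is abelian), and cyclicity of $B$ is what later allows one to choose a rank-two subgroup $E$ with $E\cap B=1$. (Your explicit description of the centralizers, e.g.\ $C_G(A_j)\cong C_S(B)^q$ when $A_j\supseteq B$, also tacitly assumes $|A:B|=q$, which need not hold.)

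Second, and decisively, the heart of the lemma is the step you yourself label ``the principal obstacle'': showing that $P$ (equivalently, each $C_P(A_j)$, or each $C_P(a)$) is generated by its intersections with degree-$d$ $A$-special subgroups when $d\geq 1$. Your induction on $d$ only produces the statement for $P'$ via Proposition \ref{PAspecial}(2), and the promised promotion back up to $P$ is left as a plan (``one must track carefully how iterated commutators \dots recover $P_0$ coordinate by coordinate''), not an argument; since the analogous statement fails for general groups, this is precisely where the product-of-simples hypothesis must enter, and nothing in the proposal makes it enter. The paper's mechanism is different: in the transitive faithful setting it proves that the centralizers themselves are $A$-special subgroups of every degree --- either because $C_G(a)$ is a product of diagonals of simple groups, hence perfect, or because, when $r=2$ and $B\neq 1$, Thompson's theorem gives $C_{S_j}(a)\neq 1$ and hence $[C_G(a),C_G(b)]=G$, so that $C_G(c)=[C_G(a),C_G(b)]\cap C_G(c)$ --- after which Lemma \ref{FG2} finishes $r=2$; for $r\geq 3$ it chooses $E$ of rank two with $E\cap B=1$, writes $P=\prod_{a\in E^{\#}}C_P(a)$, inducts on $|G|$ inside each $C_G(a)$, and identifies the $D$-special subgroups of $C_G(a)$ with $A$-special subgroups of $G$. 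No counterpart of these ideas appears in your outline, so the proof is not complete.
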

 
 \begin{proof} Let $G=S_{1}\times \cdots \times S_{m}$. By induction on the order of $G$  we may assume that $A$ permutes transitively the simple factors $S_{1},\ldots,S_{m}$. 

We will now use induction on $r$ to show that without loss of generality it can be assumed that $A$ acts on $G$ faithfully. Suppose that some element $a\in A^{\#}$ acts on $G$ trivially. Thus, $C_{G}(a)=G$.

Since $G$  is a product of nonabelian simple groups  it follows that  $[C_{G}(a),C_{G}(a)]=G$ and $[C_{G}(a),C_{G}(a)]\cap C_{G}(a)=G$. Thus if $r=2$, then $G$ itself is an $A$-special subgroup of degree $1$. From this it is easy to see that $G$ is an $A$-special subgroup of degree $d$ and the lemma follows immediately. 

Suppose now that $r\geq 3$ and let $A_{1},\ldots,A_{s}$ be the maximal subgroups of $A$.  Put $\overline{A}=A/\langle a\rangle$. If $A_{1},\ldots A_{t}$ are the maximal subgroups of $A$ containing $a$, then $\overline{A_{1}},\ldots, \overline{A_{t}}$ are maximal subgroups of $\overline{A}$. Then  $C_{G}(\overline{A_{i}})=C_{G}(A_{i})$ for all $i \leq s$, and so we can consider $\overline{A}$ instead of $A$ and use induction on $r$. 

Thus, from now on we assume that $A$ is faithful on $G$. Let $B$ be the stabilizer of $S_{1}$ in $A$.  Then by Lemma \ref{simple} $B$ is cyclic. Remark that if $b\notin B $, then $C_{G}(b)$ is a product of simple groups. Indeed if we consider all the $b$-orbits, then it is not difficult to see that $C_{G}(b)$ is the product of  the diagonal subgroups of these $b$-orbits, i.e., $C_{G}(b)$ is a product of simple groups, one for each $b$-orbit.

Suppose  that $r=2$ and $B\neq 1$.  Let  $a$ be a nontrivial element of $B$ and choose $b\in  A$ that permutes  $S_{1},\ldots,S_{q}$. Observe that  the case  where $A=B$ does not happen because of Lemma \ref{simple}. Since $b\notin B$ from the above remark we know that $C_{G}(b)=diag(S_{1}\times \cdots \times S_{q})$ is a diagonal subgroup of $G$. On the other hand it follows from the Thompson Theorem \cite{T} that  $C_{S_{1}}(a) \neq 1$ and this  holds also  for the other factors $S_2,\dots,S_q$ because $a$ normalizes each of the simple factors. Thus $C_{G}(a)=C_{S_{1}}(a)\times \cdots \times C_{S_{q}}(a)$ and we have 
\begin{equation}
\label{eq1}
\begin{split}
[C_{G}(a),C_{G}(b)]= &\\
[C_{S_{1}}(a),diag(S_{1}\times &\cdots \times S_{q})]\times\cdots \times [C_{S_{q}}(a),diag(S_{1}\times \cdots \times S_{q})].
  \end{split}\end{equation}
Furthermore observe that  for  any $j=1,\ldots,q$
\begin{equation} 
\label{eq2}
[C_{S_{j}}(a),diag(S_{1}\times \cdots \times S_{q})]=[C_{S_{j}}(a),S_{j}]=S_{j},
\end{equation}
 where the first equality follows from the fact that the simple  factors commute each other and the second one holds since $[C_{S_{j}}(a),S_{j}]$ is a nontrivial normal subgroup of $S_{j}$.  By (\ref{eq1}) and (\ref{eq2})  we see that $[C_{G}(a),C_{G}(b)]=S_{1}\times \cdots \times S_{q}=G.$  
Thus, for any $c\in A^{\#}$, $C_{G}(c)=[C_{G}(a),C_{G}(b)]\cap C_{G}(c)$ and so the centralizer $C_{G}(c)$ is also an $A$-special subgroup of degree $1$.  We deduce  that, for  any $a\in A^{\#}$,  the centralizer  $C_{G}(a)$ is an $A$-special subgroup of $G$ of any degree.  Since Lemma \ref{FG2} tells us that $P$ can be generated by  subgroups of the form $P\cap C_{G}(A_{i})$ where $A_{i}$ are the maximal subgroups of $A$, the result follows. 

Next, assume that  $r=2$ and $B=1$.  Note that $A$ permutes the factors $S_{1},\ldots,S_{q^{2}}$ and,  for any $a\in A^{\#}$,  the centralizer $C_{G}(a)$ is a product of $q$ simple groups, one for each $a$-orbit. Thus $C_{G}(a)$ is perfect and, in particular, it is an $A$-special subgroup of any degree for all $a\in A^{\#}$.  The lemma follows.   

Finally assume that $r\geq3$. Since $B$ is cyclic we have  $|A:B|\geq q^{2}$.  Hence we can choose a subgroup $E$ of type $(q,q)$ that intersects $B$ trivially.  Note that  for  all $a\in E^{\#}$, the centralizer $C_{G}(a)$ is a product of simple groups. Moreover Lemma \ref{FG2} shows that $P=\prod_{a\in E^{\#}}C_{P}(a)$. Therefore it is sufficient to prove that for each $a\in E^{\#}$ the subgroup $C_{G}(a)\cap P$ is generated by its intersections with all the $A$-special subgroups of $G$ of degree $d$. 
 
Fix $a$ in $E^{\#}$. Let $D$ be the group of automorphisms induced on $C_{G}(a)$ by $A$. By induction $C_{G}(a)\cap P$ is generated by subgroups of the form $(C_{G}(a)\cap P)\cap H$, where $H$ ranges through  the set of $D$-special subgroups of $C_{G}(a)$ of degree $d$.  We now remark that any $D$-special subgroup of $C_{G}(a)$ of  any degree is in fact  an $A$-special subgroup of $G$ of the same degree.  This follows form  Definition \ref{DAspecial} and from the fact that if $A_{i}$ is a maximal subgroup of $A$ containing $a$, then there exists a maximal subgroup $D_{j}$ of $D$ such that $C_{C_{G}(a)}(D_{j})=C_{G}(A_{i})$.  Thus we can conclude that $C_{G}(a)\cap P$ is generated by subgroups of the form $(C_{G}(a)\cap P)\cap H$, where now $H$ can be regarded as an $A$-special subgroup of $G$ of degree $d$. The proof is now complete.
 \end{proof}

 We now are ready to complete the proof of Theorem  \ref{generation1}.
 
 \begin{proof}[Proof of Theorem \ref{generation1}]
Let $G$ be a counterexample of minimal order and let $N$ be a minimal  normal $A$-invariant subgroup of  $G$. Set  $X=\langle P_{1},\ldots,P_{t}\rangle$. By minimality  and Proposition \ref{PAspecial}(6) $PN=XN$. To prove that $P=X$ it is sufficient to show that $P\cap N \leq X$. 

First suppose that $N$ is a $p'$-group. In this case the intersection $P\cap N$ is trivial  and  there is nothing to prove.
 
Next suppose that $N$ is perfect. Since $N$ is characteristically simple,  $N$ is a product of nonabelian simple groups. It follows from Lemma \ref{product_of_simple} that $P\cap N$ is contained in $X$ and we are done.

Thus, it remains  to consider the case where $N$ is a $p$-group. Suppose that $G \neq G'$. By induction we know that  every $A$-invariant Sylow $p$-subgroup of $G^{(d+1)}$ is generated  by its intersections with  all the $A$-special subgroups of $G'$ of degree $d$ . Therefore we can pass to the quotient $G/G^{(d+1)}$ and assume that  $G^{(d+1)}=1$. This implies that $G^{(d)}$ is abelian and so  we may assume  that $G^{(d)}$ is a $p$-group. Then $G^{(d)}=P$.  It follows from Proposition \ref{PAspecial}(3)  that $P$ is generated by $A$-special subgroups of $G$ of degree $d$  and  the result holds.    

We are reduced to the  case that  $G=G'$. Since $N$ is minimal, either  $N=[N,G]$ or $N\leq Z(G)$.

If $N=[N,G]$ we note that $P\cap N=N$ because $N$ is contained in $P$.   Since $N=[N,G]$, Proposition \ref{PAspecial}(5) shows that  $N$ is generated by  its intersections with all the $A$-special subgroups of $G$ of degree $d$ and  so $N\leq X$, as desired.

Now suppose $N$ central in $G$. Then $N$ is of order $p$ and either $N$ is contained in every maximal subgroup of $P$ or there exists a maximal subgroup $S$ in $P$ such that $P=NS$. 

In the former case $N\leqslant \Phi(P)$. Since we know that $P=XN$, it follows that $P=X$, as required. 

In the latter case, by Theorem \ref{gaschutz thm}, $N$ is also complemented in $G$ and so $G=NH$ for some subgroup $H\leq G$. Since $N$ is central, we have $G=N\times H$. This yields a contradiction because we have  assumed that $G=G'$.  
\end{proof}

We note some consequences of Theorem \ref{generation1}. These facts will be useful later on. 

\begin{lemma}
\label{generation2}
Under the  hypothesis  of Theorem \ref{generation1} let $P^{(l)}$ be the $l$th derived group of $P$. Then $P^{(l)}= \langle P^{(l)} \cap P_{j} \mid 1 \leq j \leq t \rangle$.
\end{lemma}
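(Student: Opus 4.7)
My plan is to prove by induction on $l$ the slightly stronger statement that for every $l\geq 0$ the subgroup $P^{(l)}$ is generated by its intersections with all $A$-special subgroups of $G$ of degree $d+l$. The case $l=0$ is precisely Theorem~\ref{generation1}. For the inductive step, $P^{(l)}$ is $A$-invariant (as a characteristic subgroup of the $A$-invariant subgroup $P$), so Proposition~\ref{PAspecial}(2) applies with $K=P^{(l)}$ and $k=d+l$: the induction hypothesis gives precisely the generation condition required on $K$, and the conclusion of the proposition is that $K'=P^{(l+1)}$ is generated by its intersections with $A$-special subgroups of $G$ of degree $d+l+1$, closing the induction.

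Once this strengthened statement is established, the lemma follows by descending back from degree $d+l$ to degree $d$. Iterating Proposition~\ref{PAspecial}(1) $l$ times, every $A$-special subgroup $J$ of $G$ of degree $d+l$ is contained in some $A$-special subgroup $H$ of $G$ of degree $d$. Since $P^{(l)}\leq P$, for such $J$ and the corresponding $H$ one has
\[
P^{(l)}\cap J \leq P^{(l)}\cap H = P^{(l)}\cap (P\cap H) = P^{(l)}\cap P_{j}
\]
for a suitable $j\leq t$. Thus each of the generators of $P^{(l)}$ supplied by the strengthened statement already lies in $\langle P^{(l)}\cap P_{j}\mid 1\leq j\leq t\rangle$; the reverse inclusion is obvious, giving the desired equality.

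I do not anticipate any genuine obstacle in this argument: the whole proof reduces to combining Theorem~\ref{generation1} with parts (1) and (2) of Proposition~\ref{PAspecial}. The only point worth highlighting is the need to set the induction up so that the degree of the $A$-special subgroups in play grows together with $l$, since this is exactly the shape of the generational statement delivered by Proposition~\ref{PAspecial}(2); the final passage back to degree $d$ uses Proposition~\ref{PAspecial}(1) only through the trivial containment of intersections displayed above.
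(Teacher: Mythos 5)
Your proposal is correct and follows essentially the same route as the paper: both prove by induction on $l$, via Proposition~\ref{PAspecial}(2), that $P^{(l)}$ is generated by its intersections with the $A$-special subgroups of degree $d+l$, and then descend to degree $d$ by Proposition~\ref{PAspecial}(1) and the containment $P^{(l)}\cap J\leq P^{(l)}\cap P_{j}$. No gaps to report.
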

\begin{proof}
First we want to establish the following fact: 

\begin{equation}
\label{generation}
\begin{aligned}
&\text{The group}\,   P^{(l)} \, \text{is generated by  all the subgroups of the form}\\
&P^{(l)}\cap D,\, \text{where}\,  D \, \text{ranges through the set of all}\, A\text{-special sub-}\\
&\text{groups of} \, G \, \text{of degree}\, d+l.\\ 	
\end{aligned}
\end{equation}
Indeed if  $l=0$ then (\ref{generation}) is exactly  Theorem \ref{generation1}. Assume that $l\geq 1$ and use induction on $l$.  Let $L_{1},\ldots, L_{u}$ be all the subgroups of the form $P^{(l)}\cap J$, where $J$ is some $A$-special subgroup of $G$ of degree $d+l$.  By induction $P^{(l-1)}$ is generated by subgroups of the form $P^{(l-1)}\cap D$, where $D$ is some $A$-special subgroup of $G$ of degree $d+(l-1)$. It now follows from Proposition \ref{PAspecial}(2) that $P^{(l)}=\langle L_{1},\ldots, L_{u}\rangle$ and this concludes the proof of (\ref{generation}). 

Now for $l=0$ the lemma is obvious since by Theorem \ref{generation1}   $P=\langle P_{1},\ldots, P_{t}\rangle$, where  each subgroup $P_{j}$ is of the form $P\cap H$ for some $A$-special subgroup $H$ of $G$ of degree $d$.   
Assume that $l\geq 1$. Proposition \ref{PAspecial}(1) tells us that every $A$-special subgroup  $D$ of degree $d+l$ is contained in some $A$-special subgroup $H$ of degree $d$. Combined with (\ref{generation}) this implies that each subgroup  of the form $P^{(l)}\cap D$ is contained in $P_{j}$, for a suitable $j\leq t$. Thus $P^{(l)}=\langle P^{(l)}\cap P_{j} \mid 1 \leq j \leq t \rangle$, as required.    
\end{proof}

Combining Theorem \ref{generation1} with Lemma \ref{generation_and_product} we obtain a further refinement of  Theorem \ref{generation1}.

\begin{corollary}
\label{generation3}
$P=P_{1}P_{2}\cdots P_{t}$.
\end{corollary}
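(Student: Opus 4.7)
The plan is to apply Lemma \ref{generation_and_product} to the nilpotent $p$-group $P$ (in the role of $G$) with the subgroups $P_1,\ldots,P_t$ playing the role of the generators $G_1,\ldots,G_t$. The generation hypothesis $P=\langle P_1,\ldots,P_t\rangle$ is exactly Theorem \ref{generation1}, so the only thing left to check is the commutator hypothesis
\[
\gamma_i(P)=\langle\gamma_i(P)\cap P_j\mid 1\le j\le t\rangle\qquad\text{for every }i\ge 1.
\]
Granted this, Lemma \ref{generation_and_product} immediately delivers $P=P_1P_2\cdots P_t$.

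To verify the commutator hypothesis I would induct on $i$, establishing the stronger statement that $\gamma_i(P)$ is generated by its intersections with the $A$-special subgroups of $G$ of degree $d+(i-1)$. The case $i=1$ is Theorem \ref{generation1} itself and $i=2$ is Lemma \ref{generation2} applied with $l=1$. For the inductive step I would follow the template of the proof of Proposition \ref{PAspecial}(2): letting the subgroups $K_a$ denote the intersections of $\gamma_i(P)$ with the $A$-special subgroups of degree $d+(i-1)$ (which generate $\gamma_i(P)$ by induction), form
\[
M=\langle[K_a,P_j]\mid a,\,1\le j\le t\rangle\subseteq\gamma_{i+1}(P).
\]
Each $[K_a,P_j]$ is $A$-invariant, so Lemma \ref{FG2} decomposes it as the subgroup generated by the $[K_a,P_j]\cap C_G(A_l)$, and each of these intersections lies inside an $A$-special subgroup of $G$ of degree $d+i$. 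A normality check modelled on the corresponding step in the proof of Proposition \ref{PAspecial}(2) then shows $M\lhd P$, whence $M=\gamma_{i+1}(P)$. Iterating Proposition \ref{PAspecial}(1) finally shows that every $A$-special subgroup of $G$ of degree $d+(i-1)$ is contained in some $A$-special subgroup of degree $d$, so each intersection $\gamma_i(P)\cap H$ supplied by the stronger statement sits inside some $\gamma_i(P)\cap P_j$, yielding the required identity.

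The main obstacle is precisely the inductive step just outlined. Proposition \ref{PAspecial}(2) concerns the derived commutator $[K,K]$, with both factors drawn from the same family of generators of $K$, whereas $\gamma_{i+1}(P)=[\gamma_i(P),P]$ forces us into a mixed setting: $\gamma_i(P)$ comes equipped with generators coming from $A$-special subgroups of degree $d+(i-1)$, while $P$ is only known to be generated by intersections with $A$-special subgroups of degree $d$. The delicate point is reconciling these two distinct degrees so that the usual commutator-decomposition still produces generators lying in $A$-special subgroups of the expected degree $d+i$; Proposition \ref{PAspecial}(1) is the tool for this reconciliation, since it allows one to absorb each $P_j$ into the larger $A$-special subgroup of degree $d+(i-1)$ required to match the other side of the commutator. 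Once this bookkeeping is handled, the argument proceeds exactly as in Proposition \ref{PAspecial}(2) and the corollary follows from Lemma \ref{generation_and_product}.
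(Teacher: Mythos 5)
Your reduction coincides with the paper's: apply Lemma \ref{generation_and_product} to $P$ with the subgroups $P_1,\dots,P_t$, so everything hinges on showing $\gamma_i(P)=\langle\gamma_i(P)\cap P_j\mid 1\le j\le t\rangle$ for all $i$. The trouble is the strengthened induction you propose. The claim that $\gamma_i(P)$ is generated by its intersections with the $A$-special subgroups of $G$ of degree $d+(i-1)$ is false for $i\ge 3$: it follows from Definition \ref{DAspecial} (or from Proposition \ref{PAspecial}(3)) that an $A$-special subgroup of degree $k$ is contained in $G^{(k)}$, so your statement would force $\gamma_i(P)\le G^{(d+i-1)}$, which fails as soon as the nilpotency class of $P$ exceeds what its derived length allows. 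Concretely, take $d=0$ and let $A$ of order $q^2$ act trivially on a $p$-group $G=P$ of class $3$ and derived length $2$: every $A$-special subgroup of degree $2$ lies in $G^{(2)}=1$, while $\gamma_3(P)\neq 1$. The derived-series analogue (\ref{generation}) survives precisely because both factors of $P^{(l)}=[P^{(l-1)},P^{(l-1)}]$ carry the same degree, whereas in $\gamma_{i+1}(P)=[\gamma_i(P),P]$ the second factor only lives in degree-$d$ special subgroups; and Proposition \ref{PAspecial}(1) cannot perform the ``reconciliation'' you invoke, since it gives containments in the opposite direction — it embeds special subgroups of \emph{higher} degree into ones of \emph{lower} degree, so there is no way to absorb $P_j$ into an $A$-special subgroup of degree $d+(i-1)$.

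The paper's actual argument avoids any strengthening and instead collapses degrees downwards. Set $N_j=\gamma_i(P)\cap P_j$ and $N=\langle N_1,\dots,N_t\rangle$. Each $[N_j,P_k]$ is $A$-invariant, hence by Lemma \ref{FG2} generated by the intersections $[N_j,P_k]\cap C_G(A_l)$; such an intersection lies in $\gamma_i(P)\cap P$ and also in $[H_j,H_k]\cap C_G(A_l)$, an $A$-special subgroup of degree $d+1$, which by Proposition \ref{PAspecial}(1) is contained in some $A$-special subgroup $H$ of degree $d$. Hence $[N_j,P_k]\cap C_G(A_l)\le\gamma_i(P)\cap(P\cap H)=N_u$ for a suitable $u$, so $[N_j,P_k]\le N$ for all $j,k$ and $N$ is normal in $P$; arguing by induction on $i$ (the subgroups $[\gamma_{i-1}(P)\cap P_j,P_k]$ lie in $N$ by the same computation, and $\gamma_i(P)$ is their normal closure in $P$), one concludes $\gamma_i(P)=N$. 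The essential point you are missing is that one never needs the intersections to land in special subgroups of growing degree: degree $d+1$ is immediately pushed back down to degree $d$, and that suffices.
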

\begin{proof}
By Theorem  \ref{generation1} we have $P=\langle P_{1},\dots,P_{t}\rangle$. Since $P$ is nilpotent, in view of Lemma \ref{generation_and_product} it is sufficient to show that 
\begin{equation}
\label{eq}
\gamma_{i}(P)=\langle  \gamma_{i}(P)\cap P_{j} \mid 1\leq j \leq t\rangle,
\end{equation} 
for all $i\geq 1$. 

For $i=1$ the equality (\ref{eq}) is Theorem \ref{generation1}. 
Assume that $i\geq 2$. Set $N_{j}=\gamma_{i}(P)\cap P_{j}$ for $j=1,\ldots,t$ and $N=\langle N_j \mid 1\leq j \leq t \rangle$. 
By Lemma \ref{FG2} $[N_{j},P_{k}]$  can be generated by subgroups of the form $[N_{j},P_{k}]\cap C_{G}(A_{l})$, where $l=1,\ldots,s$ and  each of them is contained in some $N_{u}$ for suitable $u\leq t$. Indeed, on the one hand $[N_{j},P_{k}]\cap C_{G}(A_{l})$ is obviously contained in $\gamma_{i}(P)$. On the other hand it follows from  Proposition \ref{PAspecial}(1) that $[N_{j},P_{k}]\cap C_{G}(A_{l})$ is contained in some $A$-special subgroup $H$  of degree $d$. Hence 
$$
[N_{j},P_{k}]\cap C_{G}(A_{l})\leq\gamma_{i}(P)\cap (P\cap H)= \gamma_{i}(P)\cap P_{u}
$$
 for some $u\leq t$. So $[N_{j},P_{k}]\cap C_{G}(A_{l})$ is contained in some $N_{u}$ as desired.  This implies that $[N_{j},P_{k}]\leq N$ for all $j$ and $k$. Therefore $N$ is normal in $P$.  

We can now consider the quotient $P/N$ and observe that for $j=1,\ldots,t$ the image of the subgroup $\gamma_{i}(P)\cap P_{j}$ is trivial. Therefore $\gamma_i(P)\leq N$. Since the subgroup $N$ is obviously contained in $\gamma_{i}(P)$ we conclude that $N=\gamma_{i}(P)$ and we have (\ref{eq}). 
\end{proof} 

We will also require the following result that is a little stronger than Corollary \ref{generation3}.

\begin{corollary}
\label{generation4}
For all $l\geq 1$ the $l$th derived group $P^{(l)}$ is the product of the subgroups of the form $P^{(l)}\cap P_{j}$, where $j=1,\ldots,t$. 
\end{corollary}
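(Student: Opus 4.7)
The plan is to apply Lemma \ref{generation_and_product} to the nilpotent group $P^{(l)}$ with the generating collection $\{P^{(l)}\cap P_j : 1\le j\le t\}$. Note that $P^{(l)}$ is nilpotent since it is contained in the $p$-group $P$, and that $\gamma_i(P^{(l)})\cap (P^{(l)}\cap P_j)=\gamma_i(P^{(l)})\cap P_j$. Hence Corollary \ref{generation4} will follow once we establish the two conditions
\[
P^{(l)}=\langle P^{(l)}\cap P_j \mid 1\le j\le t\rangle
\]
and
\[
\gamma_i(P^{(l)})=\langle \gamma_i(P^{(l)})\cap P_j \mid 1\le j\le t\rangle \qquad (i\ge 1).
\]
The first of these is precisely Lemma \ref{generation2}, so the content lies in the second.

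I would prove the second identity by induction on $i$, transplanting the argument from the proof of Corollary \ref{generation3} and reading $P^{(l)}$ in place of $P$ throughout. The base $i=1$ is again Lemma \ref{generation2}. For the inductive step, set $N_j=\gamma_i(P^{(l)})\cap P_j$ and $N=\langle N_1,\ldots,N_t\rangle$. For every $j$ and every generator $P^{(l)}\cap P_k$ of $P^{(l)}$, the commutator $[N_j,P^{(l)}\cap P_k]$ is $A$-invariant and, since $\gamma_i(P^{(l)})$ is characteristic in $P^{(l)}$ and hence normal in $P$, sits inside $\gamma_i(P^{(l)})$. By Lemma \ref{FG2} this commutator is generated by the subgroups $[N_j,P^{(l)}\cap P_k]\cap C_G(A_l)$. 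Each of these lies in an $A$-special subgroup of degree $d+1$ (it is contained in $[H_j,H_k]\cap C_G(A_l)$ for some $A$-special $H_j,H_k$ of degree $d$), hence by Proposition \ref{PAspecial}(1) inside some $A$-special subgroup $H$ of degree $d$. Therefore
\[
[N_j,P^{(l)}\cap P_k]\cap C_G(A_l)\le \gamma_i(P^{(l)})\cap P\cap H=\gamma_i(P^{(l)})\cap P_u=N_u\le N
\]
for a suitable index $u$, so $N$ is normalized by each generator $P^{(l)}\cap P_k$ and hence by $P^{(l)}$.

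Once normality of $N$ in $P^{(l)}$ is in hand, one passes to the quotient $P^{(l)}/N$ and argues, exactly as in the last lines of the proof of Corollary \ref{generation3}, that the image of each $\gamma_i(P^{(l)})\cap P_j$ is trivial, which forces $\gamma_i(P^{(l)})\le N$. Combined with the obvious inclusion $N\le\gamma_i(P^{(l)})$, this gives the desired identity. The main delicate point I expect to be the last step: confirming $\gamma_i(P^{(l)})\le N$ after moding out by $N$ requires the inductive setup to mirror that of Corollary \ref{generation3} precisely, whereas the remaining ingredients are routine transpositions of results already proved (Lemma \ref{FG2}, Proposition \ref{PAspecial}(1), and Lemma \ref{generation2}).
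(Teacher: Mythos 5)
Your proposal is correct and follows exactly the paper's route: the paper's proof of Corollary \ref{generation4} simply invokes Lemma \ref{generation2} and then repeats the argument of Corollary \ref{generation3}, i.e.\ verifies the hypotheses of Lemma \ref{generation_and_product} for $P^{(l)}$ and the subgroups $P^{(l)}\cap P_{j}$, which is precisely what you do. The delicate final step you flag (concluding $\gamma_{i}(P^{(l)})\leq N$ after passing to the quotient) is inherited verbatim from the paper's own proof of Corollary \ref{generation3}, so your transplantation matches the paper's intended argument.
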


\begin{proof}
Recall that by Lemma \ref{generation2} we have 
$$
P^{(l)}= \langle P^{(l)} \cap P_{j} \mid 1 \leq j \leq t \rangle
$$ for all  $l\geq 1$.  By  using the same argument as in the  proof  of Corollary \ref{generation3} the result follows.
\end{proof}
 

\section{Useful Lie-theoretic machinery}

\label{Lie_machinery}
Let $L$ be a Lie algebra over a field ${\mathfrak k}$. Let $k$ be a positive integer and let $x_1,x_2,\dots,x_k$ be elements of $L$. We define inductively 
$$[x_1]=x_1;\ [x_1,x_2,\dots,x_k]=[[x_1,x_2,\dots,x_{k-1}],x_k].$$
 An element $a\in L$ is called ad-nilpotent if there exists a positive integer $n$ such that 
 $$[x,\underset{n}{\underbrace{a,\ldots,a}}]=0 \quad \text{for all}\,  x\in L.$$ 
 If $n$ is the least integer with the above property then we say that $a$ is ad-nilpotent of index $n$. Let $X\subseteq L$ be any subset of $L$. By a commutator in elements of $X$ we mean any element of $L$ that can be obtained as a Lie product of elements of $X$ with some system of brackets. 
 
 Denote by $F$ the free Lie algebra over 
${\mathfrak k}$ on countably many free generators $x_1,x_2,\dots$. Let $f=f(x_1,x_2,\dots,x_n)$ be a non-zero element of $F$. The algebra $L$ is said to satisfy the identity $f\equiv 0$ if $f(a_1,a_2,\dots,a_n)=0$ for any $a_1,a_2,\dots,a_n\in L$. In this case we say that $L$ satisfies a polynomial identity, in short, is PI.  A deep result of Zelmanov \cite{Z0}, which has numerous important applications to group theory (in particular see \cite{OS} for examples where the theorem is used), says that if a Lie algebra $L$ is PI and is generated by finitely many elements all commutators in which are ad-nilpotent, then $L$ is nilpotent. From Zelmanov's result the following theorem can be deduced \cite{KS}. 

\begin{theorem}\label{liealgbnilp}
Let $L$ be a Lie algebra over a field ${\mathfrak k}$ generated by $a_1,a_2,\dots,a_m$.\  Assume that $L$ satisfies an identity $f\equiv 0$ and that each commutator in the generators $a_1,a_2,\dots,a_m$ is ad-nilpotent of index at most $n$.\ Then $L$ is nilpotent of $\{f,n,m,{\mathfrak k}\}$-bounded class.
\end{theorem}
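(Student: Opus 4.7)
The plan is to upgrade the qualitative nilpotency conclusion of Zelmanov's theorem to a quantitative bound by passing to a single ``universal'' Lie algebra that dominates every $L$ satisfying the hypotheses. Let $F$ denote the free Lie algebra over $\mathfrak k$ on $m$ free generators $x_1,\dots,x_m$, and let $W$ denote the set of all Lie commutators in the $x_i$ under some system of brackets. I form the ideal $I \subseteq F$ generated by all elements $f(l_1,\dots,l_k)$ with $l_1,\dots,l_k \in F$, together with all elements $[y,w,w,\dots,w]$ in which $w$ ranges over $W$, $y$ ranges over $F$, and $w$ appears $n$ times. Set $U = F/I$ and write $\bar x_i$ for the image of $x_i$ in $U$.

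By construction, $U$ is generated as a Lie algebra by the $m$ elements $\bar x_1,\dots,\bar x_m$, it satisfies the polynomial identity $f \equiv 0$ (hence is PI), and every commutator in $\bar x_1,\dots,\bar x_m$ is ad-nilpotent in $U$ of index at most $n$. Thus Zelmanov's theorem quoted just before the statement applies and yields that $U$ is nilpotent. Let $c$ denote its nilpotency class. Since $U$ is determined purely by the data $f$, $n$, $m$, and $\mathfrak k$, the integer $c$ is $\{f,n,m,\mathfrak k\}$-bounded.

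To finish, let $L$ be any Lie algebra satisfying the hypotheses of the theorem. The assignment $x_i \mapsto a_i$ extends to a surjective Lie algebra homomorphism $\varphi\colon F \to L$. Because $L$ obeys the identity $f \equiv 0$, and because each commutator in $a_1,\dots,a_m$ is ad-nilpotent in $L$ of index at most $n$, every generator of $I$ lies in $\ker\varphi$; hence $\varphi$ factors through a surjection $U \twoheadrightarrow L$. Consequently $L$ is a homomorphic image of $U$ and in particular is nilpotent of class at most $c$, which is the desired bound.

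No serious obstacle is anticipated: the entire content of the proof is absorbed by Zelmanov's theorem, and the only role of the universal algebra $U$ is formal, converting a qualitative nilpotency assertion into a class bound that is uniform over all $L$ obeying the hypotheses. The only point that deserves care is verifying that $U$ really falls within the hypotheses of Zelmanov's result; this is precisely what the definition of $I$ arranges.
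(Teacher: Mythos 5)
Your argument is correct: the universal algebra $U=F/I$ does satisfy the hypotheses of Zelmanov's theorem, its class $c$ depends only on $f,n,m,\mathfrak k$, and every $L$ as in the statement is a quotient of $U$, so the bound transfers. The paper itself gives no proof, citing \cite{KS} instead, and your reduction to a single universal object is essentially the standard deduction used there, so nothing further is needed.
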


The next theorem provides an important criterion for a Lie algebra to be PI. It was proved by Bakhturin  and Zaicev for soluble groups $A$ \cite{BZ} and later extended by Linchenko to the general case \cite{LI}.

\begin{theorem}
\label{LichBak}
 Assume that a finite group $A$ acts on a Lie algebra $L$ by automorphisms in such a manner that $C_L(A)$, the subalgebra formed by fixed elements, is PI. Assume further that the characteristic of the ground field of $L$ is either 0 or prime to the order of $A$. Then $L$ is PI.
\end{theorem}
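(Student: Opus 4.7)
The plan is to reduce the theorem to a problem about graded Lie algebras and then invoke the theory of graded polynomial identities. First, I would extend scalars from ${\mathfrak k}$ to a field ${\mathfrak k}'$ containing a full set of $|A|$-th roots of unity, that is, a splitting field for the group algebra ${\mathfrak k}'A$. Since the PI property is preserved and reflected by scalar extension, and since $C_{L\otimes_{\mathfrak k}{\mathfrak k}'}(A)=C_L(A)\otimes_{\mathfrak k}{\mathfrak k}'$ remains PI, it suffices to prove the result over ${\mathfrak k}'$. Note that the characteristic hypothesis passes to ${\mathfrak k}'$ unchanged.

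Next I would handle the soluble case (the original setting of Bahturin--Zaicev) by induction on $|A|$. Given $A$ soluble with a normal subgroup $B\lhd A$ of prime index $p$, the induction hypothesis applied to the action of $B$ on $L$ reduces the problem to showing that $C_L(B)$ is PI. But $A/B$ is cyclic of order $p$ acting on $C_L(B)$, with fixed subalgebra $C_{C_L(B)}(A/B)=C_L(A)$, which is PI by hypothesis. So the whole soluble case collapses to the single ingredient of handling a cyclic action of prime order.

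In that key case, the $p$-th roots of unity in ${\mathfrak k}'$ together with the coprime-characteristic assumption produce a decomposition $L=\bigoplus_{i=0}^{p-1}L_i$ into eigenspaces for a generator of $A$; this is a $\mathbb{Z}/p$-grading whose identity component is $L_0=C_L(A)$. One then invokes the graded PI theorem for Lie algebras: a Lie algebra graded by a finite abelian group whose identity component satisfies a polynomial identity satisfies (a possibly different) identity globally. This closes the soluble case.

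Finally, the extension to non-soluble $A$ (Linchenko's contribution) cannot be reached by the grading argument alone, since a non-abelian action does not induce a grading on $L$. Instead one works with the action of the semisimple Hopf algebra ${\mathfrak k}'A$ (semisimplicity by Maschke under the coprime-characteristic hypothesis) and proves a PI-descent statement through such actions, equivalently through the smash product $L\#{\mathfrak k}'A$, together with a multilinearisation/Amitsur-type argument that converts PI identities of the invariants into identities of $L$. The \textbf{main obstacle} is precisely this last step: while the soluble reduction and the graded PI theorem are relatively standard once set up, the non-soluble extension requires genuinely Hopf-algebraic machinery and is the deepest ingredient in the proof.
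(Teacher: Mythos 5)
The paper does not prove this statement at all: Theorem \ref{LichBak} is imported verbatim from the literature, with the soluble case credited to Bakhturin--Zaicev \cite{BZ} and the general case to Linchenko \cite{LI}. So the relevant comparison is between your sketch and the proofs in those sources, and at the level of architecture your outline is faithful to them: extension of scalars to a splitting field, reduction of the soluble case by induction through a normal subgroup of prime index to a single cyclic action of prime order, the eigenspace decomposition $L=\bigoplus_{i}L_i$ realizing that action as a $\mathbb{Z}/p$-grading with $L_0=C_L(A)$, an appeal to the graded-PI theorem, and a Hopf-algebraic (smash product, Maschke-semisimple ${\mathfrak k}'A$) argument for non-soluble $A$. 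The inductive bookkeeping in your soluble reduction is also in the right order: first conclude $C_L(B)$ is PI from the prime-order case applied to $A/B$, then apply the inductive hypothesis to $B$ acting on $L$.

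The genuine gap is that the two steps carrying all the weight are invoked rather than proved, and in a way that is close to circular with respect to the statement itself. The ``graded PI theorem for Lie algebras'' is not a standard off-the-shelf fact independent of this theorem; it is the main content of the very paper \cite{BZ} being cited, so using it reduces the soluble case to the cited source rather than establishing it. Worse, in the non-soluble case the ``PI-descent statement through the action of the semisimple Hopf algebra ${\mathfrak k}'A$'' that you propose to invoke \emph{is} Linchenko's theorem: for the Hopf algebra ${\mathfrak k}'A$ the invariants are exactly $C_L(A)$, so asserting that PI passes from the invariants to $L$ under such an action is a restatement of the claim to be proved, not a tool available prior to it. A complete proof would have to actually carry out the graded argument (e.g.\ the combinatorial analysis of multilinear identities supported on the grading components) and Linchenko's smash-product/multilinearisation argument; as written, your proposal is a correct map of the literature's proof but not a proof. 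Within the present paper this is harmless, since the authors themselves treat the theorem as a quoted black box.
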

We will need a corollary of the previous result. 

\begin{corollary}[\cite{Shu}]
\label{polynomialidentity}
Let $F$  be the free Lie algebra of countable rank over $\mathfrak k$. Denote by $F^{*}$ the set of non-zero elements of $F$. For any finite group $A$ there exists a mapping 
$$\phi: F^{*}\rightarrow F^{*}$$
such that if $L$ and $A$ are as in Theorem \ref{LichBak}, and if $C_{L}(A)$ satisfies an identity $f\equiv 0$, then $L$ satisfies the identity $\phi(f)\equiv 0$.
\end{corollary}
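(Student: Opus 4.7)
The plan is to build a universal $A$-Lie algebra $M$ that is as free as possible subject to the requirement that its fixed subalgebra satisfy $f\equiv 0$, apply Theorem \ref{LichBak} to $M$, and then transport the resulting PI identity down to every other $L$ in the class by a universal-property argument. Concretely, let $X=\{x_{1},x_{2},\dots\}$ be a countable set and form the free Lie algebra $\tilde F$ over $\mathfrak k$ on $X\times A$, with the natural $A$-action $(x_{i}\cdot a)\cdot b=x_{i}\cdot(ab)$ on generators. Because the characteristic of $\mathfrak k$ is either $0$ or coprime to $|A|$, the averaging operator $e=\frac{1}{|A|}\sum_{a\in A}a$ is a well-defined $A$-equivariant projection with image $C_{\tilde F}(A)$. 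Let $I\subseteq\tilde F$ be the ideal generated by all elements $f(y_{1},\dots,y_{n})$ with $y_{1},\dots,y_{n}\in e(\tilde F)$; since the generators are $A$-fixed, $I$ is $A$-invariant. Put $M=\tilde F/I$. A short averaging argument then shows that $C_{M}(A)=(e(\tilde F)+I)/I$, and by construction every tuple in $C_{M}(A)$ annihilates $f$; hence $C_{M}(A)$ satisfies the identity $f\equiv 0$. Theorem \ref{LichBak} now applies to $M$, yielding a non-zero $h\in F^{*}$ that is an identity of $M$; set $\phi(f):=h$.

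It remains to verify that every $L$ as in the hypothesis satisfies $\phi(f)\equiv 0$. Fix arbitrary elements $a_{1},\dots,a_{k}\in L$ and define an $A$-equivariant Lie homomorphism $\psi:\tilde F\to L$ by sending $x_{i}\cdot a$ to $a_{i}\cdot a$ for $i\le k$ and to $0$ otherwise. Because $\psi$ is $A$-equivariant, the image of every element of $e(\tilde F)=C_{\tilde F}(A)$ lies in $C_{L}(A)$. Hence for any generator $f(y_{1},\dots,y_{n})$ of $I$ the image $f(\psi(y_{1}),\dots,\psi(y_{n}))$ vanishes in $L$ by the hypothesis on $C_{L}(A)$, so $\psi$ factors through the quotient $\tilde F\to M$. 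Since $h$ is an identity of $M$, we obtain $h(a_{1},\dots,a_{k})=\psi(h(x_{1},\dots,x_{k}))=0$ in $L$, as required.

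The most delicate step is the identification $C_{M}(A)=(e(\tilde F)+I)/I$: one lifts a fixed element $m\in C_{M}(A)$ to $\tilde m\in\tilde F$, notes that $\tilde m\cdot a-\tilde m\in I$ for every $a\in A$, and averages to conclude that $\tilde m-e(\tilde m)\in I$. This is exactly the step where the coprimality of $|A|$ with the characteristic is used. A minor additional subtlety is that $\phi(f)$ is a priori a polynomial over $\mathfrak k$, so in order to produce an identity that does not depend on the particular admissible base field one can either rescale $h$ to have coefficients in $\mathbb Z[1/|A|]$, or carry out the whole construction of $M$ and $h$ over the prime field and then base-change.
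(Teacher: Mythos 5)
Your construction is correct: the relatively free $A$-Lie algebra $M=\tilde F/I$ does have $C_{M}(A)=(e(\tilde F)+I)/I$ satisfying $f\equiv 0$, Theorem \ref{LichBak} then yields a nonzero identity $h$ of $M$, and your $A$-equivariant specialization argument transports $h\equiv 0$ to every $L$ in the class, so taking $\phi(f)=h$ works. The paper gives no proof of Corollary \ref{polynomialidentity} beyond citing \cite{Shu}, and your universal (relatively free algebra plus Bakhturin--Zaicev/Linchenko) argument is essentially the one used there, so there is nothing further to add.
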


Now we turn to groups and  for the rest of this section $p$ will denote a fixed prime number. 
Let $G$ be any group. A series of subgroups 
\begin{equation*}
(*)\quad \quad  G=G_{1}\geq G_{2}\geq \cdots
\end{equation*}
 is called an $N_{p}$-series if $[G_{i},G_{j}]\leq G_{i+j}$ and $G_{i}^{p}\leq G_{pi}$ for all $i,j$. With any $N_{p}$-series $(*)$ of  $G$ one can associate a Lie algebra $L^{*}(G)=\oplus L^{*}_{i}$ over  the field with $p$ elements $\F_{p}$, where we view each  $L^{*}_{i}=G_{i}/G_{i+1}$ as a linear space over $\F_{p}$. If $x \in G$, let $i=i(x)$ be the largest integer such that $x \in G_i$. We denote by $x^*$ the element $xG_{i+1}$ of $L^{*}(G)$. The following lemma tells us something about the relationship between the group $G$ and the associated Lie algebra $L^{*}(G)$.

\begin{lemma}[Lazard, \cite{L}]
\label{Laz} 
For any $x\in G$ we have $(ad\, x^*)^p=ad\, (x^p)^*$. Consequently, if $x$ is of finite order $p^{t}$, then $x^*$ is ad-nilpotent of index at most $p^{t}$.
\end{lemma}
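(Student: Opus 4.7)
The plan is to reduce the operator identity $(ad\, x^*)^p=ad\, (x^p)^*$ on $L^{*}(G)$ to a single congruence of group commutators in $G$, establish that congruence via a Hall-type collection argument, and finally obtain the ad-nilpotence statement by iteration. First I would fix $i=i(x)$, so $x^*\in L^*_i$, and invoke the $N_p$-axiom $G_i^p\le G_{pi}$ to conclude that $x^p\in G_{pi}$ and hence $(x^p)^*\in L^*_{pi}$. Since $L^{*}(G)=\bigoplus_j L^*_j$ is graded, to prove the identity it suffices to check that both sides agree on an arbitrary homogeneous element $y^*\in L^*_j$ coming from some $y\in G_j$. Unwinding the definition of the Lie bracket on $L^{*}(G)$, the element $(ad\, x^*)^p(y^*)$ is the image in $G_{j+pi}/G_{j+pi+1}$ of the iterated group commutator $[y,\underbrace{x,\ldots,x}_{p}]$, while $ad\,(x^p)^*(y^*)$ is the image of $[y,x^p]$ in the same quotient. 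Everything thus reduces to proving the congruence
\[
[y,\underbrace{x,x,\ldots,x}_{p}]\;\equiv\;[y,x^p]\pmod{G_{j+pi+1}}.
\]

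For this I would apply the classical Hall collection formula for $[y,x^n]$: iterated use of $[y,ab]=[y,b]\,[y,a]^b$ and $a^b=a\cdot[a,b]$ produces, in a suitable collected order, an expansion of the form
\[
[y,x^p]\;=\;\prod_{m=1}^{p}[y,\underbrace{x,\ldots,x}_{m}]^{\binom{p}{m}}\cdot(\text{higher-weight commutators in }y,x).
\]
The ``higher-weight'' correction factors all lie in $G_{j+(p+1)i}\subseteq G_{j+pi+1}$ by the first $N_p$-axiom $[G_a,G_b]\le G_{a+b}$. For each $1\le m\le p-1$ the coefficient $\binom{p}{m}$ is divisible by $p$, and the commutator $[y,x,\ldots,x]$ with $m$ tail copies of $x$ lies in $G_{j+mi}$; applying the second $N_p$-axiom $G_{j+mi}^{\,p}\le G_{p(j+mi)}$ together with the elementary inequality $p(j+mi)\ge j+pi+1$, which holds because $i,j\ge 1$, these intermediate factors all become trivial modulo $G_{j+pi+1}$. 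Only the term $m=p$, namely $[y,x,\ldots,x]$ with $p$ copies of $x$, survives, yielding the required congruence. The careful tracking of this filtration, in which both defining axioms of an $N_p$-series must be invoked simultaneously to force every correction term below the level $G_{j+pi+1}$, is the main technical obstacle, and it is precisely this interplay that makes the $N_p$-setting (rather than, say, the ordinary lower central series) the right framework for Lazard's identity.

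With the identity $(ad\, x^*)^p=ad\,(x^p)^*$ in hand, a straightforward induction on $k$ yields $(ad\, x^*)^{p^k}=ad\,(x^{p^k})^*$ for every $k\ge 1$. If $x$ has finite order $p^t$ then $x^{p^t}=1$ lies in every $G_n$, so its image $(x^{p^t})^*$ in $L^{*}(G)$ is the zero element. Consequently $(ad\, x^*)^{p^t}=0$ as an operator on $L^{*}(G)$, which is exactly the statement that $x^*$ is ad-nilpotent of index at most $p^t$, completing the proof.
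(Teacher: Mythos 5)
Your reduction of the operator identity to the congruence $[y,\underbrace{x,\dots,x}_{p}]\equiv[y,x^p]\pmod{G_{j+pi+1}}$ is the right first move, and your treatment of the collected terms $[y,{}_mx]^{\binom{p}{m}}$ for $1\le m\le p-1$ is correct (divisibility of $\binom{p}{m}$ by $p$ plus $G_{j+mi}^{\,p}\le G_{p(j+mi)}$ and $p(j+mi)\ge j+pi+1$). The genuine gap is in your disposal of the correction factors. In any Hall-type collection of $[y,x^p]=\prod_{k=0}^{p-1}[y,x]^{x^k}$, the terms produced by reordering are not commutators with a single entry $y$ and more than $p$ entries $x$; they are commutators involving the elements $c_m=[y,{}_mx]$ at least \emph{twice}, the lowest-degree example being $[[y,x],[y,x,x]]$, which lies only in $G_{2j+3i}$. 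Your claim that all corrections lie in $G_{j+(p+1)i}\subseteq G_{j+pi+1}$ therefore fails: for instance with $i=j=1$ and $p\ge 5$ one has $2j+3i=5<j+pi+1$, so such a term is not killed by the first $N_p$-axiom, and the congruence is not established. Handling these low-degree corrections is precisely the delicate point of Lazard's lemma: one must show that they occur with exponents divisible by suitable powers of $p$ (a careful Hall--Petrescu bookkeeping relating the $p$-adic valuation of the exponent to the filtration depth of the term), and only then invoke $G_n^{\,p}\le G_{pn}$; as written, your argument asserts this away with an incorrect degree bound.

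A standard way to avoid the bookkeeping altogether is Lazard's original route through the group algebra: filter $\F_p G$ by assigning weight $i$ to $g-1$ for $g\in G_i$, pass to the associated graded algebra, and use the two identities valid in any associative $\F_p$-algebra, namely $(a-1)^p=a^p-1$ and $(\mathrm{ad}\,a)^p=\mathrm{ad}(a^p)$ (the latter because left and right multiplications commute); the $N_p$-axioms are exactly what makes the graded comparison map compatible with brackets and $p$-th powers. Note also that the paper itself offers no proof of this lemma --- it is quoted from Lazard --- so the issue is purely the internal correctness of your argument; the final deduction of ad-nilpotence from the identity (iterate and use $x^{p^t}=1$) is fine once the identity itself is secured, modulo the harmless convention that $(x^p)^*$ be read as the image of $x^p$ in $G_{pi}/G_{pi+1}$, which may be zero.
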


Let $w=w(x_1,x_2,\dots,x_n)$ be nontrivial group-word, i.e., a nontrivial element of the free group on free generators $x_1,x_2,\dots,x_n$.  We say that $G$ satisfies the identity $w\equiv 1$ if  $w(g_1,\dots,g_n)=1$ for any $g_1,g_2,\dots,g_n\in G$. The next proposition follows from the proof of  Theorem  1 in the paper of Wilson and Zelmanov \cite{WZ}. 

\begin{proposition}
\label{WilZel}
Let $G$ be a group satisfying an identity $w\equiv 1$. Then there exists a non-zero multilinear Lie polynomial $f$ over $\F_{p}$, depending only on $p$ and $w$, such that  for any  $N_{p}$-series  $(*)$ of $G$ the corresponding algebra  $L^{*}(G)$ satisfies the identity $f\equiv 0$.
\end{proposition}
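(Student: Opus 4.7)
The plan is to extract from the group identity $w\equiv 1$ a Lie-algebra identity via the Magnus-Lazard embedding of a free group, and then show that it is inherited by $L^*(G)$ for \emph{any} $N_p$-series of $G$.

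First, let $F_r$ be the free group on the $r$ variables appearing in $w$, and let $A=\F_p\langle\langle X_1,\dots,X_r\rangle\rangle$ with augmentation ideal $\widehat I$. The Magnus map $\mu\colon F_r\to 1+\widehat I$ sending $x_i\mapsto 1+X_i$ is injective, and setting $D_i=\mu^{-1}(1+\widehat I^{\,i})$ defines an $N_p$-series of $F_r$. By the Jennings-Lazard-Zassenhaus theorem the associated restricted Lie algebra $\bigoplus_i D_i/D_{i+1}$ is canonically isomorphic to the free restricted Lie algebra on $r$ generators, realised inside $A$ as the restricted Lie subalgebra generated by $X_1,\dots,X_r$.

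Since $w\neq 1$ there is a unique $k$ with $w\in D_k\setminus D_{k+1}$, so that $\mu(w)=1+h+(\text{terms of degree}>k)$ for some nonzero homogeneous restricted Lie polynomial $h$ of degree $k$ in $X_1,\dots,X_r$. Applying a standard polarisation (substituting $X_i\mapsto \sum_j Y_{ij}$ and isolating a suitable multi-homogeneous component of total multidegree $(1,\dots,1)$) I would extract from $h$ a nonzero \emph{multilinear} Lie polynomial $f$ over $\F_p$, constructed from $p$ and $w$ alone. Next I would show that $L=L^*(G)$ satisfies $f\equiv 0$ for any $N_p$-series of $G$. By multilinearity it is enough to evaluate on homogeneous tuples $l_j^*=a_jG_{d_j+1}$ with $a_j\in G_{d_j}$. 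The key observation is that modulo higher terms of an $N_p$-series the group commutator corresponds to the Lie bracket and the $p$-th power to the restricted $p$-operation; consequently, when the Magnus expansion of $w(a_{i_1},\dots,a_{i_r})$ is imitated in the associated graded setting, its leading-degree component coincides with the Lie-algebraic evaluation of $h$ (and hence, after polarisation, of $f$) on the $l_j^*$. The relation $w\equiv 1$ in $G$ kills the entire expansion, and comparing the bottom graded components yields $f(l_1^*,\dots,l_m^*)=0$.

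The principal obstacle is this last transfer step. An arbitrary $N_p$-series provides only the inclusions $[G_i,G_j]\le G_{i+j}$ and $G_i^p\le G_{pi}$, not equalities, so pushing a free-group identity down to a genuine identity on $L$ requires a careful collection-process argument aligning the filtration of $A$ with the filtration of $G$ and verifying that nothing in the leading-degree component of $\mu(w)$ is accidentally lost or created in the associated graded. Multilinearisation is indispensable here: evaluating a merely multi-homogeneous polynomial on homogeneous elements of differing degrees would contribute to several graded components of $L$ and obscure the identification, whereas a multilinear $f$ evaluated on homogeneous elements lives in a single graded piece, where the comparison with the leading Magnus component is unambiguous.
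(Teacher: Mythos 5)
Your overall strategy (Magnus embedding, leading term of $w-1$ in the free restricted Lie algebra, multilinearisation, transfer to the associated graded algebra) is exactly the strategy of Wilson and Zelmanov, which is all the paper itself offers: Proposition \ref{WilZel} is not proved there but quoted as following from the proof of Theorem 1 of \cite{WZ}. The trouble is that your write-up stops precisely where that proof starts doing real work. The step you yourself call ``the principal obstacle'' --- transferring the identity to $L^{*}(G)$ for an arbitrary $N_{p}$-series --- is not a technical detail but the substance of the theorem, and the single-substitution argument you sketch is incorrect as stated. To evaluate the multilinear polynomial $f$ on homogeneous elements $l_j^{*}=a_jG_{d_j+1}$ you must substitute into the $r$-variable word $w$ the products $\prod_{j\in S_i}a_j$ (one block of new variables per original variable), and then the bottom weighted-degree component of the expansion of $w(\dots)-1$ is in general \emph{not} $f(l_1^{*},\dots,l_m^{*})$: monomials of $h$, or of the higher-degree part of $w-1$, which omit a variable carrying a large weight $d_j$ while repeating one of small weight can contribute in weighted degree at most $\sum_j d_j$. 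These parasitic terms are removed only by combining many substitutions --- an inclusion--exclusion (in group form, an alternating product) over the partial substitutions in which some of the $a_j$ are replaced by $1$, legitimate precisely because $w\equiv 1$ holds under \emph{all} substitutions --- together with the collection/filtration bookkeeping that compares group commutators of the $a_j$ with graded brackets when only the inclusions $[G_i,G_j]\le G_{i+j}$ and $G_i^{p}\le G_{pi}$ are available. This mechanism is the content of the Wilson--Zelmanov argument and is absent from your proposal.

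A smaller point: you assert without justification that polarisation of the leading term $h$ yields a nonzero multilinear \emph{Lie} polynomial. This is true but needs an argument in characteristic $p$: view $h$ inside the free associative algebra via the Magnus isomorphism; full multilinearisation is injective on each multihomogeneous component there (distinct monomials give disjoint sets of multilinear monomials with coefficient $1$), so the multidegree-$(1,\dots,1)$ component is nonzero, and a multilinear element of the free restricted Lie algebra involves no $p$-th powers, hence is an ordinary Lie polynomial. With that remark, and above all with the transfer step actually carried out, your outline would become the proof cited in the paper; as it stands it is a plan whose decisive step is acknowledged but not supplied.
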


In general a group  $G$ has many $N_{p}$-series; one of the most important is the so-called Jennings-Lazard-Zassenhaus series that can be defined as follows.   

Let $\gamma_j(G)$ denote the $j$th term of the lower central series of $G$.
Set $D_i=D_i(G)= \prod_{jp^{k}\geq i}\gamma_{j}(G)^{p^{k}}$. The subgroup $D_{i}$ is also known as  the $i$th-dimension subgroup of $G$ in characteristic $p$. These subgroups form an $N_{p}$-series of $G$ known as the  Jennings-Lazard-Zassenhaus series. Let $L_{i}=D_{i}/D_{i+1}$ and $L(G)=\oplus L_{i}$.  Then $L(G)$  is a Lie algebra over the field $\F_p$ (see \cite[Chapter 11]{GA} for more detail). The subalgebra of $L(G)$ generated by $L_{1}=D_1/D_2$ will be denoted by $L_p(G)$. The  next lemma is a ``finite'' version of Lazard's criterion for a pro-$p$ group to be $p$-adic analytic. The proof can be found in \cite{KS}.
\begin{lemma}
\label{PowerfulR}
Suppose that $P$ is a $d$-generator finite $p$-group such that the Lie algebra $L_{p}(P)$ is nilpotent of class $c$. Then $P$ has a powerful characteristic subgroup of $\{p,c,d\}$-bounded index.
\end{lemma}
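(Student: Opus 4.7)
The plan is to argue by contradiction using a compactness construction, and then invoke Lazard's criterion for a pro-$p$ group to be $p$-adic analytic \cite{L} together with the Lubotzky--Mann theory \cite{LM} of powerful pro-$p$ groups. Suppose the conclusion fails for some fixed $p$, $c$, $d$: then for every positive integer $n$ there is a $d$-generator finite $p$-group $P_n$ with $L_p(P_n)$ nilpotent of class at most $c$, yet every powerful characteristic subgroup of $P_n$ has index greater than $n$.

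First I would build a universal pro-$p$ object catching this family. Let $F$ be the free pro-$p$ group of rank $d$, with its Jennings-Lazard-Zassenhaus subgroups $D_i(F)$. Since each $D_i$ is verbal, the closed normal subgroup $R\leq F$ obtained by imposing the identity $[D_1,\dots,D_1]\leq D_{c+1}$ (with $c+1$ copies of $D_1$) is characteristic in $F$. The quotient $G:=F/R$ is then the largest $d$-generator pro-$p$ group whose Lie algebra $L_p$ is nilpotent of class at most $c$, and each $P_n$ is a continuous quotient of $G$.

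Next, by Lazard's criterion the nilpotency of $L_p(G)$ forces $G$ to be $p$-adic analytic. The Lubotzky--Mann theory guarantees an open powerful subgroup of $G$ of finite index, and a routine refinement produces an open \emph{characteristic} powerful subgroup: for example $G^{p^s}$ is verbal hence characteristic, and is powerful as soon as $s$ exceeds a bound depending on the analytic dimension of $G$. Call this subgroup $H$ and set $N:=[G:H]$, a number depending only on $p$, $c$, $d$. Now for any $n$, any automorphism of $P_n$ lifts through the surjection $F\twoheadrightarrow P_n$ by freeness of $F$ and, because $R$ is characteristic in $F$, descends to an automorphism of $G$. Hence the image $\overline{H}$ of $H$ in $P_n$ is characteristic, powerful (being a homomorphic image of the powerful group $H$), and of index at most $N$; choosing $n>N$ contradicts the defining property of the sequence $(P_n)$.

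The hard part is the production of a characteristic (not merely open normal) powerful open subgroup of the $p$-adic analytic group $G$ with index bounded only in terms of $p$, $c$, $d$; this is the delicate ingredient drawn from the Lubotzky--Mann structure theory, where one needs more than the bare existence of a powerful open subgroup. A secondary technical point is to verify that the relation subgroup $R\leq F$ defining $G$ is genuinely characteristic and that the quotient $G$ actually inherits nilpotency of class $\leq c$ for $L_p$, both of which follow once the dimension subgroups of $F$ are recognised as verbal.
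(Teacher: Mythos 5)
Your argument fails at its very first step: the universal group $G=F/R$ that everything else rests on does not exist. The condition ``$L_p(Q)$ is nilpotent of class at most $c$'' is not a group law; it is the inclusion $\gamma_{c+1}(Q)\leq D_{c+2}(Q)$ between two verbal subgroups (note also the index: a weight-$(c+1)$ bracket in elements of $L_1$ vanishes when the corresponding group commutator lies in $D_{c+2}$, whereas $\gamma_{c+1}\leq D_{c+1}$ holds in every group, so the condition exactly as you wrote it is vacuous). Such an inclusion is inherited by quotients, but not by subdirect products, so the kernels $N\trianglelefteq F$ with $L_p(F/N)$ nilpotent of class at most $c$ do not form a filter base and there is no least (let alone characteristic) $R$ ``imposing'' the condition. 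Concretely, take $p$ odd, $d=2$, $c=1$, and let $Q_1=\langle x_1,y_1\mid x_1^{p^2}=y_1^{p}=1,\ [x_1,y_1]=x_1^{p}\rangle$, $Q_2=\langle x_2,y_2\mid x_2^{p^2}=y_2^{p}=1,\ [x_2,y_2]=x_2^{2p}\rangle$. In both, $\gamma_2(Q_i)=\langle x_i^{p}\rangle\leq Q_i^{p}\leq D_3(Q_i)$, so $L_p(Q_i)$ is abelian. Now let $Q\leq Q_1\times Q_2$ be generated by $x=(x_1,x_2)$ and $y=(y_1,y_2)$; this is exactly $F/(N_1\cap N_2)$ for the kernels $N_i$ of the obvious epimorphisms $F\to Q_i$. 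A direct computation gives $Q^{p}=\langle (x_1^{p},x_2^{p})\rangle$ and $\gamma_2(Q)^{p}=\gamma_3(Q)=1$, hence $D_3(Q)=\langle (x_1^{p},x_2^{p})\rangle$, while $[x,y]=(x_1^{p},x_2^{2p})\notin D_3(Q)$; thus $L_p(Q)$ is not abelian. Since the condition does pass to quotients, no kernel of the family can be contained in $N_1\cap N_2$. So whichever way you define $R$, you lose one of the two properties you need: if $R$ is the intersection of all such kernels (the only reading of ``imposing the identity''), every $P_n$ is a quotient of $G$ but $L_p(G)$ need not be nilpotent of class at most $c$ -- it already fails after intersecting two kernels -- and Lazard's criterion cannot be invoked; conversely, a characteristic $R$ with $L_p(F/R)$ nilpotent of class at most $c$ cannot have all members of the class as quotients, since Gasch\"utz's lemma (any two epimorphisms from a free pro-$p$ group of rank $d$ onto a finite $p$-group needing at most $d$ generators differ by an automorphism of $F$) would then force $R\leq N_1\cap N_2$ in the example above. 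The same obstruction defeats the fallback of taking $G$ to be the closed image of $F$ in $\prod_n P_n$: nothing guarantees that its $L_p$ is nilpotent of any class.

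The later steps are comparatively minor and mostly sound for a fixed $p$-adic analytic group: images of powerful groups are powerful, $G^{p^s}$ is characteristic and, for large $s$, open and powerful, and automorphisms of $P_n$ lift (this needs $d(P_n)=d$, or Gasch\"utz again, together with the Hopfian property of finitely generated pro-$p$ groups); but none of this can be reached without the universal object, so the proposal as it stands does not prove the lemma. For comparison, the paper itself gives no proof of this statement -- it simply refers to \cite{KS} -- and any correct argument has to cope with the fact exhibited above, namely that the class of $d$-generator finite $p$-groups with $L_p$ nilpotent of bounded class is closed under quotients but is not a variety, so there is no relatively free object to compactify against.
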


Remind that powerful $p$-groups were introduced by Lubotzky and Mann in \cite{LM}.  A finite $p$-group $G$ is said to be powerful if and only if $[G,G]\leq G^{p}$ for $p \neq 2$ (or $[G,G]\leq G^{4}$ for $p=2$).  These groups have some nice properties. In particular we will use the following property: if $G$ is a powerful $p$-group  generated by elements of order $e=p^{k}$, then the exponent of $G$ is $e$. 

   Every subspace (or just an element) of $L(G)$ that is contained in $D_i/D_{i+1}$ for some $i$ will be called homogeneous. Given a  subgroup $H$ of  the group $G$, we denote by $L(G,H)$ the linear span in $L(G)$ of all homogeneous elements of the form $hD_{i+1}$, where $h\in D_{i}\cap H$.  Clearly, $L(G,H)$ is always a subalgebra of $L(G)$. Moreover, it is isomorphic with the Lie algebra associated with $H$ using the $N_{p}$-series of $H$ formed by $H_{i}=D_{i}\cap H$.  
We also set $L_{p}(G,H)=L_{p}(G)\cap L(G,H)$.  The proof of the following lemma can be found in \cite{GS}.

\begin{lemma}
\label{L(GH)}
Suppose that any Lie commutator in homogeneous elements $x_{1},\ldots,x_{r}$ of $L(G)$ is ad-nilpotent of index at most $t$.\  Let $K=\langle x_{1},\ldots,x_{r}  \rangle$ and assume that $K\leq L(G,H)$ for some subgroup  $H$ of $G$ satisfying a group identity $w\equiv 1$. Then there exists some $\{r,t,w,p\}$-bounded number $u$ such that:
$$[L(G),\underset{u}{\underbrace{K,\ldots,K}}]=0.$$  
\end{lemma}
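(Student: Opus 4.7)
The plan is to extract a polynomial identity for $K$ from the group identity satisfied by $H$, then apply Zelmanov's nilpotency theorem (Theorem \ref{liealgbnilp}) to deduce that $K$ is nilpotent of bounded class as a Lie algebra, and finally upgrade this to nilpotency of the $K$-action on all of $L(G)$.

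First I would apply Proposition \ref{WilZel} to the group $H$ equipped with the $N_{p}$-series $H_{i}=D_{i}(G)\cap H$. Since $H$ satisfies $w\equiv 1$, this yields a nonzero multilinear Lie polynomial $f$ over $\F_{p}$, depending only on $p$ and $w$, such that $L(G,H)$ satisfies $f\equiv 0$. Because $K\leq L(G,H)$ by hypothesis, the subalgebra $K$ also satisfies $f\equiv 0$. By assumption every Lie commutator in the generators $x_{1},\ldots,x_{r}$ of $K$ is ad-nilpotent of index at most $t$ on $L(G)$, hence in particular ad-nilpotent on $K$ itself. Theorem \ref{liealgbnilp} applied to $K$ then gives that $K$ is nilpotent of some $\{f,t,r,\F_{p}\}$-bounded, and therefore $\{r,t,w,p\}$-bounded, class $c$.

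Since $K$ is nilpotent of class $c$ and is generated by $x_{1},\ldots,x_{r}$, it is spanned as an $\F_{p}$-vector space by the at most $N=r+r^{2}+\cdots+r^{c}$ left-normed commutators of length $\leq c$ in the generators; call these spanning commutators $y_{1},\ldots,y_{N}$. Each $y_{j}$ is ad-nilpotent on $L(G)$ of index at most $t$ by hypothesis, so the image $\operatorname{ad}(K)$ inside $\operatorname{End}(L(G))$ is a Lie subalgebra of class $\leq c$ spanned by $N$ nilpotent operators of uniform index $\leq t$, with $N$ being $\{r,t,w,p\}$-bounded.

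The final step will be to pass from this structural description of $\operatorname{ad}(K)$ to the desired statement $[L(G),\underbrace{K,\ldots,K}_{u}]=0$, i.e., to nilpotency of the associative envelope of $\operatorname{ad}(K)$ inside $\operatorname{End}(L(G))$. I would use Poincar\'e-Birkhoff-Witt to rewrite every associative product of elements of $\operatorname{ad}(K)$ as a linear combination of ordered monomials in $\operatorname{ad}(y_{1}),\ldots,\operatorname{ad}(y_{N})$ modulo commutators that can themselves be absorbed using the nilpotency of $\operatorname{ad}(K)$ as a Lie algebra, and then exploit that each $\operatorname{ad}(y_{j})^{t}=0$ to see that sufficiently long products vanish; the required bound $u$ then emerges as a function of $N$, $c$, and $t$, hence of $r,t,w,p$. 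This last step is where I expect the main obstacle to lie, because in characteristic $p$ a linear combination of nilpotent operators need not itself be nilpotent, so the argument must genuinely combine the bounded Lie-nilpotency class of $\operatorname{ad}(K)$ with the uniform ad-nilpotency of the boundedly many spanning commutators rather than trying to argue element-by-element on $\operatorname{ad}(K)$.
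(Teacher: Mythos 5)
Your proposal follows essentially the same route as the paper's source for this lemma (the proof cited from \cite{GS}): Wilson--Zelmanov (Proposition \ref{WilZel}) gives a multilinear identity for $L(G,H)$ and hence for $K$, Theorem \ref{liealgbnilp} then bounds the nilpotency class of $K$ in terms of $r,t,w,p$, and one concludes via the standard auxiliary fact that a subalgebra of bounded class, spanned by boundedly many commutators each ad-nilpotent on $L(G)$ of index at most $t$, acts nilpotently on $L(G)$ with bounded index. Your sketch of that final step (straightening products of the spanning commutators $y_{1},\ldots,y_{N}$, absorbing the commutator terms, which remain in the span and vanish beyond weight $c$, and then using $\mathrm{ad}(y_{j})^{t}=0$ once $t$ equal factors are collected) is precisely how that auxiliary lemma is proved, so the plan is correct and matches the known argument.
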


Lemma \ref{FG1}(1) has important implications in the context of associated Lie algebras and their automorphisms. Let $G$ be a group with a coprime automorphism $a$. Obviously $a$ induces an automorphism of every quotient $D_i/D_{i+1}$. This action extends to the direct sum $\oplus D_i/D_{i+ 1}$. Thus, $a$ can be viewed as an automorphism of $L(G)$ (or of $L_p(G)$).  Set $C_i=D_i \cap C_G(a)$.\ Then Lemma \ref{FG1}(1) shows that 
\begin{equation}
\label{LandCand}
C_{L(G)}(a)=\oplus C_iD_{i+1}/D_{i + 1},
\end{equation}
and that 
\begin{equation}
\label{CLp}
C_{L_{p}(G)}(a)=L_{p}(G,C_{G}(a)).
\end{equation}
This implies that the properties of $C_{L(G)}(a)$ are very much related to those of $C_G(a)$. In particular, Proposition \ref{WilZel} shows that if $C_G(a)$ satisfies  a  certain identity, then $C_{L(G)}(a)$ is PI.

\section{Proof of the main result}
\label{derived subgroups case}

Our goal in this section is to prove that part (2) of Conjecture \ref{255} is  correct.  More precisely we have the following result.

\begin{theorem}\label{PR}
Let  m be a positive integer, $q$ a prime, and  $A$ an elementary abelian group of order $q^r$, with $r\geq2$. Suppose that $A$ acts as a coprime group of automorphisms on a finite group $G$.  If, for some integer $d$ such that $2^{d}\leq r-1$, the $d$th derived group of $C_G(a)$ has exponent dividing $m$ for any $a \in  A^{\#}$, then the $d$th derived group $G^{(d)}$ has $\{m,q,r\}$-bounded exponent.
\end{theorem}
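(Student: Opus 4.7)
The strategy is a reduction to finite $p$-groups, followed by a Lie-theoretic argument. For each prime $p$ dividing $|G^{(d)}|$, pick an $A$-invariant Sylow $p$-subgroup $P$ of $G^{(d)}$ via Lemma \ref{FG1}(2); as $G$ is a $q'$-group, $p\neq q$. It suffices to bound the exponent of $P$ in terms of $\{m,q,r\}$. By Corollary \ref{generation3}, $P=P_{1}P_{2}\cdots P_{t}$ where each $P_{j}=P\cap H_{j}$ with $H_{j}$ an $A$-special subgroup of $G$ of degree $d$, and $t$ is $\{q,r\}$-bounded. Proposition \ref{PAspecial}(4) combined with the assumption $2^{d}\leq r-1$ forces each $H_{j}$ inside $C_{G}(a_{j})^{(d)}$ for some $a_{j}\in A^{\#}$, so by hypothesis the exponent of $H_{j}$ divides $m$, and hence the exponent of $P_{j}$ divides $e$, where $e$ denotes the $p$-part of $m$.

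Next I would reduce to a bounded-generator setting. Fixing $y\in P$ and writing $y=y_{1}\cdots y_{t}$ with $y_{j}\in P_{j}$, set $Q=\langle y_{1}^{A},\ldots,y_{t}^{A}\rangle\leq P$. Then $Q$ is an $A$-invariant $p$-subgroup generated by at most $d_{0}=tq^{r}$ elements, each of order dividing $e$, and satisfies $C_{Q}(a)^{(d)}\leq C_{G}(a)^{(d)}$ of exponent dividing $m$. Since $y\in Q$, any uniform bound on the exponent of $Q$ in terms of $\{m,q,r,d_{0}\}$ will control $|y|$, and hence the exponent of $P$.

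For the Lie step, form $L=L(Q)$ and its subalgebra $L_{p}(Q)$, with the induced action of $A$. Since $C_{G}(a)^{(d)}$ has exponent dividing $m$, the group $C_{Q}(a)\leq C_{G}(a)$ satisfies the group law $\delta_{d}(x_{1},\ldots,x_{2^{d}})^{m}\equiv 1$, where $\delta_{d}$ is the $d$-fold iterated commutator word. Proposition \ref{WilZel} then yields a nontrivial Lie polynomial identity on $C_{L}(a)$ for every $a\in A^{\#}$, and Theorem \ref{LichBak} upgrades this (the $A$-action being coprime) to $L$ being PI. By Lemma \ref{Laz}, the images in $L_{p}(Q)$ of the $d_{0}$ generators of $Q$ are ad-nilpotent of index at most $e$. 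To invoke Theorem \ref{liealgbnilp} one needs the same property for every Lie commutator in these generators; this is the main obstacle, since for $g_{l_{i}}\in P_{j_{i}}$ with $j_{1}\neq j_{2}$ the commutator $[g_{l_{1}},g_{l_{2}}]$ lies in $[P_{j_{1}},P_{j_{2}}]$, which is not a priori of bounded exponent. The plan is to handle this via Lemma \ref{L(GH)} applied to the subalgebras $L(Q,Q\cap P_{j})$ (each $Q\cap P_{j}$ satisfying $x^{e}\equiv 1$), combined with the $A$-special generation of derived subgroups provided by Corollary \ref{generation4}, in order to produce bounded-index ad-annihilation on $L$ for each such commutator.

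Once $L_{p}(Q)$ is known to be nilpotent of $\{m,q,r,d_{0}\}$-bounded class $c$, Lemma \ref{PowerfulR} gives a powerful characteristic subgroup $Q_{0}\leq Q$ of $\{p,c,d_{0}\}$-bounded index. Because $Q$ is generated by elements of order dividing $e$, the Lubotzky--Mann theory of powerful $p$-groups (a powerful $p$-group generated by elements of order dividing $p^{k}$ has exponent $p^{k}$) bounds the exponent of $Q_{0}$, and hence of $Q$, in the desired parameters. This bounds the exponent of $P$ for every prime $p$ dividing $|G^{(d)}|$, so the exponent of $G^{(d)}$ is $\{m,q,r\}$-bounded, as required.
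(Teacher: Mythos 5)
Your reduction to an $A$-invariant Sylow $p$-subgroup, the decomposition $P=P_1\cdots P_t$ via Corollary \ref{generation3}, the passage to the boundedly generated subgroup $Q=\langle y_1^A,\dots,y_t^A\rangle$, and the use of Proposition \ref{WilZel} together with Theorem \ref{LichBak} to make $L$ PI all run parallel to the paper. But at the point you yourself call ``the main obstacle'' you offer only a plan, and the plan as stated does not close the gap: Lemma \ref{L(GH)} applied to the subalgebras $L(Q,Q\cap P_j)$ gives bounded ad-nilpotency only for elements lying in a single such subalgebra, whereas the problematic Lie commutators mix generators from different $P_j$ and lie in no $L(Q,Q\cap P_j)$; Corollary \ref{generation4} does not help either, because $[P_{j_1},P_{j_2}]$ is only brought under control after intersecting with some $C_G(A_k)$, and a given Lie commutator has no reason to lie in $C_L(A_k)$ for any maximal subgroup $A_k$. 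The paper's proof supplies exactly the missing mechanism: it introduces \emph{special subspaces} of $L$ mirroring the $A$-special subgroups (sets of the form $[W_1,W_2]\cap C_L(A_k)$), shows via Proposition \ref{PAspecial}(1),(4) and Lemma \ref{L(GH)} that every element of the scalar extension of a special subspace is ad-nilpotent of $\{m,q\}$-bounded index, and then extends the ground field to $\F_p[\omega]$ and replaces the generators by common eigenvectors for $A$; a commutator of common eigenvectors is again a common eigenvector, hence is centralized by some maximal subgroup of $A$ and so falls into a special subspace. Without the field extension and the eigenvector choice the ad-nilpotency hypothesis of Theorem \ref{liealgbnilp} cannot be verified, so this is an essential gap, not a routine detail.

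A second flaw is the endgame. After Lemma \ref{PowerfulR} yields a powerful characteristic subgroup $Q_0\le Q$ of bounded index, you apply the Lubotzky--Mann exponent property to $Q_0$ ``because $Q$ is generated by elements of order dividing $e$''; that is not legitimate, since $Q_0$ itself need not be generated by elements of order dividing $e$, and the powerful-group exponent statement refers to the generators of the powerful group. The paper instead applies the powerful case (Lemma \ref{powerfulth}) to the powerful subgroup, which bounds only the exponent of its $d$th derived group; it then passes to the quotient by that subgroup, so that $Q$ has bounded derived length, and uses the Section~4 product decompositions (each derived term of $Q$ being a product of its intersections with the $Q\cap P_j$, hence generated by elements of order dividing $m$) to bound the exponent of $Q$; finally, at the level of $P$ one climbs the derived series using Lemma \ref{generation2} exactly as in the paper's proof of Theorem \ref{PR}. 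Your outline would need both repairs to become a proof.
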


First we will consider the particular case where $G$ is a powerful $p$-group.
 
\begin{lemma}
 \label{powerfulth}
 Theorem \ref{PR} is valid if $G$ is powerful.
 \end{lemma}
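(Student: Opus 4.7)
The plan is to combine the generation results from Section 3 with the Lubotzky--Mann theory of powerful $p$-groups to reduce the claim to a direct computation of orders of generators. Because $G$ is a $q'$-group of bounded exponent assertion, $G$ is in fact a $p$-group for some prime $p\neq q$; I will show that the $p$-part $e=p^{v_p(m)}$ of $m$ bounds the exponent of $G^{(d)}$.

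First I would invoke Proposition \ref{PAspecial}(3) to write
$$G^{(d)} = R_d = \langle H : H \text{ an $A$-special subgroup of $G$ of degree } d\rangle.$$
By Proposition \ref{PAspecial}(4), the hypothesis $2^d\le r-1$ guarantees that every such $H$ sits inside $C_G(B)^{(d)}$ for some $B\le A$ with $|A/B|\le q^{2^d}\le q^{r-1}$. In particular $|B|\ge q^{r-2^d}\ge q$, so $B$ contains some nontrivial element $a$; then $C_G(B)\le C_G(a)$ and hence
$$H \le C_G(B)^{(d)} \le C_G(a)^{(d)},$$
which by hypothesis has exponent dividing $m$. Therefore every $A$-special subgroup of $G$ of degree $d$ has exponent dividing $m$, and $G^{(d)}$ is generated by elements of order dividing $m$. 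Since $G$ is a $p$-group, these generators have order dividing the $p$-part $e$ of $m$.

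Now I would exploit powerfulness. By the Lubotzky--Mann theory \cite{LM}, if $G$ is a powerful $p$-group then every term of its derived series is again powerful; in particular $G^{(d)}$ is powerful. A standard property of powerful $p$-groups is that, if $P$ is powerful and $P=\langle x_1,\dots,x_s\rangle$, then $P^{p^k}=\langle x_1^{p^k},\dots,x_s^{p^k}\rangle$. Applying this to $P=G^{(d)}$ with a generating set of elements of order dividing $e=p^{v_p(m)}$ yields $P^{e}=1$, so the exponent of $G^{(d)}$ divides $e$, and hence divides $m$.

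The argument is almost entirely a matter of plugging together earlier preparations; there is no substantial obstacle once one notices that the bound $2^d\le r-1$ in Proposition \ref{PAspecial}(4) is precisely what is needed to ensure that the subgroup $B$ produced there is nontrivial, so that the hypothesis on centralizers $C_G(a)$ for $a\in A^\#$ applies. The only care required is to remember that inside the $p$-group $G$ the relevant bound on generator orders is the $p$-part of $m$ rather than $m$ itself, and then to invoke the correct property of powerful $p$-groups that turns a bound on generator orders into a bound on the exponent of the whole subgroup.
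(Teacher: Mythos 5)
Your proof is correct and follows essentially the same route as the paper: Proposition \ref{PAspecial}(3) and (4) give that $G^{(d)}$ is generated by elements lying in some $C_G(a)^{(d)}$ with $a\in A^{\#}$, hence of order dividing $m$, and then the Lubotzky--Mann fact that $G^{(d)}$ is again powerful together with the power-of-generators property bounds the exponent. The only cosmetic differences are your explicit remark that the relevant bound is the $p$-part of $m$ and your (slightly garbled) justification that $G$ is a $p$-group, which really just follows from the definition of powerful groups; neither affects the argument.
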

 
 \begin{proof}
It follows from \cite[Exercise 2.1]{GA} that $G^{(d)}$ is also powerful. Furthermore, by Proposition \ref{PAspecial}(3), $G^{(d)}$  is generated by $A$-special subgroups of $G$ of degree $d$.  Since $2^{d}\leq r-1$,  Proposition \ref{PAspecial}(4) shows that any $A$-special subgroup $H$ of $G$ of degree $d$ is contained in $C_{G}(B)^{(d)}$ for some nontrivial subgroup $B\leq A$ and so $H$  is also contained in $C_{G}(a)^{(d)}$ for some $a\in A^{\#}$.  This implies that $G^{(d)}$ is generated by elements of order dividing $m$, and so it follows from \cite[Lemma 2.5]{GA} that the exponent of $G^{(d)}$ divides $m$. 
 \end{proof}

We will now handle  the case of an arbitrary $p$-group. The Lie-theoretic techniques that we have described in Section \ref{Lie_machinery} will play a fundamental role in the subsequent arguments. 

\begin{lemma}
 \label{pgroupth}
 Theorem \ref{PR} is valid if $G$ is a $p$-group.
 \end{lemma}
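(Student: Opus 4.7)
My plan is to use the Lie-theoretic machinery of Section \ref{Lie_machinery} to reduce to the powerful case already handled in Lemma \ref{powerfulth}. By Proposition \ref{PAspecial}(3), $G^{(d)}$ is generated by the $\{q,r\}$-boundedly many $A$-special subgroups of $G$ of degree $d$; by Proposition \ref{PAspecial}(4) combined with the hypothesis $2^{d}\leq r-1$, each such subgroup is contained in $C_{G}(a)^{(d)}$ for some $a\in A^{\#}$ and therefore has exponent dividing $m$. In particular $C_{G}(A)^{(d)}$ has exponent dividing $m$, so $C_{G}(A)$ satisfies the group identity $\delta_{d}^{m}\equiv 1$, where $\delta_{d}$ is the iterated commutator word defining the $d$th derived group.

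Form the Lie algebra $L=L(G)$ associated with the Jennings--Lazard--Zassenhaus series of $G$, on which $A$ acts. By Proposition \ref{WilZel} applied to $C_{G}(A)$ together with the identification $C_{L}(A)=L(G,C_{G}(A))$ from (\ref{LandCand}), the fixed subalgebra $C_{L}(A)$ satisfies a non-trivial polynomial identity depending only on $p$, $m$ and $d$. Since the $A$-action is coprime to the characteristic of $L$, Theorem \ref{LichBak} then forces $L$ itself to be PI.

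For every element $h$ lying in one of the $A$-special subgroups of degree $d$ we have $h^{m}=1$, so by Lemma \ref{Laz} the corresponding homogeneous element $h^{*}\in L$ is ad-nilpotent of index at most the $p$-part of $m$. Using Lemma \ref{L(GH)} (with $H$ running over the $A$-special subgroups of degree $d$, which all satisfy $x^{m}\equiv 1$) together with Theorem \ref{liealgbnilp}, I then aim to conclude that $L_{p}(G)$ is nilpotent of $\{m,q,r\}$-bounded class. The subtlety here is that Zelmanov's theorem bounds the class in terms of the \emph{number} of generators of the subalgebra, whereas $L_{p}(G)$ may a priori require unboundedly many. I expect to handle this by a standard minimal-counterexample reduction to the case in which $G$ is $A$-generated by a $\{q,r\}$-bounded number of elements drawn from the $A$-special subgroups; the corresponding generators of $L_{p}(G)$ are then ad-nilpotent of bounded index, and all Lie commutators in them remain in the relevant $L(G,H)$, so Lemma \ref{L(GH)} applies.

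Once $L_{p}(G)$ is nilpotent of bounded class, Lemma \ref{PowerfulR} produces an $A$-invariant powerful characteristic subgroup $P\leq G$ of $\{m,q,r\}$-bounded index. By Lemma \ref{powerfulth}, $P^{(d)}$ has exponent dividing $m$, and since $P$ is normal of bounded index in $G$ a routine argument lifts the exponent bound from $P^{(d)}$ to $G^{(d)}$, completing the proof. The main obstacle will be the third step---coordinating the PI property, the ad-nilpotency, and the finite-generation reduction so as to obtain a uniform class bound for $L_{p}(G)$ from data given only on the $A$-special subgroups of degree $d$, rather than on the whole of $C_{G}(a)$ as in the treatment of Theorem \ref{q2}.
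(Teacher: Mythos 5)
The critical gap is exactly at the point you defer: the claim that, after reducing to boundedly many generators drawn from the $A$-special subgroups of degree $d$, all Lie commutators in these generators ``remain in the relevant $L(G,H)$, so Lemma~\ref{L(GH)} applies''. This is not true as stated: if $u\in L(G,H_1)$ and $v\in L(G,H_2)$ for two \emph{different} $A$-special subgroups $H_1,H_2$ of degree $d$, the bracket $[u,v]$ need not lie in $L(G,H)$ for any subgroup $H$ of exponent dividing $m$, so neither Lemma~\ref{Laz} nor Lemma~\ref{L(GH)} gives ad-nilpotency of such commutators, and Theorem~\ref{liealgbnilp} cannot be invoked; coordinating this is the actual content of the proof, not a technicality. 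The paper handles it as follows: fix $x\in G^{(d)}$, write $x=x_1\cdots x_t$ with $x_j\in G_j$ via the product decomposition $G^{(d)}=G_1\cdots G_t$ of Corollary~\ref{generation3}, and pass to the $A$-invariant subgroup $Y=\langle x_1^A,\ldots,x_t^A\rangle$ with a $\{q,r\}$-bounded number of generators (this element-by-element reduction, not a minimal counterexample, is what yields bounded generation, since the aim is to bound the order of one element); then extend scalars to $\F_p[\omega]$, choose generators of $\overline{L_p(Y)}$ that are common eigenvectors for $A$, and use that a commutator of common eigenvectors is again a common eigenvector, hence centralized by some maximal subgroup of $A$; by the recursive definition of special subspaces and Proposition~\ref{PAspecial}(1), such a commutator lies in $\overline{W}$ for a subspace $W$ corresponding to a subgroup of an $A$-special subgroup of degree $d$, hence of some $C_G(a)^{(d)}$ of exponent dividing $m$, and Lemma~\ref{L(GH)} is then applied to the linear combinations of homogeneous elements created by the field extension. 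Without the field extension, the eigenvector generators, and the closure property of Proposition~\ref{PAspecial}(1), your third step does not go through.

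Your final step is also not routine as stated. Knowing that a powerful characteristic subgroup $P$ of bounded index has $\exp P^{(d)}$ dividing $m$ does not by itself bound $\exp G^{(d)}$: already for $d=1$, the $2$-group $G=(C_{2^n}\times C_{2^n})\rtimes C_2$, with the involution swapping the factors, has an abelian (hence powerful) subgroup of index $2$, while $G'\cong C_{2^n}$ has unbounded exponent. What the paper does instead is pass to $Y/K^{(d)}$, where $K$ is the powerful subgroup supplied by Lemma~\ref{PowerfulR}, so that the quotient has $\{m,q,r\}$-bounded derived length, and then use the generation results of Section~\ref{generation_results} (Lemma~\ref{generation2}, Corollary~\ref{generation4}) to see that every term of the derived series of $Y$ is generated by elements of order dividing $m$; climbing the boundedly long derived series then bounds $\exp Y$ and hence the order of $x$. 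Some such use of the generation machinery for the derived series is indispensable at this stage of your argument as well.
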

\begin{proof} Assume that $G$ is a $p$-group. By Corollary \ref{generation3} we have
\begin{equation}
\label{producto}
G^{(d)}=G_{1}G_{2}\cdots G_{t},
\end{equation}
 where each $G_{j}$ is an $A$-special subgroup of $G$ of degree $d$.  It is clear that the number $t$ is $\{q,r\}$-bounded.  

 Let $x$ be any element of $G^{(d)}$. In view of  (\ref{producto}) we can write $x=x_{1}x_{2}\cdots x_{t}$, where each $x_{j}$ belongs to $G_{j}$. Since $2^{d}\leq r-1$, by Proposition \ref{PAspecial}(4) each $G_{j}$ is contained in  $C_{G}(B)^{(d)}$ for some subgroup $B\leq A$ such that $|A/B|\leq q^{2^{d}}$. Thus each $x_{j}$ is contained in some $C_{G}(a)^{(d)}$ for a suitable $a\in A^{\#}$.  
 
Let $Y$ be the subgroup of $G$ generated by the orbits $x_{j}^{A}$ for $j=1,\ldots,t$. Each orbit contains at most $q^{r-1}$ elements so it follows that $Y$ has at most $q^{r-1}t$ generators, each of order dividing $m$. Since $x\in Y$ and we wish to bound the order of $x$, it is enough to show that the exponent of $Y$ is $\{m,q,r\}$-bounded.  

Set $Y_{j}= G_{j}\cap Y$ for $j=1,\dots,t$ and note that  every $Y_{j}\leq C_{G}(a)^{(d)}$ for a suitable $a\in A^{\#}$. 
 Since $Y=\langle x_{1}^{A},\ldots, x_{t}^{A}\rangle$ and every $G_{j}$ is an $A$-invariant subgroup we have $Y=\langle Y_{1},\ldots,Y_{t}\rangle$.  By applying Lemma \ref{generation_and_product}  we see that  $Y=Y_{1}Y_{2}\cdots Y_{t}$. 

Let $L=L_{p}(Y)$ and let $V_{1},\ldots,V_{t}$ be the images of $Y_{1},\ldots,Y_{t}$ in $Y/\Phi(Y)$.  It follows that  the Lie algebra $L$ is generated by $V_{1},\ldots,V_{t}$.  

Let $W$ be a subspace of $L$. We say that $W$ is a \emph{special subspace} of weight $1$ of $L$ if and only if $W=V_{j}$ for some $j\leq t$ and say that $W$ is a special subspace of weight $\varphi\geq 2$ if $W=[W_{1},W_{2}]\cap C_{L}(A_{k})$, where $W_{1},W_{2}$ are some special subspaces of $L$ of weight $\varphi_{1}$ and $\varphi_{2}$ such that $\varphi_{1}+\varphi_{2}=\varphi$ and $A_{k}$ is some maximal subgroup of $A$ for a suitable $k$.

We wish to show that  every special subspace $W$ of $L$  corresponds to a subgroup of an $A$-special subgroup of $G$ of degree $d$. We argue by induction on the weight $\varphi$. If $\varphi=1$, then $W=V_{j}$  and so $W$ corresponds to $Y_{j}$ for some $j\leq t$.  Assume that $\varphi\geq 2$ and  write $W=[W_{1},W_{2}]\cap C_{L}(A_{k})$. By induction we know that $W_{1}, W_{2}$ correspond respectively to some $J_{1},J_{2}$ which are subgroups of some  $A$-special subgroups of  $G$ degree $d$. Note that $[W_{1},W_{2}]$ is contained in the image of $[J_{1},J_{2}]$. This implies that the special subspace $W$ corresponds to a subgroup of $[J_{1},J_{2}]\cap C_{G}(A_{k})$ which, by Proposition \ref{PAspecial}(1), is contained in some $A$-special subgroup of $G$ of degree $d$, as desired.  Moreover it follows from Proposition \ref{PAspecial}(4) that  every element   of  $W$ corresponds to some element of $C_{G}(a)^{(d)}$ for some $a\in A^{\#}$ and so, by Lemma \ref{Laz}, it is ad-nilpotent of index at most $m$.

From the previous argument we  deduce that  $L=\langle V_{1}, \ldots, V_{t}\rangle$ is generated by ad-nilpotent elements of index at most $m$ but we cannot claim that every Lie commutator in these generators is again  in some special subspace of $L$ and hence it is ad-nilpotent  of bounded index. To overcome this difficulty we  extend the ground field $\F_{p}$ by a primitive $q$th root of unity $\omega$ and put $\overline{L}=L\otimes\F_{p}[\omega]$. We view $\overline{L}$ as a Lie algebra over $\F_{p}[\omega]$ and it is natural to identify $L$ with the $\F_{p}$-subalgebra $L\otimes 1$ of $\overline{L}$.
In what follows we write $\overline{X}$ to denote $X\otimes \F_{p}[\omega]$ for some subspace $X$ of $L$. Note that if an element $x \in L$ is ad-nilpotent, then the ``same" element $x \otimes 1$ is also ad-nilpotent in $\overline{L}$. We will say that an element of $\overline{L}$ is homogeneous if it belongs to $\overline{S}$ for some homogeneous subspace $S$ of $L$. 

Let $W$ be a special subspace of $L$. We claim that
 
 \begin{equation}
\label{eigenvectorsadnilp}
\begin{aligned}
& \text{there exists an}\, \{m,q\}\text{-bounded number}\, u\, \text {such that every}\\
& \text{element}\, w\, \text{of}\, \overline{W}\, \text{is ad-nilpotent of index at most}\, u. \\ 	
\end{aligned}
\end{equation}
Since $w$ is a homogeneous element of $\overline{L}$ it can be written as 
$$
w=l_{0}\otimes1+l_{1}\otimes \omega+\cdots+l_{q-2}\otimes \omega^{q-2},
$$ 
for suitable homogeneous elements  $l_{0},\ldots,l_{q-2}$ of $W$.  The elements $l_{0},\ldots,l_{q-2}$ correspond to some $x_{0},\ldots,x_{q-2}$ of $Y$  that  belong to some $A$-special subgroup of degree $d$ and so in particular $x_{0},\ldots,x_{q-2}$ are   elements of $C_{G}(a)^{(d)}$ for some $a\in A^{\#}$. Set $H=\langle x_{0},\ldots,x_{q-2}\rangle$ and $K=\langle l_{0},\ldots, l_{q-2}\rangle$. Since $H$ has exponent $m$ and $K\leq L_{p}(Y, H)$, Lemma \ref{L(GH)} shows that there exists an $\{m,q\}$-bounded number $u$ such that 
\begin{equation}
\label{Kequ}
[L, \underset{u}{\underbrace{K,\ldots, K}}]=0
\end{equation}  
 Obviously (\ref{Kequ}) implies  that 
 \begin{equation}
 \label{Kbar}
 [\overline{L}, \underset{u}{\underbrace{\overline{K},\ldots, \overline{K}}}]=0.
 \end{equation} 
Since $w$ lies in $\overline{K}$,  (\ref{eigenvectorsadnilp}) follows.  

The group $A$ acts naturally on $\overline{L}$ and now the ground field is a splitting field for $A$. Since $Y$ can be generated by at most $q^{r-1}t$ elements, we can choose elements $v_{1},\ldots, v_{s}$ in $\overline{V_{1}}\cup \cdots \cup \overline{V_{t}}$ with $s\leq q^{r-1}t$  that generate the Lie algebra $\overline{L}$ and each of them is a common eigenvector for all transformations from $A$.

Now let $v$ be any Lie commutator  in $v_{1},\ldots,v_{s}$. We wish to show that $v$ belongs to some $\overline{W}$, where $W$ is a special subspace of $L$.  If $v$ has weight $1$ there is nothing to prove. Assume $v$ has weight at least 2. Write $v=[w_{1},w_{2}]$ for some $w_{1}\in \overline{W_{1}}$ and $w_{2}\in \overline{W_{2}}$, where $W_{1}, W_{2}$ are two special subspaces of $L$ of smaller weights. It is clear that  $v$ belongs to $[\overline{W_{1}},\overline{W_{2}}]$. Note that any commutator in common eigenvectors is again a common eigenvector. Therefore $v$ is a common eigenvector and it follows that there exists some maximal subgroup $A_{l}$ of $A$ such that $v\in C_{\overline{L}}(A_{l})$. Thus $v\in [\overline{W_{1}},\overline{W_{2}}]\cap C_{\overline{L}}(A_{l})$. Hence $v$ lies in  $\overline{W}$, where $W$ is the special subspace of $L$ of the form $[W_{1},W_{2}]\cap C_{L}(A_{l})$ and so by (\ref{eigenvectorsadnilp}) $v$ is ad-nilpotent of bounded index.
This proves that
\begin{multline}
 \label{commutatorsvi}
 \text{any commutator in}\,  v_{1},\ldots,v_{s}\, \text{is ad-nilpotent of index at most}\, u. 
  \end{multline}

 Remind that $C_{L}(a)=L_{p}(Y,C_{Y}(a))$. Proposition \ref{WilZel} shows that $C_{L}(a)$ satisfies a multilinear polynomial identity  of $\{m,q\}$-bounded degree. This also holds in $C_{\overline{L}}(a)=\overline{C_{L}(a)}$. Therefore Corollary \ref{polynomialidentity} implies that $\overline{L}$ satisfies a polynomial identity of $\{m,q\}$-bounded degree. Combining  this with (\ref{eigenvectorsadnilp}) and  (\ref{commutatorsvi})  we are now able  to apply Theorem \ref{liealgbnilp}. Thus $\overline{L}$ is nilpotent of $\{m,q,r\}$-bounded class and the same holds for $L$.   

Since $Y$ is a $p$-group and $L=L_{p}(Y)$ is nilpotent of bounded class,  it follows  from  Lemma \ref{PowerfulR}  that $Y$ has a characteristic powerful subgroup $K$ of $\{m,q,r\}$-bounded index. By Lemma \ref{powerfulth} $K^{(d)}$ has bounded exponent  and so we can pass to the quotient   $Y/K^{(d)}$  and assume that $Y$ is of $\{m,q,r\}$-bounded derived length.  We now recall that $Y=Y_{1}Y_{2}\ldots Y_{t}$ and  each $Y_{j}$ is contained is some $G_j$. From the results obtained in Section 4 also each derived group $Y^{(i)}$ is a product of subgroups of the form $Y^{(i)}\cap Y_j$. Thus every $Y^{(i)}$ can be generated by elements whose orders divide $m$. Since the derived length of $Y$ is bounded, we conclude that $Y$ has $\{m,q,r\}$-bounded exponent, as required.   
\end{proof}

Finally we are ready to complete the proof of Theorem \ref{PR}. 

\begin{proof}[Proof of Theorem \ref{PR}]
Note that it suffices to prove that there is a bound, depending only on $m,q$ and $r$, on the exponent of  a Sylow $p$-subgroup of $G^{(d)}$  for each prime $p$.  

Indeed, let $\pi(G^{(d)})$ be the set of prime divisors of  $|G^{(d)}|$. Choose $p\in \pi(G^{(d)})$. It follows from Lemma \ref{FG1}(2) that $G^{(d)}$ possesses an $A$-invariant Sylow $p$-subgroup, say $P$.  By  Corollary \ref{generation3},  $P=P_{1}P_{2}\cdots P_{t}$, where each $P_{j}$ is of the form $P\cap H$ for some $A$-special subgroup $H$ of $G$ of degree $d$.  Combining this fact with Proposition \ref{PAspecial}(4) we see that each $P_{j}$ is contained in $C_{G}(B)^{(d)}$ for a suitable subgroup $B$ of $A$ and thus  $P_{j}\leq C_{G}(a)^{(d)}$, for some $a\in A^{\#}$. Since the exponent of $C_{G}(a)^{(d)}$ divides $m$,  so does $p$.     
  
From Lemma \ref{pgroupth} we know that  $P^{(d)}$ has $\{m,q,r\}$-bounded  exponent.  Moreover  by Lemma \ref{generation2} the subgroup $P^{(d-1)}$ is generated by subgroups of the form $P^{(d-1)}\cap P_{j}$, for $j=1,\ldots,t$, so in particular  $P^{(d-1)}$ is generated by elements of order dividing $m$.   Since  $P^{(d)}=(P^{(d-1)})'$  has bounded exponent it is clear that also the exponent of $P^{(d-1)}$ is $\{m,q,r\}$-bounded.  Repeating the same argument several times we see that  all subgroups $P^{(d-2)},\ldots,P'$ and $P$ are generated by elements whose orders divide $m$ and so we conclude that $P$ has $\{m,q,r\}$-bounded exponent, as desired. This completes the proof. 
\end{proof}

\section{The other part of the conjecture}
\label{gamma case}

In this last section we will  deal with part (1) of Conjecture \ref{255}. The proof of that part is similar to that of part (2) but in fact it is easier. Therefore we will not give a detailed proof here but rather describe only steps where the proof of part (1) is somewhat different from that of part (2).

The definition of $A$-special subgroups of $G$ needs to be modified in the following way.

\begin{definition}
\label{gammaAspecial}
Let $A$ be an elementary abelian $q$-group acting on  a finite $q'$-group $G$.
Let $A_{1},\ldots,A_{s}$ be the subgroups of index $q$ in $A$ and $H$ a subgroup of $G$. 
\begin{itemize}
\item
We say that $H$ is a $\gamma$-$A$-special subgroup of $G$ of degree $1$ if and only if $H=C_{G}(A_{i})$ for suitable $i\leq s$.  

\item Suppose that $k\geq 2$ and the $\gamma$-$A$-special subgroups of $G$ of degree $k-1$ are defined. Then $H$ is a $\gamma$-$A$-special subgroup of $G$ of degree $k$ if and only if there exists a $\gamma$-$A$-special subgroup $J$ of $G$ of degree $k-1$ such that  $H=[J,C_{G}(A_{i})]\cap C_{G}(A_{j})$ for suitable $i,j\leq s$.  
\end{itemize}
 \end{definition}
 
The next proposition is similar to Proposition \ref{PAspecial}.

\begin{proposition}
\label{gammaPAspecial}
Let $A$ be an elementary abelian $q$-group of order $q^{r}$ with $r\geq 2$ acting on  a finite $q'$-group $G$
and $A_{1},\ldots,A_{s}$ the maximal subgroups of $A$. Let $k\geq 1$ be an integer. 

\begin{enumerate}
\item If $k\geq 2$, then every $\gamma$-$A$-special subgroup of $G$ of degree $k$ is contained in some $\gamma$-$A$-special subgroup of $G$ of degree $k-1$.

\item Let $R_{k}$ be the subgroup generated by all $\gamma$-$A$-special subgroups of $G$ of degree $k$. Then $R_{k}=\gamma_{k}{(G)}$.
\item If $k\leq r-1$ and $H$ is a $\gamma$-$A$-special subgroup of $G$ of degree $k$, then $H\leq \gamma_{k}(C_{G}(B))$ for some subgroup  $B\leq A$ such that $|A/B|\leq q^{k}$.

\item Suppose that $G=G'$ and let $N$ be an $A$-invariant subgroup such that $N=[N,G]$. Then for every $k\geq 1$ the subgroup $N$ is generated by subgroups of the form  $N\cap H$, where $H$ is some $\gamma$-$A$-special subgroup of $G$ of degree $k$.

\item Let $H$ be a $\gamma$-$A$-special subgroup of $G$. If $N$ is an $A$-invariant normal subgroup of $G$, then the image of $H$ in $G/N$ is a $\gamma$-$A$-special subgroup of $G/N$.  

 \end{enumerate}
  
\end{proposition}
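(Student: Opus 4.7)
The proof follows the template of Proposition \ref{PAspecial}, and I would treat the parts in the order (1), (5), (3), (2), (4), with (2) being the main new step. Part (1) is a straightforward induction on $k$: for $k=2$, $H=[J,C_G(A_i)]\cap C_G(A_j)$ is contained in the degree-$1$ subgroup $C_G(A_j)$; for $k\geq 3$, induction embeds $J$ in a degree-$(k-2)$ $\gamma$-$A$-special $L$, whence $[L,C_G(A_i)]\cap C_G(A_j)$ is a degree-$(k-1)$ $\gamma$-$A$-special containing $H$. Part (5) is immediate from Lemma \ref{FG1}(1), since images commute with the intersections and commutators in the definition. Part (3) proceeds inductively: for $k=1$ take $B=A_i$; for $k\geq 2$, induction gives $B'\leq A$ with $|A/B'|\leq q^{k-1}$ and $J\leq\gamma_{k-1}(C_G(B'))$, and then $B=B'\cap A_i$ satisfies $|A/B|\leq q^k$ and $H\leq[J,C_G(A_i)]\leq[\gamma_{k-1}(C_G(B)),C_G(B)]\leq\gamma_k(C_G(B))$.

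For (2), I induct on $k$. The base case $k=1$ is Lemma \ref{FG2}. For $k\geq 2$, by induction $R_{k-1}=\gamma_{k-1}(G)=\langle D_1,\dots,D_{s_{k-1}}\rangle$, where the $D_i$ are the $\gamma$-$A$-specials of degree $k-1$; set
\[
M=\langle\,[D_i,C_G(A_l)]\mid 1\leq i\leq s_{k-1},\ 1\leq l\leq s\,\rangle.
\]
Each $[D_i,C_G(A_l)]$ is $A$-invariant, and Lemma \ref{FG2} decomposes it into intersections with the $C_G(A_j)$, which are precisely the $\gamma$-$A$-specials of degree $k$; hence $M=R_k$, while $M\leq\gamma_k(G)$ is immediate. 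The crucial point is to prove $M\triangleleft G$: for $c\in C_G(A_m)$ and $g=g_1\cdots g_n\in M$ with each $g_j$ in some degree-$k$ $\gamma$-$A$-special $H_{i_j}$, part (1) places $H_{i_j}$ inside a degree-$(k-1)$ subgroup $D_{l_j}$, so $[g_j,c]\in[D_{l_j},C_G(A_m)]\leq M$; the expansion $[g,c]=\prod_j[g_j,c]^{g_{j+1}\cdots g_n}$ then lies in $M$ because $M$ is closed under conjugation by its own elements. Hence $[M,C_G(A_m)]\leq M$ for every $m$, so $M\triangleleft G$. In $\overline G=G/M$, each $\overline{D_i}$ commutes with each $\overline{C_G(A_l)}$ since $[D_i,C_G(A_l)]\leq M$, and as these generate $\overline{\gamma_{k-1}(G)}$ and $\overline G$ respectively, $\overline{\gamma_{k-1}(G)}$ is central in $\overline G$; this yields $\gamma_k(G)\leq M=R_k$.

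Part (4) follows the template of Proposition \ref{PAspecial}(5). Setting $M_N=\langle N\cap H\mid H$ a $\gamma$-$A$-special of degree $k\,\rangle$ and using induction on $k$ together with (1) to write generators of $M_N$ as elements of the subgroups $L_j=N\cap K_j$ (with the $K_j$ of degree $k-1$), an argument parallel to the one in (2)---now exploiting that $[L_j,C_G(A_m)]\leq[K_j,C_G(A_m)]\cap N$, whose $A$-decomposition lies in degree-$k$ $\gamma$-$A$-specials---shows $M_N\triangleleft G$. Passing to $\overline G=G/M_N$, each $\overline{L_j}$ commutes with every $\overline{C_G(A_m)}$, so $\overline N$ is central in $\overline G$; combined with $N=[N,G]$, this gives $N\leq M_N$. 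The main obstacle throughout is the normality of $M$ in (2) and $M_N$ in (4): the generators $[D_i,C_G(A_l)]$ are a priori not stable under conjugation by arbitrary elements of $G$. Part (1) is the essential input, since it places every degree-$k$ subgroup inside some $D_l$, which allows us to control $[g_j,c]$ and then absorb the conjugation inherent in the commutator expansion back into $M$.
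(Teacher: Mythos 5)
Your proof is correct and follows essentially the approach the paper intends: the paper omits the argument, stating only that Proposition \ref{gammaPAspecial} is analogous to Proposition \ref{PAspecial}, and your treatment is exactly that adaptation --- parts (1), (3), (5) mirror their derived-series counterparts, while in (2) and (4) you correctly replace the role of Proposition \ref{PAspecial}(2) by showing the subgroup generated by the $[D_i,C_G(A_l)]$ (resp.\ by the $N\cap H$) is normalized by the generators $C_G(A_m)$ of $G$ via Proposition \ref{gammaPAspecial}(1), Lemma \ref{FG2} and the commutator expansion, and then passing to the quotient where $\gamma_{k-1}(G)$ (resp.\ $N$) becomes central. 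The only (harmless) deviation is stylistic: in (4) you argue directly that $N=[N,G]\leq M_N$ rather than by the contradiction used in the paper's proof of Proposition \ref{PAspecial}(5), and your argument does not in fact need the hypothesis $G=G'$.
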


The above properties of $\gamma$-$A$-special subgroups are essential in the proof  
of the following generation result, which is analogous  to Theorem \ref{generation1}.   
\begin{theorem}
\label{gamma_generation1}
Assume $r\geq 2$. Let $P$  be an $A$-invariant Sylow $p$-subgroup of $\gamma_{r-1}(G)$. Let $P_{1},\ldots,P_{t}$ be all the subgroups of the form $P\cap H$ where $H$ is some $\gamma$-$A$-special subgroup of $G$ of degree $r-1$. Then $P=\langle P_{1},\dots,P_{t}\rangle$.
\end{theorem}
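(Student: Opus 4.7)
My plan is to mirror the proof of Theorem \ref{generation1}, replacing $A$-special subgroups by $\gamma$-$A$-special subgroups, invoking Proposition \ref{gammaPAspecial} in place of Proposition \ref{PAspecial}, and taking the distinguished depth to be $r-1$ (corresponding to $\gamma_{r-1}$). The overall skeleton is: first prove the theorem when $G$ is a direct product of nonabelian simple groups, and then run a minimal counterexample argument on an arbitrary $G$, analyzing a minimal $A$-invariant normal subgroup $N$.

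For the simple groups case I would adapt Lemma \ref{product_of_simple}. Reducing as in the original argument to the situation where $A$ acts faithfully and transitively on the simple factors $S_1,\ldots,S_m$, and letting $B$ be the stabilizer of $S_1$, the centralizers $C_G(a)$ for $a\in A^\#$ are either diagonal subgroups inside a product of simple factors belonging to a single $a$-orbit or direct products of such diagonal subgroups (in the latter case one invokes Thompson's theorem to show $C_{S_j}(a)\neq 1$, exactly as in the original argument). In every subcase $C_G(a)$ is perfect, so $\gamma_{r-1}(C_G(a))=C_G(a)$. Using the same identities $[C_{S_j}(a),S_j]=S_j$ and the diagonal description of $C_G(b)$ that featured in the original proof, one shows that every centralizer $C_G(c)$ can be written in the form $[J,C_G(A_i)]\cap C_G(A_j)$ with $J$ a $\gamma$-$A$-special subgroup of smaller degree, so by induction $C_G(c)$ is $\gamma$-$A$-special of every degree up to $r-1$; Lemma \ref{FG2} then delivers the result.

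For the general case, let $G$ be a minimal counterexample and $N$ a minimal $A$-invariant normal subgroup. Setting $X=\langle P_1,\ldots,P_t\rangle$, Proposition \ref{gammaPAspecial}(5) combined with the minimality of $G$ applied to $G/N$ yields $PN=XN$, so it suffices to show $P\cap N\leq X$. If $N$ is a $p'$-group this is trivial; if $N$ is perfect, then $N$ is a product of nonabelian simple groups and the case just treated provides $P\cap N\leq X$. If $N$ is an elementary abelian $p$-group and $G\neq G'$, I would use induction to pass to a quotient in which $\gamma_r(G)=1$; then $\gamma_{r-1}(G)$ is central, hence abelian, so $\gamma_{r-1}(G)$ splits as $P$ times a $p'$-part, and Proposition \ref{gammaPAspecial}(2) forces $P=X$. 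Finally, if $G=G'$, minimality of $N$ gives either $N=[N,G]$ or $N\leq Z(G)$. In the first case $P\cap N=N$, and Proposition \ref{gammaPAspecial}(4) provides $N\leq X$. In the second case $N$ has order $p$; if $N\leq\Phi(P)$, then $P=XN$ collapses to $P=X$ by the Frattini argument, whereas if $N$ is complemented in $P$, then Gasch\"utz's Theorem \ref{gaschutz thm} complements $N$ in $G$, giving $G=N\times H$ and contradicting $G=G'$.

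The main obstacle I foresee is the simple groups analog, and specifically checking that every centralizer $C_G(c)$ can be realized as a $\gamma$-$A$-special subgroup of degree $r-1$. Definition \ref{gammaAspecial} requires that one of the two factors inside each bracket be literally a centralizer $C_G(A_i)$, which is a more rigid constraint than in the $A$-special setting of Section \ref{Aspecial}; so one must verify that brackets of the form $[G,C_G(A_i)]$ still fill out $G$, which follows from the fact that $G$ is a product of nonabelian simple factors and from the Thompson-theorem argument used for the $S_j$-components. Once this step is in place, an easy induction promotes $C_G(c)$ from $\gamma$-$A$-special of degree $1$ to degree $r-1$, and the remainder of the argument is a routine adaptation of the proof of Theorem \ref{generation1}.
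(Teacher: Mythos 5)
Your skeleton is exactly the adaptation the paper intends (it omits the proof of Theorem \ref{gamma_generation1}, saying it parallels Theorem \ref{generation1}), and your treatment of the direct product of simple groups, of the $p'$ and perfect cases for $N$, and of the two subcases when $G=G'$ (via Proposition \ref{gammaPAspecial}(4) and the Frattini/Gasch\"utz argument) is a faithful transcription. The genuine gap is in the case where $N$ is a $p$-group and $G\neq G'$. In the model proof this branch is not handled by quotienting alone: the key point there is the identity $G^{(d+1)}=(G')^{(d)}$, which allows the minimal-counterexample hypothesis to be applied to the proper $A$-invariant subgroup $G'$ with the same degree $d$, giving that the $A$-invariant Sylow $p$-subgroup of $G^{(d+1)}$ already lies in $X$; only after that is it legitimate to pass to $G/G^{(d+1)}$. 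Your proposed reduction ``use induction to pass to a quotient in which $\gamma_r(G)=1$'' has no such justification: minimality applied to $G/\gamma_r(G)$ (or to $G/N$) only yields $P=X\,(P\cap\gamma_r(G))$, and the containment $P\cap\gamma_r(G)\leq X$ is precisely what remains to be proved. There is no lower-central-series analogue of the identity above: $\gamma_r(G)$ is not $\gamma_{r-1}$ of a proper $A$-invariant subgroup (for instance $\gamma_{r-1}(G')\leq\gamma_{2(r-1)}(G)$ is in general strictly smaller than $\gamma_r(G)$), so applying the inductive hypothesis to $G'$ or to $\gamma_{r-1}(G)$ does not cover $\gamma_r(G)$.

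Nor can this branch be skipped, because both halves of your final case genuinely need $G=G'$: Proposition \ref{gammaPAspecial}(4) is stated only for perfect $G$, and the Gasch\"utz step needs $P$ to be a Sylow $p$-subgroup of $G$ itself (i.e.\ $\gamma_{r-1}(G)=G$) and derives its contradiction from $G=G'$. So the reduction to the perfect case must be argued by some device not present in your sketch (for example, proving the generation statement for Sylow subgroups of $\gamma_k(G)$ for all $k$ simultaneously and organizing the induction so that the statement for degree $r$ is available, or finding another way to place $P\cap\gamma_r(G)$ inside $X$). As written, this step is a hole in the proposal, and it is exactly the point where the adaptation of the proof of Theorem \ref{generation1} is least routine; the rest of your argument, including the verification that the more rigid Definition \ref{gammaAspecial} still makes every centralizer $C_G(c)$ a $\gamma$-$A$-special subgroup of all degrees in the semisimple case, is correct.
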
 

From this one can deduce

\begin{theorem}
\label{gamma_PR}
Let  $m$ be a positive integer, $q$ a prime and $A$ an elementary
abelian group of order $q^r$ with $r\ge 2$ acting 
on a finite $q'$-group $G$. If $\gamma_{r-1}(C_G(a))$ has exponent dividing
$m$ for any $a\in A^\#$, then $\gamma_{r-1}(G)$
has $\{m,q,r\}$-bounded exponent.
\end{theorem}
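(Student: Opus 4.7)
The plan is to mirror the proof of Theorem \ref{PR} with $\gamma$-$A$-special subgroups replacing $A$-special ones and $\gamma_{r-1}$ in place of the derived group. First I would reduce, as at the end of the proof of Theorem \ref{PR}, to bounding the exponent of an $A$-invariant Sylow $p$-subgroup $P$ of $\gamma_{r-1}(G)$ for each prime $p$ dividing $|\gamma_{r-1}(G)|$. By Theorem \ref{gamma_generation1} combined with a Lemma \ref{generation_and_product}-type argument, one writes $P=P_1 P_2\cdots P_t$, where each $P_j=P\cap H_j$ for some $\gamma$-$A$-special subgroup $H_j$ of degree $r-1$; by Proposition \ref{gammaPAspecial}(3), $H_j$ lies in $\gamma_{r-1}(C_G(B))$ for a subgroup $B\le A$ with $|A/B|\le q^{r-1}$, hence in $\gamma_{r-1}(C_G(a))$ for some $a\in A^{\#}$. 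Thus every $P_j$ is generated by elements of order dividing $m$, and in particular $p\mid m$. When $G$ itself is a powerful $p$-group, $\gamma_{r-1}(G)$ is also powerful, Proposition \ref{gammaPAspecial}(2) shows that it is generated by such $\gamma$-$A$-special subgroups of degree $r-1$, and the standard fact that a powerful $p$-group generated by elements of order dividing $m$ has exponent dividing $m$ forces $\exp(\gamma_{r-1}(G))\mid m$; this plays the role of Lemma \ref{powerfulth}.

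For a general $p$-group $G$, given $x\in\gamma_{r-1}(G)$, I would write $x=x_1\cdots x_t$ with $x_j\in P_j$, and set $Y=\langle x_1^{A},\dots,x_t^{A}\rangle$ and $Y_j=H_j\cap Y$, so that $Y$ is a $q^{r-1}t$-generator group with generators of order dividing $m$ and $Y=Y_1\cdots Y_t$ with each $Y_j$ of exponent dividing $m$. Put $L=L_p(Y)$ and let $V_j\subseteq L$ be the image of $Y_j$ in $Y/\Phi(Y)$. I would then declare the $V_j$ to be $\gamma$-special subspaces of weight $1$ of $L$ and inductively declare $[W,V_j]\cap C_L(A_l)$ a $\gamma$-special subspace of weight $k$ whenever $W$ is one of weight $k-1$. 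A straightforward induction using Proposition \ref{gammaPAspecial}(1) and (3) shows that every $\gamma$-special subspace of weight $k\le r-1$ corresponds to a subgroup of some $\gamma$-$A$-special subgroup of $G$ contained in $\gamma_k(C_G(B))$ for some $B\le A$ of index at most $q^k$, and hence, by Lemma \ref{Laz}, its elements are ad-nilpotent of index at most $m$.

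The last step is to feed this into the Zelmanov machinery. Extending scalars by a primitive $q$th root of unity to form $\overline L$, I would choose common $A$-eigenvectors $v_1,\dots,v_s\in\overline{V_1}\cup\cdots\cup\overline{V_t}$ generating $\overline L$ with $s\le q^{r-1}t$, and argue as in the proof of Lemma \ref{pgroupth} that any Lie commutator in the $v_i$ remains a common $A$-eigenvector and, by construction, lies in $\overline W$ for some $\gamma$-special subspace $W$; Lemma \ref{L(GH)} then yields ad-nilpotency of $\{m,q\}$-bounded index. Since the generators of $Y$ have order dividing $m$, Proposition \ref{WilZel} gives that $C_L(a)=L_p(Y,C_Y(a))$ is PI, whence by Corollary \ref{polynomialidentity} so is $\overline L$; Theorem \ref{liealgbnilp} then makes $\overline L$, and therefore $L$, nilpotent of $\{m,q,r\}$-bounded class. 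Lemma \ref{PowerfulR} supplies a characteristic powerful subgroup $K\le Y$ of bounded index, the powerful case bounds $\exp(\gamma_{r-1}(K))$, and modding it out reduces $Y$ to $\{m,q,r\}$-bounded derived length; a $\gamma$-$A$-special analog of Corollary \ref{generation4} then expresses each $Y^{(i)}$ as a product of its intersections with the $Y_j$, hence as a group generated by elements of order dividing $m$, and iterating from the top of the derived series gives a bound on $\exp(Y)$ and thereby on the order of $x$. The main obstacle will be verifying that the asymmetric recursion $W\mapsto [W,V_j]\cap C_L(A_l)$ defining $\gamma$-special subspaces is genuinely preserved by Lie commutators of common $A$-eigenvectors; this is the point at which the $\gamma$-argument diverges most from the derived-series argument, though it is also where the \emph{easier} character of part (1) shows up, because only one new factor $V_j$ is appended at each weight rather than the weight being doubled.
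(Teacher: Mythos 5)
Your overall route is exactly the one the paper intends (the paper only sketches this proof, stating that it is the same as that of Theorem \ref{PR} with obvious changes), but as written your Lie-algebra step has a genuine gap, and you flag it yourself without closing it. First, the asymmetric recursion $W\mapsto[W,V_j]\cap C_L(A_l)$ is not preserved under taking Lie commutators of the eigenvectors $v_1,\dots,v_s$: a commutator such as $[[v_1,v_2],[v_3,v_4]]$ has the form $[w_1,w_2]$ with both entries of weight at least $2$, so it need not lie in any subspace produced by your recursion, and the induction ``every commutator in the $v_i$ lies in some $\overline{W}$'' breaks down. Second, your ad-nilpotency claim is not justified as stated: for weight $k<r-1$, knowing only that a subspace corresponds to a subgroup of $\gamma_k(C_G(B))$ gives no bound on orders, since the hypothesis controls only $\gamma_{r-1}(C_G(a))$; moreover, ad-nilpotency of arbitrary elements of $\overline{W}$ (not just images of group elements) requires the Lemma \ref{L(GH)} argument, not Lemma \ref{Laz} alone.

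Both defects disappear if you keep the symmetric definition of special subspaces used in the proof of Lemma \ref{pgroupth}, namely $W=[W_1,W_2]\cap C_L(A_l)$ with $W_1,W_2$ special of smaller weight, and prove by induction on the weight that every special subspace corresponds to a subgroup of some $\gamma$-$A$-special subgroup of $G$ of degree $r-1$. Indeed, if $W_1,W_2$ correspond to subgroups $J_1\le D_1$ and $J_2\le D_2$ with $D_1,D_2$ $\gamma$-$A$-special of degree $r-1$, then $D_2\le C_G(A_i)$ for some $i$, so $[J_1,J_2]\cap C_G(A_l)\le [D_1,C_G(A_i)]\cap C_G(A_l)$, which is $\gamma$-$A$-special of degree $r$ and hence, by Proposition \ref{gammaPAspecial}(1), contained in a $\gamma$-$A$-special subgroup of degree $r-1$; by Proposition \ref{gammaPAspecial}(3) the latter lies in $\gamma_{r-1}(C_G(a))$ for some $a\in A^{\#}$, a subgroup of exponent dividing $m$, so Lemma \ref{L(GH)} applies verbatim as in the proof of Lemma \ref{pgroupth} and yields bounded ad-nilpotency for commutators of every weight, not only weights up to $r-1$. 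The same device (weakening the second factor to a centralizer and invoking Proposition \ref{gammaPAspecial}(1)) is what justifies the $\gamma$-analogues of Lemma \ref{generation2} and Corollary \ref{generation4} that you invoke at the end. With these corrections your argument coincides with the proof the paper has in mind.
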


The above theorem shows that part (1) of Conjecture \ref{255} is correct. The proof of Theorem
\ref{gamma_PR} can be obtained in the same way as that of Theorem \ref{PR} with only obvious changes required. Thus, we omit the further details.


\end{document}